\documentclass[11pt,twoside, a4paper]{amsart}

\usepackage[english]{babel} 
\usepackage[T1]{fontenc} 
\usepackage{amsmath}
\usepackage{amssymb} 
\usepackage{float}
\usepackage{amsfonts}
\usepackage{mathtools}
\usepackage[utf8]{inputenc}
\usepackage[mathcal]{eucal} 
\usepackage{enumerate}
\usepackage{amsthm} 
\usepackage{hyperref}
\usepackage[totalwidth=14cm,totalheight=20cm, hmarginratio=1:1]{geometry}


\newtheorem{defi}{Definition}[section] 

\newtheorem{cor}[defi]{Corollary} 
\newtheorem{lemma}[defi]{Lemma}
\newtheorem{prop}[defi]{Proposition}
\newtheorem{oss}[defi]{Remark}

\newtheorem{bigthm}{Theorem}

\newcommand{\Pp}{\mathbb{P}}
\newcommand{\R}{\mathbb{R}} 

\newcommand{\C}{\mathbb{C}} 
\newcommand{\Z}{\mathbb{Z}}

\newcommand{\h}{\mathbb{H}}

\newcommand{\diag}{\mathrm{diag}}

\newcommand{\SL}{\mathrm{SL}}

\newcommand{\dPSL}{\mathbb{P}SL(2,\mathbb{R})\times \mathbb{P}SL(2,\mathbb{R})}

\newcommand{\PSL}{\mathbb{P}SL}

\newcommand{\Isom}{\mathrm{Isom}}

\newcommand{\Id}{\mathrm{Id}}
\newcommand{\trace}{\mathrm{trace}}

\newcommand{\SO}{\mathrm{SO}}
\newcommand{\Aut}{\mathrm{Aut}}
\newcommand{\graph}{\mathrm{graph}}
\newcommand{\cro}{\mathrm{cr}}
\newcommand{\Sp}{\mathrm{Sp}}
\newcommand{\dive}{\mathrm{div}}
\newcommand{\grad}{\mathrm{grad}}

\DeclareMathAlphabet{\mathpzc}{OT1}{pzc}{m}{it}

\title[Quadratic differentials and light-like polygons]{Polynomial quadratic differentials on the complex plane and light-like polygons in the Einstein Universe}
\author{Andrea Tamburelli}
\date{\today}
\thanks{}

\begin{document}

\begin{abstract} 
We construct geometrically a homeomorphism between the moduli space of polynomial quadratic differentials on the complex plane and light-like polygons in the 2-dimensional Einstein Universe. As an application, we find a class of minimal Lagrangian maps between ideal polygons in the hyperbolic plane.
\end{abstract}

\maketitle
\setcounter{tocdepth}{1}
\tableofcontents

\section*{Introduction} 
\indent A general problem in Teichm\"uller theory consists in finding canonical maps between hyperbolic surfaces. Different possibilities are known when the surfaces are closed: in the isotopic class of the identity, we can find, for instance, the Teichm\"uller map that minimises the quasi-conformal dilatation (\cite{Teichmap}), the harmonic map that minimises the $L^{2}$-energy (\cite{eells}, \cite{Wolf_harmonic}, \cite{Tromba_book}), and the minimal Lagrangian map that realises the minimum of the holomorphic $1$-energy (\cite{oneharmonic}).\\
\indent One aim of this paper is to study minimal Lagrangian maps between ideal polygons in the hyperbolic plane. Recall that a diffeomorphism $m:U \rightarrow V$ between domains in the hyperbolic plane is minimal Lagrangian if it preserves the volume and its graph inside $\h^{2}\times \h^{2}$ is a minimal surface. These maps can equivalently be characterised by the fact that they can be decomposed as $m=f'\circ f^{-1}$, where $f:X \rightarrow U$ and $f':X \rightarrow V$ are harmonic maps from a Riemann surface $X$ with opposite Hopf differentials (\cite{Schoenharmonic}). We will also require that the  metrics $\|\partial f\|^{2}|dz|^{2}$ and $\|\partial f'\|^{2}|dz|^{2}$ induced on $X$ by the harmonic maps are complete. We obtain the following:

\begin{bigthm}\label{thm:A} Given two ideal polygons in the hyperbolic plane with $k \geq 3$ vertices, there exist at most $k$ minimal Lagrangian maps that factor through the complex plane, sending one polygon to the other.
\end{bigthm}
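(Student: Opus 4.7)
My plan is to translate the question into a combinatorial count of light-like polygons via the paper's main homeomorphism. First I would recall that a minimal Lagrangian map $m : U \to V$ factoring through $\C$ is encoded by a pair of complete harmonic maps $f, f' : \C \to \h^{2}$ with opposite Hopf differentials $q$ and $-q$. Since $U$ and $V$ are ideal polygons with $k$ vertices, the classical results of Wan and Au--Wan on complete harmonic maps $\C \to \h^{2}$ with polynomial Hopf differential force $q$ to be a polynomial quadratic differential of degree $k-2$; the asymptotic behaviour of $f$ in each horizontal sector of $q$ recovers one ideal vertex of $U$, and similarly for $f'$ and $V$. Two polynomials $q, q'$ determine the same minimal Lagrangian map precisely when they differ by the affine action of $\Aff(\C)$ on the source, so the set of minimal Lagrangian maps we wish to count embeds into the moduli space of polynomial quadratic differentials of degree $k-2$.

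Next, I would apply the paper's main theorem to identify this moduli space with the space of light-like polygons in the $2$-dimensional Einstein Universe with $2k$ vertices. The two light-like rulings of $\mathrm{Ein}^{1,1}$ induce two natural projections of such a polygon onto $\Ss^{1}$, and I expect — by tracing through the geometric construction underlying the main theorem — that these projections recover the vertices of $U$ and $V$ respectively on $\partial \h^{2} \cong \Ss^{1}$, the two rulings corresponding to the two harmonic maps $f$ and $f'$. The problem therefore reduces to the following: given two cyclically ordered $k$-tuples of points on the two circles of rulings, how many light-like polygons project to them?

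A light-like polygon alternates edges between the two rulings, using each of the prescribed $2k$ leaves exactly once, so it is specified by a cyclic bijection between the two $k$-element sets of leaves. The requirement that the polygon close up as an embedded curve in $\mathrm{Ein}^{1,1}$ forces this bijection to preserve the two cyclic orders, leaving exactly the $k$ cyclic rotations as admissible. Combined with the previous steps this produces the bound of at most $k$ minimal Lagrangian maps. The main obstacle will be this last step: verifying the cyclic monotonicity of admissible bijections requires a precise understanding of how the embedded/closed condition on a light-like polygon interacts with the two rulings, and will most likely lean on the explicit geometric construction underlying the main theorem (and on the convexity properties of the critical graph of the polynomial $q$) rather than on pure combinatorics.
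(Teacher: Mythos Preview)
Your proposal is correct and follows essentially the same route as the paper: minimal Lagrangian maps factoring through $\C$ are encoded by polynomial quadratic differentials of degree $k-2$ (this is the paper's Proposition~\ref{prop:unique}, which invokes the same Han--Tam--Treibergs--Wan results you cite), the main homeomorphism translates these into light-like $2k$-gons, and the two ruling projections recover the vertex sets of the two ideal polygons (this is Proposition~\ref{prop:max_poly}); the count then reduces to counting light-like polygons with prescribed left and right projections.

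The one place where you anticipate difficulty is in fact already handled earlier in the paper and requires no further work with $q$. The description $\mathcal{MLP}_{2k}\cong(\mathcal{TP}_{k}\times\mathcal{TP}_{k})/\Z_{k}$ in Proposition~\ref{prop:moduli_polygo}---which rests only on the elementary fact that a light-like polygon is the graph of an orientation-preserving, upper-semicontinuous, piecewise-constant map $f_{\Delta}:\R\Pp^{1}\to\R\Pp^{1}$---already encodes the cyclic-monotonicity constraint you are worried about. The paper then simply reads off the bound $k$ as the cardinality of a fibre of the forgetful map $(\mathcal{TP}_{k}\times\mathcal{TP}_{k})/\Z_{k}\to\mathcal{TP}_{k}/\Z_{k}\times\mathcal{TP}_{k}/\Z_{k}$, with no appeal to the critical graph of $q$ or to any further asymptotic analysis.
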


\indent Notice that the condition on the number of vertices is necessary for the existence of a map that preserves the volume. The assumption on the completeness of the metrics on $\C$ is technical and might be removed provided every harmonic diffeomorphism from the complex plane to an ideal polygon has polynomial Hopf differential. Here we consider two minimal Lagrangian maps to be different if one cannot be obtained from the other by pre- or post-composition by a global isometry of the hyperbolic plane. The different minimal Lagrangian maps correspond to different couplings between vertices of the polygon in the domain and edges of the polygon in the target: in fact, the minimal Lagrangian maps that we find cannot be extended to the boundary of the polygon, as they send a neighbourhood of each ideal vertex in the domain to a neighbourhood of an edge in the target, but one can prescribe to which edge a vertex is associated. This produces at most $k$ minimal Lagrangian maps, because if any of the two polygons has some symmetries, these maps can be obtained by pre- or post-composing by a global isometry. Therefore, the minimal Lagrangian maps that we describe behave very differently compared to the ones that Brendle found between smooth domains with strictly convex boundary, as in that case it is possible to choose the image of an arbitrary point of the boundary. (\cite{brendle2008}). \\
\\
\indent Theorem \ref{thm:A} will be proved using tools coming from anti-de Sitter geometry. In fact, minimal Lagrangian maps between domains in the hyperbolic plane are intimately related to maximal space-like surfaces in the three-dimensional anti-de Sitter space with given boundary at infinity. See for instance \cite{bon_schl} and \cite{seppimaximal}. (See also \cite{volumeAdS}, \cite{entropy}, \cite{regularAdS}, \cite{wildAdS}, \cite{degeneration} for applications.) It turns out that minimal Lagrangian maps between ideal polygons in the hyperbolic plane factoring through the complex plane correspond to maximal surfaces in anti-de Sitter space bounding a light-like polygon at infinity, i.e. a topological circle consisting of a finite number of light-like segments. Recall, namely, that the boundary at infinity of anti-de Sitter space is naturally endowed with a conformally flat Lorentzian structure and is a model for the $2$-dimensional Einstein Universe $Ein^{1,1}$. We prove the following:

\begin{bigthm}\label{thm:B} Given a light-like polygon $\Delta \subset Ein^{1,1}$, there exists a unique maximal surface with boundary at infinity $\Delta$.
\end{bigthm}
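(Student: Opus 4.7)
The plan is to prove the theorem by reducing it to the existence and uniqueness of harmonic diffeomorphisms from $\C$ onto ideal polygons in $\h^{2}$ with prescribed polynomial Hopf differential, via the well-known correspondence between maximal space-like surfaces in three-dimensional anti-de Sitter space and pairs of harmonic maps $\C \to \h^{2}$ with opposite Hopf differentials. The first step is to extract combinatorial data from $\Delta$: using the two rulings of $Ein^{1,1}$ by light-like lines, the polygon $\Delta$ projects to two cyclic sequences of points on $\R\Pp^{1}$, yielding a pair of ideal polygons $P_{l}, P_{r} \subset \h^{2}$ together with a prescribed identification between vertices of one and edges of the other.

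For existence, I would associate to this combinatorial data a polynomial quadratic differential $q$ on $\C$, chosen so that the horizontal trajectory structure of $q$ partitions $\C$ into sectors whose asymptotic behavior matches the cyclic order of the vertices of $P_{l}$ and $P_{r}$. By the results of Han--Tam--Treibergs--Wan on harmonic diffeomorphisms from the plane with polynomial Hopf differential, there exist harmonic diffeomorphisms $f \colon \C \to P_{l}$ and $f' \colon \C \to P_{r}$ with Hopf differentials $q$ and $-q$ respectively, and whose induced metrics $\|\partial f\|^{2}|dz|^{2}$ and $\|\partial f'\|^{2}|dz|^{2}$ are complete. The pair $(f, f')$ then produces, via the Bonsante--Schlenker correspondence, a maximal space-like surface $\Sigma \subset \mathrm{AdS}^{3}$ conformal to $\C$. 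The main technical point is to verify that $\partial_{\infty}\Sigma = \Delta$: this reduces to computing, in the natural parameter of $q$ on each sector at infinity, the two limits of the projections $\Sigma \to \h^{2}$ and showing that they trace successive light-like segments meeting at the expected vertex of $\Delta$.

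For uniqueness, any maximal space-like surface $\Sigma'$ with $\partial_{\infty}\Sigma' = \Delta$ is simply connected; I would first argue that it must be conformal to $\C$ rather than to the disk, using that a piecewise light-like boundary forces the second fundamental form of $\Sigma'$ to have polynomial, not exponential, growth. Projecting $\Sigma'$ to the two hyperbolic factors then yields a pair of harmonic diffeomorphisms $\C \to \h^{2}$ with opposite Hopf differentials, complete induced metrics, and boundary behavior dictated by $\Delta$. Completeness together with the light-like asymptotics should force the common Hopf differential to be polynomial with prescribed trajectory combinatorics, at which point the uniqueness part of Han--Tam--Treibergs--Wan identifies $\Sigma'$ with the surface constructed above.

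The main obstacle lies in the asymptotic analysis needed in both directions: converting the growth and trajectory data of a polynomial quadratic differential on $\C$ into the combinatorial and metric structure of a light-like polygon in $Ein^{1,1}$, and conversely rigidifying an arbitrary maximal surface with piecewise light-like boundary to force its Hopf differential to be polynomial of the prescribed type. This matching of asymptotics is the geometric core of the homeomorphism between polynomial quadratic differentials and light-like polygons announced in the abstract, and it is what will ultimately allow one to descend the existence and uniqueness for $\Sigma$ from the harmonic map side.
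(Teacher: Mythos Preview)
Your proposal has genuine gaps in both directions, and the paper's approach is structurally different.

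\textbf{Existence.} You write that you will ``associate to this combinatorial data a polynomial quadratic differential $q$ on $\C$, chosen so that the horizontal trajectory structure of $q$ \ldots matches the cyclic order of the vertices.'' But the cyclic order and vertex-edge pairing only determine the \emph{degree} of $q$; the moduli space $\mathcal{MQ}_{d}$ has real dimension $2(d-1)$, matching that of $\mathcal{MLP}_{2(d+2)}$. Han--Tam--Treibergs--Wan tells you that a given $q$ produces harmonic diffeomorphisms onto \emph{some} ideal polygons $P_{l}(q)$ and $P_{r}(q)$, but you have no mechanism for choosing $q$ so that $P_{l}(q)$ and $P_{r}(q)$ are simultaneously the \emph{prescribed} pair coming from $\Delta$. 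Solving this is exactly the surjectivity of the map $\alpha:\mathcal{MQ}_{d}\to\mathcal{MLP}_{2(d+2)}$, which the paper obtains not by any direct construction but by a topological argument: $\tilde\alpha$ is continuous and injective, hence open by invariance of domain, and $\alpha$ is proper (via a compactness argument for sequences of maximal surfaces bounding convergent polygons), hence closed; surjectivity follows. Your proposal contains no substitute for this step.

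\textbf{Uniqueness.} Your plan requires first proving that any maximal surface $\Sigma'$ with $\partial_{\infty}\Sigma'=\Delta$ is conformally $\C$ with polynomial Hopf differential. The heuristic ``a piecewise light-like boundary forces the second fundamental form to have polynomial, not exponential, growth'' is not a proof, and making it one is essentially as hard as the theorem itself. The paper bypasses this entirely: Lemma~\ref{lm:unique_max} proves uniqueness by a direct maximum-principle argument on the function $B(u,v)=\langle u,v\rangle$ restricted to $\widehat{S}\times\widehat{S'}$, using the Omori--Yau trick at infinity (renormalising by isometries along a maximising sequence and extracting limits). This argument is indifferent to conformal type and to the nature of the Hopf differential, and it is what makes the injectivity of $\alpha$ (hence also the invariance-of-domain step above) go through.

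In short, the paper's logic runs: uniqueness (geometric maximum principle) $\Rightarrow$ injectivity of $\alpha$ $\Rightarrow$ (with continuity and properness) $\alpha$ is a homeomorphism $\Rightarrow$ existence. Your proposal attempts the opposite order and, for existence, presupposes the surjectivity that is the actual content.
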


\indent This extends previous results about existence and uniqueness of maximal surfaces with given boundary at infinity (see for instance \cite{bon_schl}, \cite{bbsads}, and \cite{TambuCMC} for a generalisation to constant mean curvature surfaces), and we believe it may have an independent interest. These surfaces have a special feature: they are conformally equivalent to the complex plane. To the extend of our knowledge, these are the first such examples, if we exclude the trivial case of the horospherical surface described in \cite{bon_schl} and \cite{seppimaximal}. By associating to every such surface, the holomorphic quadratic differential that determines its second fundamental form, we obtain the following:

\begin{bigthm}\label{thm:C} There is a homeomorphism between the moduli space of polynomial quadratic differentials on the complex plane and the moduli space of light-like polygons in the Einstein Universe.
\end{bigthm}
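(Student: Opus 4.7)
My approach is to construct the homeomorphism by exhibiting two mutually inverse continuous maps. In one direction, I associate to each light-like polygon $\Delta \subset Ein^{1,1}$ the holomorphic quadratic differential $q_\Delta$ on $\C$ recording the second fundamental form of the unique maximal surface provided by Theorem~\ref{thm:B}, which is conformally equivalent to $\C$. In the opposite direction, given a polynomial quadratic differential $q$ on $\C$, I solve the associated Wang equation for the conformal factor of a maximal immersion into $AdS^3$ and read off its boundary at infinity. The two moduli spaces are taken modulo their natural symmetry groups: light-like polygons up to isometries of $Ein^{1,1}$, and polynomial quadratic differentials up to affine changes of the coordinate on $\C$; the latter group is precisely the stabilizer of the uniformization and thus encodes the ambiguity in the choice of a conformal isomorphism $S \cong \C$.

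For the forward map, Theorem~\ref{thm:B} directly produces a maximal surface $S$ with $\partial_\infty S = \Delta$, and by the discussion preceding the statement above the intrinsic conformal structure of $S$ is $\C$. Since the second fundamental form of a maximal space-like surface is trace-free with respect to the induced metric, it is the real part of a holomorphic quadratic differential $q_\Delta$ on $\C$. The key point is to establish that $q_\Delta$ is polynomial, of degree exactly $k-2$ where $k$ is the number of vertices of $\Delta$. The plan is to analyse the asymptotic geometry of $S$ near each vertex of $\Delta$: locally around a vertex, the asymptotic boundary $\Delta$ consists of two light-like segments meeting at a point, and a neighbourhood of the corresponding end of $S$ should be modelled on a horospherical-type surface whose intrinsic conformal type is that of a half-plane end of $\C$. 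Coupled with estimates on the growth of the conformal factor of the induced metric on $S$, this pins down the growth of $|q_\Delta|$ at infinity and yields polynomiality with the predicted degree.

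For the inverse map, given a polynomial $q = p(z)\,dz^2$ with $\deg p = k-2$, I look for a conformal factor $e^{2u}|dz|^2$ on $\C$ satisfying the Gauss equation for a maximal surface in $AdS^3$ with Hopf differential $q$:
\[
\Delta u \;=\; 2\, e^{2u} \,-\, 2\, |p(z)|^2\, e^{-2u}.
\]
Existence of a solution with the correct asymptotic behaviour can be obtained by a sub- and super-solution argument in the spirit of the references cited for Theorem~\ref{thm:B}, using the flat model metric $|p(z)|\,|dz|^2$ as a comparison far from the zeroes of $p$. Integrating the associated structure equations yields a complete maximal immersion of $\C$ into $AdS^3$, and the asymptotic boundary of its image is controlled by the horizontal foliation of $q$: at infinity this foliation partitions $\C$ into $\deg p + 2 = k$ sectors, each of which maps asymptotically to a single light-like segment of $\partial_\infty S$, with the transitions between consecutive sectors producing the $k$ vertices.

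The main obstacle is the polynomiality statement in the forward direction: showing that $q_\Delta$ has no essential singularity at infinity and has exactly the expected degree. This requires sharp asymptotic estimates on the conformal factor of the maximal surface near each vertex, which in turn depend on precise control of the horospherical-type asymptotic model. Once this is in hand, continuity of both maps, and hence the homeomorphism statement, follows from standard elliptic regularity for the Wang equation, continuity of the asymptotic-boundary map for maximal surfaces under perturbations of the Hopf differential, and the fact that the two maps are mutually inverse thanks to the uniqueness clauses of Theorem~\ref{thm:B} and of the solution of the Wang equation with the prescribed asymptotic profile.
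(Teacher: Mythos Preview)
Your proposal has a genuine structural gap: it relies on Theorem~\ref{thm:B} as an input, but in the paper Theorem~\ref{thm:B} is \emph{deduced from} the proof of Theorem~\ref{thm:C}. Concretely, the existence half of Theorem~\ref{thm:B} is obtained only after one knows that the map $\alpha:\mathcal{MQ}_{k}\to\mathcal{MLP}_{2(k+2)}$ is surjective; surjectivity is in turn established by the continuity--injectivity--properness argument that constitutes the proof of Theorem~\ref{thm:C}. So invoking Theorem~\ref{thm:B} to build the inverse map $\Delta\mapsto q_{\Delta}$ is circular within the paper's logical framework. Relatedly, your claim that the maximal surface bounding an arbitrary light-like polygon is conformally $\C$ is not available a priori either---the paper only knows this for surfaces that arise from the forward construction, and again it becomes true for \emph{all} such surfaces only once surjectivity of $\alpha$ is proved.

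Even setting the circularity aside, the step you flag as the ``main obstacle''---showing that $q_{\Delta}$ is polynomial of the correct degree---is not actually addressed in your proposal beyond a vague plan about asymptotic models near vertices. The paper never attempts this direction at all; it bypasses the problem entirely. Its strategy is to work only with the forward map $\alpha$ (polynomial differential $\to$ maximal surface $\to$ light-like polygon), prove that $\alpha$ lifts to a continuous injective map $\tilde{\alpha}$ between manifolds of the same dimension (continuity via elliptic estimates for the vortex equation, injectivity via the uniqueness lemma for maximal surfaces with a given boundary), invoke Invariance of Domain to get openness, prove properness via compactness estimates for sequences of maximal surfaces with converging boundary polygons, and conclude that $\alpha$ is a homeomorphism onto a closed and open subset of a connected space, hence onto all of $\mathcal{MLP}_{2(k+2)}$. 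This is what lets one avoid ever analysing the second fundamental form of a maximal surface produced from an arbitrary polygon. Incidentally, your degree count is off: a polygon with $2(k+2)$ vertices corresponds to a polynomial of degree $k$, not $k-2$.
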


\indent This can be thought of as analogous to the homeomorphism between the moduli space of polynomial cubic differentials on the complex plane and convex polygons in the real projective plane, found in \cite{DW}. As in the aforementioned paper, this result has an interpretation in terms of Higgs bundles with wild ramifications: compactifying $\C$ with $\C\Pp^{1}$, we can see a holomorphic quadratic differential $q$ as a meromorphic quadratic differential on $\C\Pp^{1}$ with a pole of order at least $2$ at infinity. From these data, we can construct a parabolic $\dPSL$-Higgs bundle on $\C\Pp^{1}$ with Higgs field (determined by $q$) carrying an irregular singularity at infinity. The solution of Hitchin's equation in this context (\cite{BB_wild}) produces a minimal surface in $\h^{2} \times \h^{2}$, which is the image of the maximal surface with holomorphic quadratic differential $q$ provided by Theorem \ref{thm:B} via the Gauss map. Our theorem thus describes the geometry of these surfaces, as being asymptotic to a finite number of flats in $\h^{2}\times \h^{2}$, the number being determined by the degree of the polynomial. In an upcoming joint work with Mike Wolf (\cite{TW}), we extend this picture to $\Sp(4,\R)$-Higgs bundles with wild ramifications, in which case the role of the quadratic differential is played by a quartic differential.

\subsection*{Outline of the paper} In Section \ref{sec:polyno} we recall some basic facts about polynomial quadratic differentials on the complex plane. In Section \ref{sec:polygo} we describe the geometry of the moduli space of light-like polygons in the Einstein Universe. We then relate these two moduli spaces via maximal surfaces in anti-de Sitter space in Section \ref{sec:from_polyno}. Section \ref{sec:mainthm} is devoted to the proofs of Theorem \ref{thm:B} and \ref{thm:C}. Theorem \ref{thm:A} is proved in Section \ref{sec:application}.

\subsection*{Acknowledgement}
This project was inspired by a collaboration with Mike Wolf on a similar subject. I would like to thank him for the many interesting conversations and his hospitality. The author acknowledges support from U.S. National Science Foundation grants DMS 1107452, 1107263, 1107367 "RNMS: GEometric structures And Representation varieties"(the GEAR Network).

\section{Polynomial quadratic differentials}\label{sec:polyno}
A polynomial quadratic differential is a holomorphic differential on the complex plane of the form $p(z)dz^{2}$, where $p(z)$ is a polynomial function. 

\subsection{The moduli space} We denote with $\mathcal{Q}_{d}$ the space of polynomial quadratic differentials of degree $d$. The group $\Aut(\C)$ of biholomorphisms of $\C$ acts on this space by push-forward. Let $\mathcal{MQ}_{d}$ be the quotient of $\mathcal{Q}_{d}$ by this action. The geometry of the resulting moduli space is analogous to that described for polynomial cubic differentials in \cite{DW}.

\begin{prop}\label{prop:moduli_poly}The moduli space $\mathcal{MQ}_{d}$ is a complex orbifold of real dimension $2(d-1)$ if $d\geq 1$.
\end{prop}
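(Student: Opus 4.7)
The plan is to exhibit $\mathcal{MQ}_d$ explicitly as the quotient of a complex manifold by a finite group, which automatically gives the complex orbifold structure. I would identify $\mathcal{Q}_{d}$ with the open subset $\{(a_{0},\dots,a_{d})\in \C^{d+1} : a_{d}\neq 0\}$ via $p(z)=\sum_{k=0}^{d}a_{k}z^{k}$, so that $\mathcal{Q}_{d}$ is a complex manifold of complex dimension $d+1$. Then I would decompose the action of $\Aut(\C)=\C\rtimes\C^{*}$ into its translation and dilation parts and quotient by each in turn.

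First I would analyze the translation subgroup $\{z\mapsto z+\beta\}$. A direct binomial expansion shows that $\beta$ sends the coefficient of $z^{d-1}$ from $a_{d-1}$ to $a_{d-1}-d\, a_{d}\beta$; since $a_{d}\neq 0$, there is a unique $\beta$ making this vanish, and the translations act freely on $\mathcal{Q}_{d}$. The slice $\{a_{d-1}=0\}$ is therefore a global transverse section, and the quotient by translations is the complex manifold $\{a_{d-1}=0,\ a_{d}\neq 0\}\cong \C^{d-1}\times \C^{*}$ of complex dimension $d$.

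Next I would analyze the dilations $\phi_{\alpha}(z)=\alpha z$. A direct calculation of $\phi_{\alpha*}(p(z)dz^{2})=\alpha^{-2}p(w/\alpha)dw^{2}$ gives the weighted action
\[
a_{k}\longmapsto \alpha^{-(k+2)}a_{k},\qquad k=0,\dots,d,
\]
which preserves the slice $\{a_{d-1}=0\}$. Normalizing $a_{d}=1$ requires solving $\alpha^{d+2}=a_{d}$, which has $d+2$ solutions, so dilations act on $\{a_{d-1}=0,\ a_{d}\neq 0\}$ with constant finite stabilizer $\mu_{d+2}\cong \Z/(d+2)$ (the $(d+2)$-th roots of unity acting on $a_{d}$ trivially). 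Taking the further slice $\{a_{d-1}=0,\ a_{d}=1\}\cong \C^{d-1}$ with coordinates $(a_{0},\dots,a_{d-2})$, the residual symmetry is exactly $\mu_{d+2}$ acting linearly with weights $(2,3,\dots,d)$.

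Putting these two steps together yields a biholomorphism
\[
\mathcal{MQ}_{d}\;\cong\; \C^{d-1}\big/\mu_{d+2},
\]
which as the quotient of a complex manifold by a finite group of biholomorphisms is canonically a complex orbifold of complex dimension $d-1$, i.e. real dimension $2(d-1)$. There is no serious obstacle: the only subtlety is to check that the translation subgroup acts freely (ensuring the first quotient is a genuine manifold so that the orbifold behavior is concentrated in the dilation step) and to correctly track the weights of $\mu_{d+2}$, which determine the location and type of the orbifold points (in particular, the origin $[a_{0}=\dots=a_{d-2}=0]$ corresponding to $p(z)=z^{d}$ is the most singular point, with full isotropy $\mu_{d+2}$).
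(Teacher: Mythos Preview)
Your proposal is correct and follows essentially the same approach as the paper: normalize first by a translation to kill $a_{d-1}$ (centered), then by a dilation to set $a_{d}=1$ (monic), leaving a residual $\Z_{d+2}$-action on the slice $\C^{d-1}$. The only differences are expository---you quotient in two explicit stages and record the weights of the $\mu_{d+2}$-action, while the paper does both normalizations at once---but the argument is the same.
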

\begin{proof}Every polynomial quadratic differential can be written as
\[
	q=(a_{d}z^{d}+a_{d-1}z^{d-1}+\cdots +a_{0})dz^{2} \ ,
\]
for some $a_{i} \in \C$ and $a_{d} \in \C^{*}$. An element $T(z)=bz+c \in \Aut(\C)$ acts on $q$ via
\[
	T_{*}q=(a_{d}b^{d+2}(z+c/b)^{d}+a_{d-1}b^{d+1}(z+c/b)^{d-1}+\cdots +b^{2}a_{0})dz^{2} \ ,
\]
hence by choosing $b=a_{d}^{-1/(d+2)}$ we can make it monic (i.e. with leading coefficient equal to $1$) and a suitable choice of the translation component $c$ can make it centered (i.e. with $a_{d-1}=0$). Notice that these choices are unique up to multiplying $b$ by a $(d+2)$-root of unity. Thus we can describe the moduli space as the quotient
\[
	\mathcal{MQ}_{d}=\mathcal{TQ}_{d}/\Z_{d+2}
\]
where $\mathcal{TQ}_{d}$ is the space of monic polynomial of degree $d$ whose roots sum to $0$ and $\Z_{d+2}$ denotes the cyclic group of order $d+2$ generated by $T(z)=\zeta_{d+2}z$, for a primitive $(d+2)$-root of unity $\zeta_{d+2}$. Since $\mathcal{TQ}_{d}$ is naturally identified with $\C^{d-1}$ by
\begin{align*}
	&\mathcal{TQ}_{d} \rightarrow \C^{d-1}\\
	(z^{d}+a_{d-2}&z^{d-2}+\cdots+a_{0}) \mapsto (a_{d-2}, \cdots, a_{0}) \ ,
\end{align*}
it follows that $\mathcal{MQ}_{d}$ is a complex orbifold of real dimension $2(d-1)$.
\end{proof}
    
\begin{oss}If $d=0$, the space $\mathcal{MQ}_{0}$ consists of only one point, represented by the quadratic differential $q=dz^{2}$.
\end{oss}

\noindent We put on $\mathcal{MQ}_{d}$ the topology induced by the identification 
\[
    \mathcal{MQ}_{d}\cong \R^{2(d-1)}/\Z_{d+2}
\]
found in Proposition \ref{prop:moduli_poly}. The following remark will be useful in the rest of the paper:
\begin{prop}\label{prop:poly_top} Let $[q_{n}] \in \mathcal{MQ}_{d}$ be a sequence of polynomial quadratic differentials. The following facts are equivalent:
\begin{enumerate}
    \item [i)] $[q_{n}]$ converges to $[q]$ in $\mathcal{MQ}_{d}$;
    \item [ii)] there exists a sequence $A_{n}$ of biholomorphisms of $\C$ such that $(A_{n})_{*}q_{n}$ converges uniformly on compact sets to $q$.
\end{enumerate}
\end{prop}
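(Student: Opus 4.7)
The plan is to translate both convergence statements into convergence of the coefficients of centred monic representatives, using the identification $\mathcal{MQ}_{d} \cong \C^{d-1}/\Z_{d+2}$ established in Proposition~\ref{prop:moduli_poly}.

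For the implication (ii) $\Rightarrow$ (i), I would first observe that $\Aut(\C)$ preserves the degree, so each $(A_{n})_{*}q_{n}$ and $q$ are polynomial quadratic differentials of degree exactly $d$. Uniform convergence on compact subsets of $\C$ for polynomials of bounded degree forces coefficient convergence, because all norms on the finite-dimensional space of polynomials of degree $\leq d$ are equivalent. Then I would apply the normalisation procedure from the proof of Proposition~\ref{prop:moduli_poly} to bring $(A_{n})_{*}q_{n}$ and $q$ into centred monic form via affine maps $C_{n}$ and $C$. Since this normalisation is well-defined only up to the choice of a $(d+2)$-th root of the leading coefficient, it descends to a continuous map into the quotient $\C^{d-1}/\Z_{d+2}$. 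Therefore coefficient convergence descends to convergence of the $\Z_{d+2}$-orbits, and since $[(A_{n})_{*}q_{n}] = [q_{n}]$ in $\mathcal{MQ}_{d}$, this gives $[q_{n}] \to [q]$.

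For the converse (i) $\Rightarrow$ (ii), I would use the general fact that convergence in a quotient by a finite group is equivalent to the existence of representatives converging in the total space. Fixing centred monic representatives $\tilde{q} = C_{*}q$ of $[q]$ and $\tilde{q}_{n} = (C_{n})_{*}q_{n}$ of $[q_{n}]$, there exist $\zeta_{n} \in \Z_{d+2}$ such that $(\zeta_{n})_{*}\tilde{q}_{n} \to \tilde{q}$ in $\C^{d-1}$, i.e.\ with convergence of every coefficient. Coefficient convergence implies uniform convergence on compact subsets of $\C$ for the associated polynomial functions, and since pushforward under the fixed affine map $C^{-1}$ preserves local uniform convergence of polynomial quadratic differentials, the composition $A_{n} := C^{-1} \circ \zeta_{n} \circ C_{n} \in \Aut(\C)$ satisfies $(A_{n})_{*}q_{n} \to q$ uniformly on compact subsets of $\C$, as required.

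The main technical point to justify is the continuity of the map "polynomial quadratic differential of degree $d$ $\longmapsto$ $\Z_{d+2}$-orbit of its centred monic representative in $\C^{d-1}$". This boils down to the continuous dependence of the unordered set of $(d+2)$-th roots of a non-vanishing complex number on that number, together with the fact that, once $b$ is chosen, the translation parameter $c$ that removes the sub-leading coefficient is a continuous function of the coefficients. Both assertions are elementary, but they are the reason we must pass to the quotient $\C^{d-1}/\Z_{d+2}$ rather than to $\C^{d-1}$ itself.
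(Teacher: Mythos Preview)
Your proposal is correct and follows essentially the same approach as the paper. The only stylistic differences are that the paper extracts coefficients by evaluating the sequence and its derivatives at $0$ (rather than invoking equivalence of norms on a finite-dimensional space), and it phrases the continuity of the normalisation more concretely by writing $B_{n}(z)=b_{n}z+c_{n}$, $B(z)=bz+c$ and arguing that $b_{n}\to b$, $c_{n}\to c$ once a $(d+2)$-th root of unity is fixed; but these amount to the same argument you give.
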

\begin{proof}For the first part of the proof we suppose that $q_{n}$ are monic and centered representatives of $[q_{n}]$. If $[q_{n}]$ converges to $[q]$ in $\mathcal{MQ}_{d}$, then, denoting with $T(z)=\zeta_{d+2}z$ the generator of the $\Z_{d+2}$-action, the coefficients of $T_{*}^{j_{n}}q_{n}$ converge to the coefficients of $q$ for some $j_{n} \in \{1, \dots, d+2\}$. This clearly implies that $T_{*}^{j_{n}}q_{n}$ converges to $q$ uniformly on compact sets.\\
\indent Viceversa, suppose that $(A_{n})_{*}q_{n}$ converges to $q$ uniformly on compact sets. Then, since $(A_{n})_{*}q_{n}$ is a sequence of holomorphic polynomials, it actually converges analytically. In particular, evaluating the sequence and its derivatives at zero, we deduce that the coefficients of $(A_{n})_{*}q_{n}$ must converge to the coefficients of $q$. Let $B_{n}, B \in \Aut(\C)$ be biholomorphisms of $\C$ such that $(B_{n}A_{n})_{*}q_{n}$ and $B_{*}q$ are monic and centered. In the proof of Proposition \ref{prop:moduli_poly}, it is shown that the linear and the translation parts of $B_{n}$ and $B$ can be written explicitly in terms of the coefficients of the polynomials, with the only ambiguity given by the choice of a $(d+2)$-root of unity. Hence, after fixing such a root of unity, we can conclude that if we write $B_{n}(z)=b_{n}z+c_{n}$ and $B=bz+c$, then $b_{n} \to b$ and $c_{n}\to c$. Therefore, since the action of biholomorphisms of $\C$ on the coefficients of polynomial quadratic differentials is continuous, the coefficients of the monic and centered representatives $(A_{n}B_{n})_{*}q_{n}$ and $B_{*}q$ converge.
\end{proof}

\subsection{Half-planes and rays} A natural coordinate for a quadratic differential $q$ is a local coordinate $w$ on a open subset of $\C$ in which $q=dw^{2}$. Such a coordinate always exists locally away from the zeros of $q$, because near such a point we can choose a holomorphic root of $q$ and define
\[
	w(z)=\int_{z_{0}}^{z} \sqrt{q} \ .
\]
Any two natural coordinates for $q$ differ by a multiplication by $-1$ and an additive constant. The metric $|q|$ defines a flat structure on $\C$ with singularities at the zeros of $q$: a zero of order $k$ corresponds to a cone point of angle $\pi(k+2)$. \\
\\
\indent We can see $q$ as a meromorphic quadratic differential on the Riemann sphere $\C\Pp^{1}=\C\cup\{\infty\}$: at the point at infinity $q$ carries a pole singularity of order at least $2$. A natural set of coordinates for $q$ in a neighbourhood of infinity is described in \cite[Section 10.4]{Strebel}. We recall it briefly here.\\
\indent A $q$-half-plane is a pair $(U,w)$ where $U\subset \C$ is open and $w$ is a natural coordinate for $q$ that maps $U$ diffeomorphically to the right half-plane $\{\Re(w)>0\}$. Given any monic polynomial quadratic differential $q$ of degree $d\geq 1$, it is possible to find a system of $(d+2)$ coordinate charts $\{(U_{i}, w_{i})\}_{i=1, \dots, d+2}$ with the following properties:
\begin{itemize}
	\item [i)] the complement of $\bigcup \overline{U_{i}}$ is pre-compact;
	\item [ii)] each $(U_{i},w_{i})$ is a $q$-half-plane;
	\item [iii)] $w_{i}(\overline{U_{i}} \cap \overline{U_{i+1}})$ and $w_{i+1}(\overline{U_{i}} \cap \overline{U_{i+1}})$ are half-lines contained in $i\R$;
	\item [iv)] $\overline{U_{i}}\cap \overline{U_{j}}=\emptyset$ if $i\ne j\pm 1$ \ .
\end{itemize}

\indent We will also make use of the following terminology. A path in $\C$ whose image in a natural coordinate for $q$ is a Euclidean ray with angle $\theta$ will be called $q$-ray with angle $\theta$. This means that in a natural coordinate, a $q$-ray is of the form $\gamma(t)=b+e^{i\theta}t$. We will call $b$ the height of the ray. Similarly a $q$-quasi-ray with angle $\theta$ is a path that can be parameterised in a natural coordinate as $\gamma(t)=e^{i\theta}t+o(t)$ as $t \to +\infty$.

\section{Light-like polygons in the Einstein Universe}\label{sec:polygo}
The $2$-dimensional Einstein Universe is topologically a torus endowed with a conformally flat Lorentzian structure. We concretely realise it as the conformal boundary at infinity of the $3$-dimensional anti-de Sitter space.

\subsection{The Einstein Universe} Consider the vector space $\R^{4}$ endowed with the bilinear form of signature $(2,2)$
\[
	\langle x, y \rangle= x_{0}y_{0}+x_{1}y_{1}-x_{2}y_{2}-x_{3}y_{3} \ .
\]
We denote 
\[
	\widehat{AdS}_{3}=\{x \in \R^{4} \ | \ \langle x,x\rangle=-1\} \ .
\]
It can be easily verified that $\widehat{AdS}_{3}$ is diffeomorphic to a solid torus and the restriction of the bilinear form to the tangent space at each point endows $\widehat{AdS}_{3}$ with a Lorentzian metric of constant sectional curvature $-1$. Anti-de Sitter space is then defined as
\[
	AdS_{3}=\Pp(\{x \in \R^{4}\ | \ \langle x ,x \rangle<0\}) \subset \R\Pp^{3} \ .
\]
The natural map $\pi:\widehat{AdS}_{3} \rightarrow AdS_{3}$ is a two-sheeted covering and we endow $AdS_{3}$ with the induced Lorentzian structure. The isometry group of $AdS_{3}$ that preserves the orientation and time-orientation is $\SO_{0}(2,2)$, the connected component of the identity of the group of linear transformations that preserve the bilinear form of signature $(2,2)$.\\
\\
\indent The boundary at infinity of anti-de Sitter space is naturally identified with 
\[
	\partial_{\infty}AdS_{3}=\Pp(\{ x \in \R^{4} \ | \ \langle x,x\rangle =0\}) \ .
\]
It coincides with the image of the Segre embedding $s:\R\Pp^{1}\times \R\Pp^{1} \rightarrow \R\Pp^{3}$, and thus, it is foliated by two families of projective lines, which we distinguish by calling $s(\R\Pp^{1} \times \{*\})$ the right-foliation and $s(\{*\}\times \R\Pp^{1})$ the left-foliation. The action of an isometry extends continuously to the boundary, and preserves the two foliations. Moreover, it acts on each line by a projective transformation, thus giving an identification between $\SO_{0}(2,2)$ and $\dPSL$.\\
\\
\indent The Lorentzian metric on $AdS_{3}$ induces on $\partial_{\infty}AdS_{3}$ a conformally flat Lorentzian structure. To see this, notice that the map
\begin{align*}
	F: D \times S^{1} &\rightarrow \widehat{AdS_{3}}\\
		(z,w) &\mapsto \left(\frac{2}{1-\|z\|^{2}}z, \frac{1+\|z\|^{2}}{1-\|z\|^{2}}w\right)
\end{align*}
is a diffeomorphism, hence $D\times S^{1}$ is a model for anti-de Sitter space if endowed with the pull-back metric
\[
	F^{*}g_{AdS_{3}}=\frac{4}{(1-\|z\|^{2})^{2}}|dz|^{2}-\left(\frac{1+\|z\|^{2}}{1-\|z\|^{2}}\right)^{2}d\theta'^{2} \ .
\]
Therefore, by composing with the projection $\pi:\widehat{AdS_{3}}\rightarrow AdS_{3}$, we deduce that $\pi\circ F$ continuously extends to a homeomorphism
\begin{align*}
	\partial_{\infty}F: S^{1} \times S^{1} &\rightarrow \partial_{\infty}AdS_{3}\\
		(z,w) &\rightarrow (z,w)
\end{align*}
and in these coordinates the conformally flat Lorentzian structure is induced by the conformal class 
\[
	c=[d\theta^{2}-d\theta'^{2}]  \ .
\]
The Einstein Universe $Ein^{1,1}$ is the boundary at infinity of anti-de Sitter space endowed with this conformal Lorentzian structure. Notice that the light-cone at each point $p \in Ein^{1,1}$ is generated by the two lines in the left- and right-foliation described above.

\subsection{The moduli space of light-like polygons in $Ein^{1,1}$}
A light-like polygon in $Ein^{1,1}$ is an embedded, non-homotopically trivial $1$-simplex $\Delta \subset Ein^{1,1}$ homeomorphic to a circle, such that every edge is a light-like segment. We will always assume that the polygon is oriented and its orientation is compatible with the orientation of the boundary at infinity of a totally geodesic space-like plane in $AdS_{3}$. We denote with $\mathcal{MLP}_{2k}$ the moduli space of light-like polygons in $Ein^{1,1}$ with $2k$-vertices, up to the conformal action of $\dPSL$.\\
\\
\indent In order to describe the geometry of this moduli space, we recall that a curve $\gamma$ in $Ein^{1,1}$ can be seen as a graph of a function $f_{\gamma}:\R\Pp^{1} \rightarrow \R\Pp^{1}$ in the following way. Fix a totally geodesic space-like plane $P_{0}$ in $AdS_{3}$. ($P_{0}$ will be fixed for the rest of the paper.) Its boundary at infinity describes a circle in $Ein^{1,1}$. Any $\xi \in Ein^{1,1}$ lies in a unique line belonging to the right foliation and a unique line belonging to the left foliation of $Ein^{1,1}$. Those two lines intersect the boundary at infinity of $P_{0}$ in exactly one point, that we denote with $\pi_{r}(\xi)$ and $\pi_{l}(\xi)$, respectively. We can thus associate to $\gamma$ a map $f_{\gamma}:\R\Pp^{1} \rightarrow \R\Pp^{1}$ defined by the property that
\[
	f_{\gamma}(\pi_{l}(\xi))=\pi_{r}(\xi) \ \ \ \ \ \  \text{for every $\xi \in \gamma$} .
\]
This procedure gives a well-defined map, as soon as $\gamma$ is an acausal curve, i.e. locally any two points of $\gamma$ cannot be connected by a non-space-like curve (\cite{bon_schl}). However, in case of light-like polygons, we can make this construction work and associate to every light-like polygon $\Delta$ a unique upper-semicontinuous, orientation-preserving, piece-wise constant map $f_{\Delta}:\R\Pp^{1} \rightarrow \R\Pp^{1}$ that determines uniquely the polygon. Namely, if $e_{r}$ is an open edge of the polygon that lies on a line of the right-foliation, then $\pi_{l}(e_{r})$ is an open interval in the boundary at infinity of $P_{0}$ and $\pi_{r}(e_{r})$ consists of only one point. Moreover, the union of $\pi_{l}(e_{r})$ over all such open edges covers the whole boundary at infinity of $P_{0}$ but a finite number of points, which correspond to the left-projections of the vertices. If we identify $\R\Pp^{1}$ with $\partial_{\infty}P_{0}$ (and we fix such an identification from now on), there exists a unique upper-semicontinuous map $f_{\Delta}:\R\Pp^{1} \rightarrow \R\Pp^{1}$ such that
\[
	f_{\Delta}(\pi_{l}(e_{r}))=\pi_{r}(e_{r})
\]
for every open edge $e_{r}$ as above. It can be easily checked that $f_{\Delta}$ preserves the orientation. Clearly, this function determines the polygon uniquely. Notice that the conformal action of $\dPSL$ on $Ein^{1,1}$ translates into a pre- and post-composition by projective transformations.\\
\\
\indent A marked ideal polygon in $\h^{2}$ is an ideal polygon with a preferred choice of a vertex. We denote with $\mathcal{TP}_{k}$ the moduli space of marked ideal polygons in the hyperbolic plane with $k$ vertices.  

\begin{prop}\label{prop:moduli_polygo} The moduli space $\mathcal{MLP}_{2k}$ is an orbifold of dimension $2k-6$, if $k\geq 3$.
\end{prop}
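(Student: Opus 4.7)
The plan is to parametrize light-like polygons via the correspondence $\Delta \mapsto f_\Delta$ described above, and then keep track of dimensions and group actions. First, I would observe that consecutive edges of a light-like polygon cannot belong to the same foliation: two consecutive light-like segments in the right-foliation (resp.\ left-foliation) would lie on the same projective line, hence could not meet at an honest vertex of an embedded polygon. Therefore the $2k$ edges of $\Delta$ alternate between the two foliations, giving $k$ right-edges and $k$ left-edges.

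The second step is to show that fixing a marked vertex identifies the set of oriented, marked light-like polygons with $2k$ vertices with an open subset of $(\R\Pp^1)^k \times (\R\Pp^1)^k$. Indeed, the $k$ jump points $(x_1,\dots,x_k)$ of $f_\Delta$ are the $\pi_l$-projections of the $k$ left-edges, and the $k$ values $(y_1,\dots,y_k)$ are the constant values $\pi_r(e_r)$ attained by $f_\Delta$ on each complementary open interval, which coincide with the $\pi_r$-projections of the $k$ right-edges. The orientation assumption on $\Delta$ forces both $k$-tuples to be cyclically ordered on $\R\Pp^1$, with strict inequalities coming from the embeddedness of $\Delta$, and these two $k$-tuples can be independently and freely prescribed subject to these open conditions. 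Hence the space of marked polygons is a smooth manifold of dimension $2k$, and $\mathcal{MLP}_{2k}$ is obtained from it by quotienting by the cyclic group $\Z/k\Z$ of vertex-relabelings and by $\dPSL$, which acts by Möbius transformations on $(x_i)$ and $(y_i)$ separately.

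The third step is the dimension count and the orbifold structure. Since $\PSL(2,\R)$ acts simply transitively on positively oriented triples of distinct points of $\R\Pp^1$, for $k \geq 3$ the diagonal action of $\dPSL$ on pairs of cyclically ordered $k$-tuples is proper with trivial stabilizer on a dense open subset, and with finite stabilizer everywhere (namely, the intersection of the cyclic rotational symmetries of the two configurations). The $\Z/k\Z$-action by relabelings is also finite. Combining the two, we obtain an orbifold quotient of real dimension $2k - 2\cdot 3 = 2k - 6$.

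The main subtlety I anticipate is the careful verification that the subset of $(\R\Pp^1)^k \times (\R\Pp^1)^k$ parametrizing genuine embedded light-like polygons (as opposed to, say, degenerate or self-intersecting configurations) is indeed open and describable by the strict cyclic ordering conditions alone, and that passing to the quotient by $\dPSL \times \Z/k\Z$ produces a well-defined orbifold structure compatible with the quotient topology declared on $\mathcal{MLP}_{2k}$.
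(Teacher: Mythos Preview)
Your proposal is correct and follows essentially the same approach as the paper: both arguments use the correspondence $\Delta \mapsto f_\Delta$ to encode a light-like polygon by the pair of cyclically ordered $k$-tuples (jump points and values) on $\R\Pp^{1}$, and then pass to the quotient by the separate $\PSL(2,\R)$-actions and the diagonal $\Z_{k}$-relabeling to obtain an orbifold of dimension $2k-6$. The paper packages this as a homeomorphism $\mathcal{MLP}_{2k}\cong(\mathcal{TP}_{k}\times\mathcal{TP}_{k})/\Z_{k}$ with $\mathcal{TP}_{k}\cong\R^{k-3}$, which is exactly your parametrization after first quotienting each factor by $\PSL(2,\R)$.
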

\begin{proof}We define a homeomorphism from $(\mathcal{TP}_{k}\times \mathcal{TP}_{k})/\Z_{k}$ to $\mathcal{MLP}_{2k}$, where $\Z_{k}$ is the cyclic group of order $k$ corresponding to the diagonal change of markings. Using the correspondence between light-like polygons up to conformalities and upper-semicontinuous, orientation-preserving, piece-wise constant maps $f:\R\Pp^{1} \rightarrow \R\Pp^{1}$ up to pre- and post-composition with projective transformations, we only need to show that such functions are uniquely determined by two marked ideal polygons up to the diagonal action of the cyclic group. Now, given two marked ideal polygons $(P,p_{0})$ and $(Q,q_{0})$, the orientation on $\R\Pp^{1}$ induces a natural labelling of the vertices $p=p_{0}, p_{1}, \dots, p_{k-1}$ of $P$ and $q=q_{0}, q_{1}, \dots q_{k-1}$ of $Q$. We can thus construct a unique upper-semicontinuous, orientation-preserving, piece-wise constant function $f:\R\Pp^{1} \rightarrow \R\Pp^{1}$ with points of discontinuity $\{p_{0}, \dots, p_{k-1}\}$ such that
$f(p_{j})=q_{j}$ for $j=0, \dots, k-1$. Since a diagonal change of marking produces the same function, the moduli space of such maps is exactly 
$(\mathcal{TP}_{k}\times \mathcal{TP}_{k})/\Z_{k}$. \\
\indent Moreover, since projective transformations act transitively on triples of points, we have that
\[
    \mathcal{TP}_{k}\cong \R^{k-3}\ .
\]
Namely, there exists a unique representative of $(P,p_{0})$ in its $\PSL(2,\R)$-orbit such that $p_{0}=[0,1]$, $p_{1}=[1,1]$ and $p_{2}=[1,0]$ and a system of coordinates is then given for instance by the cross ratios $x_{j}=\cro(p_{0}, p_{1}, p_{2}, p_{j})$.
\end{proof}

\begin{oss} If $k=2$, the moduli space $\mathcal{MLP}_{4}$ consists of only one element, represented by the boundary at infinity of the horospherical surface in $AdS_{3}$ (see Section \ref{sec:from_polyno}).
\end{oss}

\noindent We put on $\mathcal{MLP}_{2k}$ the topology induced by the identification
\[
    \mathcal{MLP}_{2k}\cong (\mathcal{TP}_{k} \times \mathcal{TP}_{k})/\Z_{k}
\]
found in Proposition \ref{prop:moduli_polygo}. In particular, a sequence of light-like polygons $\Delta_{n}$ converges to $\Delta$ if and only if, denoting with $f_{\Delta_{n}}$ and $f_{\Delta}$ the corresponding defining functions, the points of discontinuity and the images of $f_{\Delta_{n}}$ converge to the points of discontinuity and the images of $f_{\Delta}$ preserving the markings, up to the action of $\PSL(2, \R)$. This is equivalent to say that the graphs of $f_{\Delta_{n}}$ converge to the graph of $f_{\Delta}$ in the Hausdorff topology.\\

\indent Comparing with the result of Proposition \ref{prop:moduli_poly}, one easily sees that the two moduli spaces $\mathcal{MQ}_{d}$ and $\mathcal{MLP}_{2(d+2)}$ are abstractly homeomorphic. In the next section, we will construct geometrically an explicit homeomorphism between them.

\section{From polynomial quadratic differentials to light-like polygons}\label{sec:from_polyno}
In this section we see how to associate a light-like polygon in the Einstein Universe to a polynomial quadratic differential $q$ on the complex plane. The construction is based on the existence of a complete maximal surface with second fundamental form determined by $q$. Its boundary at infinity will be the desired light-like polygon.

\subsection{Complete maximal space-like surfaces in $AdS_{3}$} We first recall some basic facts about complete space-like embeddings of surfaces in anti-de Sitter space. The material covered here is classical and can be found for instance in \cite{bbsads} and \cite{bon_schl}. See also \cite{TambuCMC} for generalisations to constant mean curvature surfaces and \cite{BTT} for higher signature. \\
\\
\indent Let $U\subset \C$ be a simply-connected domain. We say that $f:U \rightarrow AdS_{3}$ is a space-like embedding if $f$ is an embedding and the induced metric $I=f^{*}g_{AdS}$ is Riemannian. The Fundamental Theorem of surfaces embedded in anti-de Sitter space ensures that such a space-like embedding is uniquely determined, up to post-composition by a global isometry of $AdS_{3}$, by its induced metric $I$ and its shape operator $B:TU \rightarrow TU$, which satisfy 
\[
	\begin{cases}
	d^{\nabla}B=0 \ \ \ \ \ \ \ \ \ \ \ \ \ \ \ \ \ \ \ \ \ \ \ \text{(Codazzi equation)}\\
	K_{I}=-1-\det(B) \ \ \ \ \ \ \ \ \ \ \text{(Gauss equation)}
	\end{cases}
\]
where $\nabla$ is the Levi-Civita connection and $K_{I}$ is the curvature of the induced metric on $S=f(U)$. We will always assume in this paper that the induced metric $I$ is complete.\\
\\
\indent We say that $S$ is maximal if $B$ is traceless. In this case, the Codazzi equation implies that the second fundamental form $II=I(B\cdot, \cdot)$ is the real part of a quadratic differential $q$, which is holomorphic for the complex structure compatible with the induced metric $I$ on $S$. \\
\\
\indent We denote with $\widehat{S}$ a lift of $S$ to $\widehat{AdS}_{3}$. We have the following:
\begin{prop}\label{prop:graph} If we identify $\widehat{AdS}_{3}$ with $D\times S^{1}$, then $\widehat{S}$ is the graph of a $2$-Lipschitz map from $D$ to $S^{1}$. 
\end{prop}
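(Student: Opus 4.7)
The plan is to translate the space-like condition into a pointwise constraint on tangent vectors in the $D\times S^{1}$ model, and then to use completeness of $I$ to globalise the resulting local graph structure.

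First, I would unpack the space-like cone. Writing a tangent vector at $(z,\theta')\in D\times S^{1}$ as $(v_{D},v_{S^{1}})$, the explicit expression
\[
	F^{*}g_{AdS_{3}}=\frac{4}{(1-\|z\|^{2})^{2}}|dz|^{2}-\left(\frac{1+\|z\|^{2}}{1-\|z\|^{2}}\right)^{2}d\theta'^{2}
\]
shows that $(v_{D},v_{S^{1}})$ is space-like if and only if $(1+\|z\|^{2})|v_{S^{1}}|<2\|v_{D}\|$. In particular $v_{D}$ cannot vanish on a non-zero space-like vector, so the projection $\pi_{D}:\widehat{S}\to D$ has everywhere surjective differential and is a local diffeomorphism.

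Next, I would promote $\pi_{D}$ to a global diffeomorphism using completeness of $I$. Given a smooth path $\gamma:[0,1]\to D$ starting at $\pi_{D}(p)$ for some $p\in\widehat{S}$, the local diffeomorphism property lets me lift $\gamma$ to a curve $\widetilde{\gamma}$ in $\widehat{S}$ on a maximal sub-interval. On this interval the space-like condition yields
\[
	\|\widetilde{\gamma}'(t)\|_{I}^{2}\leq \frac{4|\gamma'(t)|^{2}}{(1-\|\gamma(t)\|^{2})^{2}},
\]
so the $I$-length of $\widetilde{\gamma}$ is bounded by the hyperbolic length of $\gamma$, which is finite. Completeness of $I$ then forces $\widetilde{\gamma}$ to extend to the whole of $[0,1]$, and the standard argument using path-lifting together with simple-connectedness of $D$ shows that $\pi_{D}$ is a homeomorphism. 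This identifies $\widehat{S}$ with the graph of a uniquely determined smooth map $u:D\to S^{1}$.

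Finally, the Lipschitz bound reads off directly: at any point, the tangent vector $(v,du(v))$ to the graph is space-like, so
\[
	|du(v)|\leq \frac{2\|v\|}{1+\|z\|^{2}}\leq 2\|v\|,
\]
and integrating along Euclidean segments yields that $u$ is 2-Lipschitz on $D$. The main obstacle is the surjectivity step: one must carefully combine the completeness hypothesis on $I$ with the length estimate to ensure that lifts of paths in $D$ really extend to the full interval; once this is handled, everything else is a direct reading of the metric formula.
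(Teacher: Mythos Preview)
Your proposal is correct and follows essentially the same approach as the paper: both hinge on the inequality $I\leq \pi_{D}^{*}g_{\h^{2}}$ (coming from the space-like condition) together with completeness of $I$ to show that the projection $\pi_{D}$ is a diffeomorphism, and then read off the $2$-Lipschitz bound from the same pointwise inequality $|du(v)|\leq 2\|v\|/(1+\|z\|^{2})$. The only cosmetic difference is that the paper packages the globalisation step as ``$\pi_{D}^{*}g_{\h^{2}}$ is complete, hence $\pi_{D}$ is a proper immersion, hence a covering of the simply-connected disc,'' whereas you spell out the equivalent path-lifting argument directly.
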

\begin{proof}Let $\pi_{1}:\widehat{S}\subset D\times S^{1} \rightarrow D$ denote the projection onto the first factor, and let $g_{\h^{2}}$ be the hyperbolic metric on the unit disc $D$. Then, $\pi_{1}^{*}g_{\h^{2}}\geq I$. Since $I$ is complete, $\pi_{1}^{*}g_{\h^{2}}$ is also a complete Riemannian metric on $\widehat{S}$. If follows that $\pi:\widehat{S}\rightarrow D$ is a proper immersion, hence a covering. Since $D$ is simply connected and $\widehat{S}$ is connected, it is a diffeomorphism.\\
\indent Since the projection onto the first factor is a diffeomorphism, $\widehat{S}$ is the graph of a map $f:D\rightarrow S^{1}$. Because $\widehat{S}=\graph(f)$ is supposed to be space-like, for every $z \in D$ and $v \in T_{z}D$ we must have
\[
	\frac{4}{(1-\|z\|^{2})^{2}}\|v\|^{2}-\left(\frac{1+\|z\|^{2}}{1-\|z\|^{2}}\right)^{2}\|df_{z}(v)\|^{2}>0 \ ,
\]
which implies
\[
	\|df_{z}(v)\|<\frac{2}{1+\|z\|^{2}}\leq 2 \ ,
\]
hence $f$ is $2$-Lipschitz.
\end{proof}

\begin{oss} This shows that $S$ has at most two lifts to $\widehat{AdS}_{3}$, each of them diffeomorphic to a disc.
\end{oss}

\begin{cor} The closure of a complete space-like surface $S$ intersects the Einstein Universe in a topological circle.
\end{cor}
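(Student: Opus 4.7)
My plan is to carry out the argument at the level of the lift $\widehat{S}$ in the solid torus model $D \times S^{1}$ of $\widehat{AdS}_{3}$, extract its boundary trace, and then push down to $Ein^{1,1}$.

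First, I would invoke Proposition \ref{prop:graph} to realise $\widehat{S}$ as the graph of a $2$-Lipschitz map $f : D \to S^{1}$. The Lipschitz constant here is with respect to the Euclidean metric on $D$, as is clear from the uniform bound $\|df_{z}(v)\| < 2$ appearing in that proof. Since $D$ is bounded, any globally Lipschitz map from $D$ into $S^{1}$ is uniformly continuous and therefore extends continuously to $\overline{f} : \overline{D} \to S^{1}$.

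Next, I would check that the closure of $\widehat{S} = \graph(f)$ in $\overline{D} \times S^{1}$ is exactly $\graph(\overline{f})$, so that its trace on the boundary torus $\partial D \times S^{1}$ is the set $\{(z, \overline{f}(z)) : z \in \partial D\}$. This is a topological circle, embedded in the $2$-torus via projection onto the first factor. Finally, I would push this circle down to $\partial_{\infty} AdS_{3}$ through the extension $\partial_{\infty} F : S^{1} \times S^{1} \to Ein^{1,1}$ recalled in Section \ref{sec:polygo}, which is a homeomorphism. The resulting image is a topological circle and by construction it coincides with the intersection of the closure of $S$ with $Ein^{1,1}$.

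The only genuinely delicate point is the existence of the continuous boundary extension $\overline{f}$, and this follows immediately from the global Euclidean-Lipschitz bound on $f$ together with the boundedness of $D$. The remainder of the argument is formal, relying only on Proposition \ref{prop:graph} and the description of the compactification of $\widehat{AdS}_{3}$ given in Section \ref{sec:polygo}, so I do not expect any serious obstacle.
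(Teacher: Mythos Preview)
Your proposal is correct and follows essentially the same route as the paper: invoke Proposition~\ref{prop:graph} to write $\widehat{S}$ as the graph of a $2$-Lipschitz map $f:D\to S^{1}$, extend $f$ continuously to $\partial D$, identify the boundary trace as the graph of this extension (hence a topological circle), and project to $Ein^{1,1}$. The only difference is that you spell out the uniform-continuity justification for the boundary extension, which the paper leaves implicit.
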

\begin{proof} By Proposition \ref{prop:graph}, a lift $\widehat{S}$ of $S$ is the graph of a $2$-Lipschitz map $f:D\rightarrow S^{1}$. This extends to a unique continuous function $\partial f:S^{1}\rightarrow S^{1}$, whose graph is thus the boundary at infinity of $\widehat{S}$. Its image under the projection $\pi:\R^{4}\setminus \{0\} \rightarrow \R\Pp^{3}$ is the boundary at infinity of $S$, which is therefore a topological curve in the Einstein Universe.
\end{proof}

\indent We will see in the next section that the boundary at infinity of a complete maximal surface whose second fundamental form is the real part of a polynomial quadratic differential on the complex plane is a light-like polygon. 

\begin{defi} Given a curve $\Gamma \subset Ein^{1,1}$, the convex hull $\mathcal{C}(\Gamma)$ of $\Gamma$ is the smallest convex subset of $AdS_{3}$ with boundary at infinity $\Gamma$.
\end{defi}

\begin{prop}\label{prop:convexhull}Let $S$ be a complete maximal surface in $AdS_{3}$ with boundary at infinity $\Gamma$. Then $S$ in contained in the convex hull of $\Gamma$.
\end{prop}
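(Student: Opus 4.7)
The plan is to argue by contradiction using the maximum principle applied to the restriction of a linear functional on the ambient $\R^{2,2}$ to the maximal surface. First I would characterize the convex hull projectively: $\mathcal{C}(\Gamma)$ is the intersection of all closed projective half-spaces of $\R\Pp^{3}$ whose closure in $\overline{AdS}_{3}$ contains $\Gamma$. Lifting to $\widehat{AdS}_{3}\subset \R^{4}$, it therefore suffices to show that for every linear functional $\ell:\R^{4}\to \R$ with $\ell\geq 0$ on a chosen lift $\widehat{\Gamma}$ of $\Gamma$, the restriction $\phi=\ell|_{\widehat{S}}$ is non-negative.

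Next I would set up the contradiction: by Proposition \ref{prop:graph}, $\widehat{S}$ is the graph of a $2$-Lipschitz map $D\to S^{1}$, so its closure in $\overline{D}\times S^{1}$ is compact and $\phi$ extends continuously to it, with boundary values $\geq 0$ on $\widehat{\Gamma}$. If $\phi$ were negative somewhere, it would attain a negative minimum at an interior point $q\in \widehat{S}$. Because the induced metric $I$ on $S$ is Riemannian, $\Delta^{S}$ is elliptic, and the classical maximum principle forces $\Delta^{S}\phi(q)\geq 0$.

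The crucial step is the pointwise identity
\[
\Delta^{S}(\ell|_{S})=2\,\ell|_{S}
\]
for any maximal spacelike surface $S\subset AdS_{3}$. To establish this, I would pick a local orthonormal frame $\{X_{1},X_{2}\}$ of $S$ and compute $\Delta^{S}(\ell|_{S})=\sum_{i}\ell\bigl(II^{S/\R^{2,2}}(X_{i},X_{i})\bigr)$, using that the Hessian of $\ell$ on the flat ambient $\R^{2,2}$ vanishes. Then I would decompose $II^{S/\R^{2,2}}=II^{S/AdS}+II^{AdS/\R^{2,2}}$. A direct Gauss-formula computation gives $II^{AdS/\R^{2,2}}(X,Y)=\langle X,Y\rangle\, x$ for the timelike position-vector normal, so $\sum_{i}II^{AdS/\R^{2,2}}(X_{i},X_{i})=2x$; and maximality kills the $II^{S/AdS}$-contribution. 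Evaluating at $q$ yields $\Delta^{S}\phi(q)=2\phi(q)<0$, contradicting the maximum principle inequality.

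The main obstacle I anticipate is getting the sign of the eigenvalue correct: the chain of computations involves two nested embeddings ($S\subset AdS_{3}\subset \R^{2,2}$) whose normals have opposite causal types (the normal to $\widehat{AdS}_{3}$ in $\R^{2,2}$ is timelike while the normal to $S$ in $AdS_{3}$ is also timelike), and careful bookkeeping of signatures in the Gauss formula is required to arrive at the positive multiplier $2$. A secondary technical point is the proper identification between the projective half-spaces cutting out $\mathcal{C}(\Gamma)$ and the linear functionals on $\R^{4}$ used upstairs, which requires choosing compatible lifts of $\Gamma$ and $S$ to $\widehat{AdS}_{3}$.
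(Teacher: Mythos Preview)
Your approach is correct in spirit and genuinely different from the paper's. The paper does not use the maximum principle at all: it simply observes that a maximal surface has opposite principal curvatures, hence is a \emph{saddle surface}, and then quotes a classical characterization (every relatively compact piece of a saddle surface lies in the convex hull of its boundary) together with an exhaustion argument. Your route via the identity $\Delta^{S}(\ell|_{S})=2\,\ell|_{S}$ is more self-contained---it does not outsource the key geometric fact to an external reference---and it makes explicit why maximality (rather than merely having bounded mean curvature) is exactly what is needed. The paper's argument, on the other hand, is one line once the saddle-surface lemma is accepted, and it would apply verbatim to any surface with principal curvatures of opposite sign, not only maximal ones.

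One genuine technical wrinkle in your write-up: the function $\phi=\ell|_{\widehat{S}}$ does \emph{not} extend continuously to the closure in $\overline{D}\times S^{1}$, because under the diffeomorphism $F:D\times S^{1}\to \widehat{AdS}_{3}$ all coordinates blow up like $(1-\|z\|^{2})^{-1}$ as $z\to\partial D$. What is true is that $(1-\|z\|^{2})\,\phi$ extends continuously, with boundary values a positive multiple of $\ell$ evaluated on the null representatives of $\widehat{\Gamma}$. So if $\ell>0$ strictly on $\widehat{\Gamma}$ then $\phi\to+\infty$ at the boundary and a negative value forces an interior minimum; the general case $\ell\geq 0$ follows by approximating the supporting half-space by nearby half-spaces with $\ell>0$ on $\widehat{\Gamma}$. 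You already flagged this boundary/lift issue as the secondary obstacle, and this is the precise fix.
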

\begin{proof}By definition $S$ is a saddle surface, that is a surface which has opposite principal curvatures at each point. A characterization of saddle surfaces (\cite[Section 6.5.1]{saddle}) says that for any relatively compact subset $C\subset S$, C is contained in the convex hull of $\partial C$. This property applied to an exhaustion by compact subsets of $S$ gives the desired result.
\end{proof}

\begin{defi}Given a curve $\Gamma \subset Ein^{1,1}$, the domain of dependence $\mathcal{D}(\Gamma)$ of $\Gamma$ consists of the set of points $p \in AdS_{3}$ such that the dual plane $p^{*}$ does not intersect $\Gamma$, where the dual plane is obtained by projecting to $AdS_{3}$ the orthogonal to a lift $\hat{p} \in \widehat{AdS}_{3}$.  
\end{defi}

\begin{cor} The lift of $S$ to $\widehat{AdS}_{3}$ has exactly two connected components, each one of them homeomorphic to a disc.
\end{cor}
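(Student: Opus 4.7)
The plan is to deduce this purely from the covering structure together with Proposition \ref{prop:graph}, without needing the explicit form of the convex hull. The only extra ingredient beyond what is stated in the remark after Proposition \ref{prop:graph} is a simple-connectedness argument that upgrades ``at most two'' to ``exactly two'' connected components.

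First I recall the setup: the projection $\pi:\widehat{AdS}_3\rightarrow AdS_3$ is a two-sheeted normal covering, and by hypothesis the surface $S$ is the image of an embedding $f:U\rightarrow AdS_3$ defined on a simply connected domain $U\subset\C$. Hence $S$, with the subspace topology, is itself homeomorphic to $U$ and in particular simply connected.

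Next I consider the restriction $\pi\colon \pi^{-1}(S)\rightarrow S$. This is a two-sheeted covering map over the simply connected base $S$. Any connected cover of a simply connected space is a homeomorphism, so a connected two-sheeted cover is impossible; therefore $\pi^{-1}(S)$ has at least two connected components, and since the covering is two-sheeted it has exactly two. Each of these components is mapped homeomorphically to $S$ by $\pi$, so each is homeomorphic to $U$.

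To conclude that each component is actually a disc, I invoke Proposition \ref{prop:graph}: any connected lift of $S$ into $\widehat{AdS}_3\cong D\times S^{1}$ is the graph of a $2$-Lipschitz map $D\rightarrow S^{1}$, and hence is homeomorphic to $D$. Applying this to each of the two connected components of $\pi^{-1}(S)$ yields the claim. The only step that requires any real content (the two-sheeted counting) is essentially free once simple-connectedness is noted, so I do not foresee a genuine obstacle here; the corollary really amounts to combining Proposition \ref{prop:graph} with the elementary lift-counting argument.
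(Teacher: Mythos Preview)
Your argument is correct, and it takes a genuinely different route from the paper's own proof. You use only the simple-connectedness of $S$ (inherited from $U$) together with elementary covering space theory: a two-sheeted cover of a simply connected base must be trivial, so $\pi^{-1}(S)$ splits into two copies of $S$, and Proposition~\ref{prop:graph} then identifies each with a disc. The paper instead invokes Proposition~\ref{prop:convexhull} (which needs $S$ maximal) to know that $S$ lies in the domain of dependence of its boundary curve; this guarantees that for a fixed $x\in\pi^{-1}(S)$ the sign function $\xi\mapsto \langle x,\xi\rangle/|\langle x,\xi\rangle|$ is well defined and continuous on $\partial_\infty\pi^{-1}(S)$, and since it takes both values the boundary (hence the preimage) is disconnected. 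Your approach is more elementary and more general: it applies to any complete space-like embedding of a simply connected domain, not just maximal ones, and it avoids the convex-hull/domain-of-dependence machinery entirely. The paper's argument, on the other hand, has the virtue of producing an explicit geometric separating function, which is in the spirit of later arguments in the paper (e.g.\ the sign of $B(u,v)$ in Lemma~\ref{lm:unique_max}).
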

\begin{proof} Let $\pi:\widehat{AdS}_{3} \rightarrow AdS_{3}$ be the canonical projection. The map $\pi^{-1}(S) \rightarrow S$ is a two-sheeted covering. Given a point $x \in \pi^{-1}(S)$, the function
\begin{align*}
    \partial_{\infty}\pi^{-1}(S) &\rightarrow \{-1, 1\}\\
        \xi &\mapsto \frac{\langle x, \xi \rangle}{|\langle x, \xi \rangle|}
\end{align*}
is a well-defined continuous map, because $S$ is contained in the convex hull of its boundary at infinity, which is included in the domain of dependence. Since $\langle x, -\xi \rangle=-\langle x, \xi \rangle$, the function takes both values, thus $\pi^{-1}(S)$ has two connected components. By Proposition \ref{prop:graph}, each connected component is homeomorphic to a disc.
\end{proof}

\subsection{Polynomial maximal surfaces} We say that a complete maximal surface in anti-de Sitter space is a polynomial maximal surface if its second fundamental form is the real part of a polynomial quadratic differential. This section is devoted to the proof of the following:

\begin{prop}\label{prop:existence}Let $q$ be a polynomial quadratic differential on the complex plane. Then there exists a polynomial maximal surface $S$ embedded in $AdS_{3}$ with second fundamental form $2\Re(q)$. Moreover, $S$ is unique up to global isometries of $AdS_{3}$.
\end{prop}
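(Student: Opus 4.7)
The plan is to reduce the problem to a semilinear elliptic PDE for a conformal factor, and then appeal to the fundamental theorem of surfaces in $AdS_{3}$ together with Proposition \ref{prop:graph}. Write $q=p(z)dz^{2}$ with $p$ a polynomial and look for the induced metric in the form $I=e^{2u}|dz|^{2}$ for an unknown $u:\C\to\R$. Because $q$ is holomorphic, the Codazzi equation is automatic; computing the shape operator $B=I^{-1}II$ associated to $II=2\Re(q)$ in the conformal coordinate $z$ gives $\mathrm{tr}\,B=0$ and $\det B=-4|p|^{2}e^{-4u}$, so that the Gauss equation $K_{I}=-1-\det B$ becomes
\[
\Delta_{0} u = e^{2u} - 4|p|^{2}e^{-2u},
\]
where $\Delta_{0}$ is the Euclidean Laplacian on $\C$. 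It is thus enough to produce a smooth solution $u$ of this equation for which $e^{2u}|dz|^{2}$ is complete.

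For existence I would use the method of sub- and super-solutions on an exhaustion by discs $D_{R}\subset\C$. A natural weak sub-solution is $u_{-}=\tfrac{1}{2}\log(2|p|)$: away from the zeros of $p$ both sides of the PDE vanish, and at each zero $u_{-}$ produces a positive Dirac mass, so $\Delta_{0}u_{-}\geq e^{2u_{-}}-4|p|^{2}e^{-2u_{-}}$ distributionally. Any constant $C_{R}$ with $e^{2C_{R}}\geq 2\max_{D_{R}}|p|$ is a super-solution on $D_{R}$, and monotone iteration produces a smooth solution $u_{R}$ of the Dirichlet problem on $D_{R}$ with $u_{-}\leq u_{R}\leq C_{R}$. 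The delicate step is to extract $R$-independent interior estimates on $u_{R}$; this is most naturally carried out after passing to the natural coordinate of $q$, in which the equation takes a sinh-Gordon-type form for the shifted unknown $w=u-\tfrac{1}{2}\log|p|$ and admits explicit barriers along $q$-half-planes, following the strategy of \cite{bbsads} and the extensions in \cite{TambuCMC}. Standard elliptic regularity then allows one to pass to a smooth limit $u$ on all of $\C$ satisfying the PDE and the bound $u\geq u_{-}$.

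The inequality $u\geq\tfrac{1}{2}\log(2|p|)$ yields $I\geq 2|q|$, and the flat metric $|q|=|p||dz|^{2}$ is complete on $\C$ because $p$ is a polynomial (the improper integral $\int^{\infty}|p(r)|^{1/2}dr$ diverges), so $I$ is complete. By the fundamental theorem of surfaces in anti-de Sitter space, the pair $(I,2\Re(q))$ defines an isometric immersion $f:\C\to AdS_{3}$, unique up to post-composition by an element of $\Isom(AdS_{3})$. To upgrade immersion to embedding, lift $f$ to $\widehat{AdS}_{3}\cong D\times S^{1}$; the first component of the lift is a proper local isometry from $(\C,I)$ into $(D,g_{\h^{2}})$ hence a covering, hence a diffeomorphism since $D$ is simply connected, so by the argument of Proposition \ref{prop:graph} the lift is a graph and $f$ is an embedding. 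Uniqueness up to global isometries reduces to uniqueness of $u$ among solutions giving a complete metric: since $s\mapsto e^{2s}-4|p|^{2}e^{-2s}$ is strictly increasing, the difference $w=u_{1}-u_{2}$ of two such solutions satisfies $\Delta_{0}w=a(z)w$ with $a\geq 0$, and a Cheng-Yau-type maximum principle on exhausting discs, using completeness of both associated metrics to control $w$ at infinity, forces $w\equiv 0$. The main obstacle I anticipate is precisely the $R$-independent upper barrier needed to pass to the limit in step two: the sub-solution comes for free from the flat $q$-metric, but a uniform upper bound genuinely exploits the polynomial growth of $p$ at infinity via the natural coordinate of $q$.
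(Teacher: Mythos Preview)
Your approach is essentially the same as the paper's: reduce existence to a scalar PDE for the conformal factor via Gauss--Codazzi, solve it, and invoke the fundamental theorem of surfaces in $AdS_{3}$. The paper sets this up through the frame field formalism (which it needs later for the asymptotic analysis of Section~\ref{subsec:rays}) and arrives at the equivalent vortex equation $u_{z\bar z}-\tfrac{1}{2}e^{2u}+\tfrac{1}{2}e^{-2u}|q|^{2}=0$ in the normalisation $I=2e^{2u}|dz|^{2}$; it then simply cites Proposition~\ref{prop:vortex} (from \cite{DW} and \cite{QL_vortex}) for existence, uniqueness, and the estimates $\tfrac{1}{2}\log|q|\leq u\leq \tfrac{1}{2}\log(|q|+C)$, rather than sketching the sub/super-solution argument as you do. Your sketch of that argument is correct, though the relevant barrier analysis in natural coordinates is carried out in \cite{DW} and \cite{QL_vortex}, not in \cite{bbsads} or \cite{TambuCMC}.

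One small point: your uniqueness argument as written is not quite complete. Completeness of the two metrics $e^{2u_{i}}|dz|^{2}$ does not by itself control $w=u_{1}-u_{2}$ at infinity, and the Cheng--Yau maximum principle is formulated for the Laplacian of a complete metric, not for $\Delta_{0}$. The clean way (and what the cited references do) is to use the two-sided barrier: both $u_{i}$ lie between $\tfrac{1}{2}\log(2|p|)$ and a global super-solution of the form $\tfrac{1}{2}\log(2|p|+C)$, so $w\to 0$ at infinity, and then the ordinary maximum principle on exhausting discs gives $w\equiv 0$.
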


The proof relies on the fact that it is always possible to find a maximal embedding $f:\C \rightarrow AdS_{3}$ with induced metric $I=2e^{2u}|dz|^{2}$ and shape operator $B=I^{-1}\Re(2q)$, for a suitable smooth function $u:\C \rightarrow \R$. Let us illustrate the procedure to construct such a maximal embedding.\\
\\
\indent Since it is convenient to work in complex coordinates, we consider $\R^{4}\subset \C^{4}$ and we extend the $\R$-bilinear form of signature $(2,2)$ to the hermitian product on $\C^{4}$ given by 
\[
	\langle z, w\rangle=z_{1}\bar{w}_{1}+z_{2}\bar{w}_{2}-z_{3}\bar{w}_{3}-z_{4}\bar{w}_{4} \ .
\]
Given a maximal conformal embedding $f:\C \rightarrow AdS_{3}$, with a slightly abuse of notation, we will still denote with $f:\C \rightarrow \widehat{AdS}_{3}\subset \C^{2,2}$ one of its lifts. Let $N$ be the unit normal vector field such that $\{f_{z}, f_{\bar{z}}, N, f\}$ is an oriented frame in $\C^{2,2}$. We define
\[
	q=\langle N_{z}, f_{\bar{z}} \rangle \ , 
\]
where $f_{\bar{z}}$ denotes the derivative of $f$ with respect to the vector field $\frac{\partial}{\partial \bar{z}}$. The embedding being maximal implies that $q$ is a holomorphic quadratic differential on the complex plane. We then define the function $u:\C \rightarrow \R$ by the relation
\[
	\langle f_{z}, f_{z} \rangle=\langle f_{\bar{z}}, f_{\bar{z}} \rangle = e^{2u} \ .
\]
Notice that, in this way, $I=2e^{2u}|dz|^{2}$ is the induced metric on the surface $S=f(\C)$. The vectors
\[
	\sigma_{1}=\frac{f_{z}}{e^{u}}, \ \ \ \sigma_{2}=\frac{f_{\bar{z}}}{e^{u}}, \ \ \ N, \ \ \ \text{and} \ \ \ f
\]
thus give a unitary frame of $(\C^{4}, \langle \cdot, \cdot \rangle)$ at every point $z \in \C$. Taking the derivatives of the fundamental relations
\[
	\langle N, N \rangle=\langle f,f\rangle=-1 \ \ \ \langle \sigma_{j}, N \rangle=\langle \sigma_{j}, f \rangle=0 \ \ \ \langle N_{z}, f_{\bar{z}}\rangle =q \ \ \ \langle \sigma_{j}, \sigma_{j}\rangle=1 \ , 
\]
one deduces that
\[
	N_{\bar{z}}=e^{-u}\bar{q}\sigma_{1} \ \ \ \bar{\partial}\sigma_{1}=-u_{\bar{z}}\sigma_{1}+e^{u}f \ \ \ \text{and} \ \ \ \bar{\partial}\sigma_{2}=u_{\bar{z}}\sigma_{2}+\bar{q}e^{-u}N \ .
\]
Therefore, the pull-back of the Levi-Civita connection $\nabla$ of $(\C^{4}, \langle \cdot, \cdot \rangle)$ via $f$ can be written in the frame $\{\sigma_{1}, \sigma_{2}, N, f\}$ as
\[
	f^{*}\nabla=Vd\bar{z}+Udz=\begin{pmatrix}
			-u_{\bar{z}} & 0 & e^{-u}\bar{q} & 0 \\
				0 & u_{\bar{z}} & 0 & e^{u} \\
				0 & e^{-u}\bar{q} & 0 & 0 \\
				e^{u} & 0 & 0 & 0 
			\end{pmatrix} d\bar{z}+ \begin{pmatrix}
						u_{z} & 0 & 0 & e^{u} \\
						0 & -u_{z} & qe^{-u} & 0 \\
						qe^{-u} & 0 & 0 & 0 \\
						0 & e^{u} & 0 & 0 
						\end{pmatrix}dz \ .
\]
and the flatness of $f^{*}\nabla$ is equivalent to $u$ being a solution of the vortex equation
\begin{equation}\label{eq:PDE}
	u_{z\bar{z}}-\frac{1}{2}e^{2u}+\frac{1}{2}e^{-2u}|q|^{2}=0 \ ,
\end{equation}
where $|q|$ denotes here the modulus of the polynomial $q(z)$ so that $q=q(z)dz^{2}$.
Viceversa, given a holomorphic quadratic differential $q$ and a solution $u$ to Equation (\ref{eq:PDE}), the $\mathfrak{so}(2,2)$-valued $1$-form
\[
	\omega=Udz+Vd\bar{z}
\]
can be integrated to a map $F:\C \rightarrow \SL(4,\C)$, whose last column gives us a maximal embedding into $AdS_{3}$ with induced metric $I=2e^{2u}|dz|^{2}$ and shape operator $B=I^{-1}\Re(2q)$. \\
\\
The proof of Proposition \ref{prop:existence} follows then from the Fundamental Theorem of surfaces embedded in $AdS_{3}$ and well-known results about vortex equations:
\begin{prop}[\cite{DW}, \cite{QL_vortex}]\label{prop:vortex} Given a polynomial quadratic differential $q$ on the complex plane, there exists a unique solution $u:\C \rightarrow \R$ to Equation (\ref{eq:PDE}). Moreover, $u$ satisfies the following estimates: 
\begin{itemize} 
    \item [i)] there exist constants $C,M>0$ depending continuously on the coefficients of $q$ such that $    \max\{-M,\frac{1}{2}\log(|q|)\} \leq u \leq \frac{1}{2}\log(|q|+C)$;
    \item [ii)] for $|z|\to +\infty$ we have $u-\frac{1}{2}\log(|q|)=O(e^{-2\sqrt{2}r}/\sqrt{r})$, where $r$ is the $|q|$-distance from the zeros of $q$. 
\end{itemize}
\end{prop}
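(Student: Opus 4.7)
My plan is to combine the classical sub- and super-solution method for semilinear elliptic PDEs with a maximum-principle argument, following the strategy used for vortex-type equations in \cite{DW, QL_vortex}.

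\textbf{Step 1: construction of barriers.} I would first write the equation in the form $\Delta u = 4\mathcal{F}(u,z)$, where $\mathcal{F}(u,z)=\tfrac{1}{2}(e^{2u}-e^{-2u}|q|^{2})$ is strictly increasing in $u$ and vanishes precisely at $u_{0}(z)=\tfrac12\log|q(z)|$. Away from the zeros of $q$ the function $u_{0}$ is harmonic, so $u_{0}$ is a classical solution there; at the zeros it has logarithmic singularities which make the distributional Laplacian of $\max\{-M,u_{0}\}$ a nonnegative measure provided $-M<\tfrac12\log|q|$ outside a small neighbourhood of the zero set. Choosing $M$ large depending on the coefficients of $q$, this gives a global subsolution $u_{-}:=\max\{-M,\tfrac12\log|q|\}$. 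For a supersolution, a direct computation of $\tfrac14\Delta\log(|q|+C)$ together with $\mathcal{F}(\tfrac12\log(|q|+C))=\tfrac{1}{2}\frac{2C|q|+C^{2}}{|q|+C}$ shows that the inequality $\Delta u_{+}\le 4\mathcal{F}(u_{+},z)$ holds as soon as $C$ is chosen large in terms of the coefficients of $q$; here the key point is that $\Delta\log(|q|+C)$ grows at most polynomially away from the zeros of $q$, while $\mathcal{F}(u_{+},z)$ grows linearly in $|q|$. Note that both $M$ and $C$ can be chosen continuously in the coefficients of $q$, which yields estimate (i).

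\textbf{Step 2: existence.} I would now solve the Dirichlet problem on the disc $D_{R}=\{|z|<R\}$ with boundary value $u_{+}|_{\partial D_{R}}$ via the monotone iteration scheme starting from $u_{-}$: the monotonicity of $\mathcal{F}$ in $u$ ensures that the iteration is well-defined and converges to a smooth solution $u_{R}$ trapped between $u_{-}$ and $u_{+}$. Uniform interior $C^{k}$-estimates (from the $L^{\infty}$ bound and standard elliptic regularity) let me extract a subsequence $u_{R_{n}}$ converging in $C^{\infty}_{\mathrm{loc}}$ to a solution $u$ on all of $\C$, which still satisfies $u_{-}\le u\le u_{+}$.

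\textbf{Step 3: uniqueness.} If $u_{1},u_{2}$ are two solutions satisfying the bounds of (i), then $w=u_{1}-u_{2}$ is bounded and satisfies $\Delta w = 4(\mathcal{F}(u_{1},z)-\mathcal{F}(u_{2},z))=c(z)w$ with $c(z)\ge 0$ by the monotonicity of $\mathcal{F}$ in $u$. On the whole plane, the Omori--Yau maximum principle applied to $w$ and $-w$ (or, equivalently, a direct bounded-solution argument using $c\ge 0$) forces $w\equiv 0$.

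\textbf{Step 4: decay.} The main obstacle, and the most delicate step, is estimate (ii). Away from the zeros of $q$, choose a natural coordinate $w$ with $dw^{2}=q$, so that $|dw|^{2}=|q||dz|^{2}$ is the flat $|q|$-metric. In this coordinate the function $v:=u-\tfrac12\log|q|$ satisfies the sinh-Gordon-type equation
\[
\Delta_{w}v=4\sinh(2v),
\]
and (i) already gives $v$ bounded and nonnegative. Linearising about $v=0$ the equation becomes $\Delta_{w}v\approx 8v$, whose positive radial barrier $r^{-1/2}e^{-2\sqrt{2}r}$ is asymptotic to the Bessel-type fundamental solution. I would therefore build an explicit radial supersolution of the form $Ar^{-1/2}e^{-2\sqrt{2}r}$ to the linearised equation, check (using $\sinh(2v)\ge 2v$) that it dominates $v$ on the boundary of an exhausting family of $|q|$-discs centred at a regular point far from all zeros, and conclude by the maximum principle that $v=O(r^{-1/2}e^{-2\sqrt{2}r})$, exactly as claimed. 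The hard part here is to propagate the barrier globally: near each zero one must compare $v$ on a uniformly sized $|q|$-disc using step (i), while between zeros one chains together the natural coordinates along $|q|$-geodesics.
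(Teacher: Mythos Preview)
The paper does not give a proof of this proposition at all: it is stated with attributions to \cite{DW} and \cite{QL_vortex} and used as a black box. So there is nothing to compare against in the paper itself; what you have written is a reconstruction of the argument from those references, and your overall architecture (barriers $\Rightarrow$ existence by exhaustion $\Rightarrow$ uniqueness via maximum principle $\Rightarrow$ exponential decay from the sinh--Gordon linearisation in a natural coordinate) is exactly the standard route taken there.

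That said, two of your barrier claims are not quite right as stated. First, the constant $-M$ is \emph{not} a subsolution on the region $\{|q|<e^{-2M}\}$ where you use it: there $\Delta(-M)=0$ while $4\mathcal{F}(-M,z)=2e^{-2M}-2e^{2M}|q|^{2}>0$, so the subsolution inequality fails in the interior of the capped set. The lower bound $u\ge-M$ in (i) is usually obtained a posteriori from the upper bound and interior gradient/Harnack estimates, not by using $-M$ as part of the subsolution; the genuine global subsolution is just $\tfrac12\log|q|$ (which is $-\infty$ at the zeros, but this causes no difficulty for the iteration or for the comparison argument). Second, your supersolution $u_{+}=\tfrac12\log(|q|+C)$ fails near a \emph{simple} zero of $q$: a short computation gives $\Delta u_{+}\sim \tfrac{1}{2C|z|}$ there, which exceeds $4\mathcal{F}(u_{+},z)\sim 2C$ on a disc of radius $\sim C^{-2}$. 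The references patch this either by modifying the barrier locally near the zeros or by absorbing the defect into the exhaustion argument. These are technical repairs rather than conceptual gaps, and once made your sketch goes through; the decay step (your Step~4) is correct as written.
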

\noindent It follows immediately that the induced metric on $S$ is complete, since
\[
	I=2e^{2u}|dz|^{2} \geq 2|q|
\]
and the complex plane endowed with the flat metric with cone singularities $|q|$ is complete.

\subsection{The horospherical surface}\label{subsec:horo} The solution to Equation (\ref{eq:PDE}) can be written explicitly in the special case when $q$ is a constant polynomial quadratic differential, and the associated maximal surface in $AdS_{3}$ appears in the literature as the horospherical surface (\cite{bon_schl}, \cite{seppimaximal}, \cite{TambuCMC}). Let us describe in detail the related frame field $F_{0}:\C \rightarrow \SL(4,\C)$ in this case.\\ 
\\
\indent We choose a global coordinate $z$ so that $q=dz^{2}$. The corresponding solution to Equation (\ref{eq:PDE}) is then clearly $u=0$. The $\mathfrak{so}(2,2)$-valued $1$-form becomes 
\[
	\omega_{0}=V_{0}d\bar{z}+U_{0}dz=\begin{pmatrix}
			0 & 0 & 1 & 0 \\
				0 & 0 & 0 & 1 \\
				0 & 1 & 0 & 0 \\
				1 & 0 & 0 & 0 
			\end{pmatrix} d\bar{z}+ \begin{pmatrix}
						0 & 0 & 0 & 1 \\
						0 & 0 & 1 & 0 \\
						1 & 0 & 0 & 0 \\
						0 & 1 & 0 & 0 
						\end{pmatrix}dz \ .
\]
The frame field of the horospherical surface is thus 
\[
	F_{0}(z)=A_{0}\exp(U_{0}z+V_{0}\bar{z}) \ ,
\]
for some constant matrix $A_{0} \in \SL(4, \C)$. For our convenience, we choose
\[
    A_{0}=\frac{1}{\sqrt{2}}\begin{pmatrix}
            1 & 1 & 0 & 0 \\
            -i & i & 0 & 0 \\
            0 & 0 & 1 & 1 \\
            0 & 0 & -1 & 1
            \end{pmatrix}
\]
A simple computation shows that the matrix $U_{0}z+V_{0}\bar{z}$ is diagonalisable by a constant unitary matrix $R$ so that
\[
	R^{-1}(U_{0}z+V_{0}\bar{z})R=\diag(2\Re(z), 2\Im(z), -2\Re(z), -2\Im(z)) \ .
\]
Therefore, we can write 
\[
	F_{0}(z)=A_{0}R\diag(e^{2\Re(z)}, e^{2\Im(z)}, e^{-2\Re(z)}, e^{-2\Im(z)})R^{-1} \ . 
\]
The resulting maximal embedding is given by the last column of $F_{0}(z)$, that is
\[
    f_{0}(z)=\frac{1}{\sqrt{2}}(\sinh(2\Re(z)), \sinh(2\Im(z)), \cosh(2\Re(z)), \cosh(2\Im(z))) \ .
\]
In particular we can compute explicitly the boundary at infinity of $f_{0}$: it a light-like polygon with $4$-vertices, as Table (\ref{table:1}) shows.
Moreover, notice that the frame field $F_{0}(z)$ being diagonalizable by a constant matrix $R$ (independent of $z$) implies that the horospherical surface $f_{0}(\C)$ is the orbit in $AdS_{3}$ of a Cartan subgroup of $\SO_{0}(2,2)$.

\medskip
\begin{table}[!htb]
\begin{center}
\begin{tabular}{l l l}
\hline
\textbf{Type of path $\gamma$} & \textbf{Direction $\theta$} & \textbf{Projective limit $v_\gamma$ of $f_{0}(\gamma)$}\\
\hline
Quasi-ray& $\theta \in (-\tfrac{\pi}{4}, \tfrac{\pi}{4})$ & $v_\gamma = [1,0,1,0]$\\
Ray (of height $iy$) & $\theta = \tfrac{\pi}{4}$ & $v_\gamma=[1,s,1,s] $ for some $s(y) \in \R$\\
& & $(v_\gamma \to [0,1,0,1]$ as $y \to \infty)$\\
Quasi-ray& $ \theta \in (\tfrac{\pi}{4}, \tfrac{3\pi}{4})$ & $v_\gamma = [0,1,0,1]$\\
Ray (of height $iy$) & $\theta = \tfrac{3\pi}{4}$ & $v_\gamma=[-s,1,s,1]$ for some $s(y) \in \R$\\
& & $(v_\gamma \to [-1,0,1,0]$ as $y \to \infty)$\\
Quasi-ray& $\theta\in (\tfrac{3\pi}{4}, \tfrac{5\pi}{4})$  & $v_\gamma = [-1,0,1,0]$\\
Ray (of height $iy$) & $\theta = \tfrac{5\pi}{4}$ & $v_\gamma=[-1,-s,1,s]$ for some $s(y) \in \R$\\
& & $(v_\gamma \to [0,-1,0,1]$ as $y \to \infty)$\\
Quasi-ray& $\theta\in (\tfrac{5\pi}{4}, \tfrac{7\pi}{4})$  & $v_\gamma = [0,-1,0,1]$\\
Ray (of height $iy$) & $\theta = \tfrac{7\pi}{4}$ & $v_\gamma=[s,-1,s,1]$ for some $s(y) \in \R$\\
& & $(v_\gamma \to [1,0,1,0]$ as $y \to \infty)$\\
\hline\\
\end{tabular}
\end{center}
\caption{Limits of the standard horospherical surface along rays}\label{table:1}
\end{table}

\subsection{The boundary at infinity of a polynomial maximal surface}\label{subsec:rays}
Let $(U,w)$ be a standard half-plane for a polynomial quadratic differential $q$ of degree $k$. Let $F:U \rightarrow \SL(4,\C)$ be the frame field for the corresponding polynomial maximal surface $f:\C \rightarrow AdS_{3}$. We define the osculating map $G:U \rightarrow \SO_{0}(2,2)$ by
\[
    G(w)=F(w)F_{0}^{-1}(w)
\]
where $F_{0}:U \rightarrow \SL(4,\C)$ denotes the frame field of the horospherical surface. Notice that the map actually takes value in $\SO_{0}(2,2)$ because both frames $F(w)$ and $F_{0}(w)$ lie in the same right coset of $\SO_{0}(2,2)$ within $\SL(4,\C)$.\\
\indent Evidently $G$ is constant if and only if $f$ is itself a horospherical surface, and more generally, left multiplication by $G(w_{0})$ transforms the standard horospherical surface described in Section \ref{subsec:horo} to one which has the same tangent plane and unit normal at the point $f(w_{0})$. Therefore, in some sense $G(w)$ represents the osculating horospherical surface of $f$ at $w_{0}$. A computation using the structure equations for a maximal surface shows that
\[
    G^{-1}dG=F_{0}\Theta F_{0}^{-1} \ ,
\]
where 
\[
	\Theta(w)=\begin{pmatrix} 
				-u_{\bar{w}} & 0 & e^{-u}-1 & 0 \\
				0 & u_{\bar{w}} & 0 & e^{u}-1 \\
				0 & e^{-u}-1 & 0 & 0 \\
				e^{u}-1 & 0 & 0 & 0 
				\end{pmatrix}d\bar{w} \ + 
\]
\[
				 \ \ \ \ \ \ \ \begin{pmatrix}
				u_{w} & 0 & 0 & e^{u}-1 \\
				0 & -u_{w} & e^{-u}-1 & 0 \\
				e^{-u}-1 & 0 & 0 & 0 \\
				0 & e^{u}-1 & 0 & 0 
				\end{pmatrix}dw
\]
and $2e^{2u}|dw|^{2}$ is the induced metric on $S=f(\C)$. \\
\indent Notice that the estimates in Proposition \ref{prop:vortex} show that $\Theta(w)$ is rapidly decaying to $0$ as the distance from $w$ to the zeros of $q$ increases. Ignoring the conjugation by the matrix $F_{0}(w)$, this suggests that $G(w)$ should approach a constant as $w$ goes to infinity, which would mean that the maximal surface $S$ is asymptotic to a horospherical surface. However, the frame field $F_{0}(w)$ is itself exponentially growing as $w$ goes to infinity, with a precise rate depending on the direction. Thus the actual asymptotic behaviour of $G$ depends on the comparison between the growth of the error $\Theta(w)$ and the frame field $F_{0}(w)$. In most directions, the exponential decay of $\Theta(w)$ is faster than the growth of $F_{0}(w)$, giving a well-defined limiting horospherical surface. In exactly $2(k+2)$ unstable directions there is an exact balance, which allow the horospherical surface to shift. We will thus prove the following:

\begin{prop}\label{prop:general}Let $q$ be a monic polynomial quadratic differential on the complex plane of degree $k \geq 1$. The polynomial maximal surface $S$ in anti-de Sitter space with second fundamental form $2\Re(q)$ is asymptotic to $2(k+2)$ horospherical surfaces. Moreover, its boundary at infinity is a light-like polygon in $Ein^{1,1}$ with $2(k+2)$ vertices.
\end{prop}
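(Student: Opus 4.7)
The plan is to work on each standard half-plane $(U_i, w_i)$ of $q$ separately. On such a chart $q = dw^2$ and $|q|=1$, so Proposition \ref{prop:vortex} gives $u = O(e^{-2\sqrt 2 r}/\sqrt r)$ with $r$ the Euclidean distance in $w$ to the zeros of $q$; in particular every entry of the error matrix $\Theta(w)$ decays at rate $e^{-2\sqrt 2 t}$ along a ray of parameter $t$. The geometric information will be extracted from the osculating map $G(w) = F(w) F_0(w)^{-1}$, whose convergence at infinity says that $f$ is asymptotic to a horospherical surface, via the structural equation $G^{-1} dG = F_0\Theta F_0^{-1}$.

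Using the factorization $F_0 = A_0 R D(w) R^{-1}$ with $D(w) = \diag(e^{2\Re w}, e^{2\Im w}, e^{-2\Re w}, e^{-2\Im w})$, this equation rewrites as
\[
    G^{-1} dG = A_0 R\,\bigl[ D(w)\, (R^{-1}\Theta(w)R)\, D(w)^{-1} \bigr]\, R^{-1} A_0^{-1},
\]
so each entry of the bracketed matrix is an entry of $R^{-1}\Theta R$ multiplied by a ratio $D_{ii}/D_{jj}$, whose exponent lies in $\{\pm 4\Re w,\pm 4\Im w, \pm 2(\Re w \pm \Im w), 0\}$. The technical core of the argument is a direct inspection of the sparsity of $\Theta$ (only four nonzero entries in each of its $dw$ and $d\bar w$ parts) combined with the explicit form of $R$: it shows that the nonzero entries of the bracket carry only the milder exponents $\pm 2(\Re w\pm \Im w)$, so that the worst-case growth along a ray $w = te^{i\theta}$ is at most $e^{2\sqrt 2\,t\,|\cos(\theta\mp\pi/4)|}$, strictly smaller than the decay $e^{-2\sqrt 2 t}$ of $\Theta$ for every $\theta \in (-\pi/2,\pi/2)\setminus\{\pm\pi/4\}$. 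Hence $G^{-1}dG$ is absolutely integrable on each of the three open sectors $\Sigma_- = (-\pi/2,-\pi/4)$, $\Sigma_0 = (-\pi/4,\pi/4)$, $\Sigma_+ = (\pi/4,\pi/2)$, and $G(w)$ tends to well-defined limits $G^{\infty}_{-}, G^{\infty}_{0}, G^{\infty}_{+} \in \SO_0(2,2)$ along any quasi-ray in the respective sector. Consequently $f$ is asymptotic there to the horospherical surfaces $G^{\infty}_{\star} \cdot f_0$, and Table \ref{table:1} translates this into a single projective vertex $G^{\infty}_{\star}\cdot v_\star$ in $Ein^{1,1}$ per sector.

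Along the two unstable directions $\theta = \pm\pi/4$ I run a direct analysis on the rays $\gamma_y(t) = iy + e^{i\theta}t$: invoking the convergence of $G$ in the adjacent stable sectors and the explicit horospherical computation that underlies Table \ref{table:1}, the projective limit of $f(\gamma_y(t))$ as $t\to +\infty$ depends continuously and monotonically on the height $y\in\R$, tracing out a light-like segment in $Ein^{1,1}$ that joins the two adjacent vertex points. Thus the boundary at infinity of $f(U_i)$ consists of three vertex points connected by two light-like edges. Property (iii) of the half-plane decomposition identifies the overlap $\overline{U_i}\cap\overline{U_{i+1}}$ with a common half-line in $i\R$, so continuity of $f$ forces the extremal vertex of $f(U_i)$ to coincide with that of $f(U_{i+1})$; property (i) rules out any boundary contribution from the pre-compact remainder $\C\setminus\bigcup\overline{U_i}$. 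Summing over the $k+2$ half-planes and identifying the $k+2$ shared vertex pairs yields $2(k+2)$ light-like edges, $2(k+2)$ vertices and $2(k+2)$ asymptotic horospherical surfaces — one per stable sector. The principal obstacle is the sparsity-and-growth verification of the second paragraph: a priori the dangerous exponents $\pm 4\Re w$ and $\pm 4\Im w$ could appear and destroy integrability along the coordinate axes, so one must use the explicit structure of $\Theta$ and $R$ to check that they are killed by the pattern of zeros, leaving only the gentler exponents $\pm 2(\Re w \pm \Im w)$. This algebraic selection is precisely what pins the unstable directions to $\theta = \pm \pi/4$ within each half-plane and produces the count $2(k+2)$.
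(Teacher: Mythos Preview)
Your analysis of the stable sectors is essentially the same as the paper's: the osculating map $G=FF_0^{-1}$, the structure equation $G^{-1}dG=F_0\Theta F_0^{-1}$, and the sparsity check that only the exponents $\pm 2(\Re w\pm\Im w)$ survive are exactly what the paper does, and your conclusion that $G$ has well-defined limits $G^\infty_-,G^\infty_0,G^\infty_+$ on the three open sectors is correct.

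However, there are two genuine gaps.

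\textbf{Unstable directions.} Your treatment of the rays $\gamma_y(t)=iy+e^{\pm i\pi/4}t$ is hand-waved. Along these rays $G^{-1}dG$ is \emph{not} absolutely integrable (the conjugation growth exactly matches the decay of $\Theta$), so $G(\gamma_y(t))$ need not converge, and you cannot simply ``invoke the convergence of $G$ in the adjacent stable sectors'' together with Table~\ref{table:1} to read off the limit of $f(\gamma_y(t))$. The paper does not analyse these rays directly either; instead it proves a \emph{unipotent lemma}: crossing an unstable direction, the two stable limits are related by $L_0^{-1}L_\pm=A_0RU_\pm R^{-1}A_0^{-1}$ with $U_\pm$ unipotent supported on a single off-diagonal entry (obtained by a Gaussian-integral estimate on the ODE along a circular arc through the unstable angle). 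This specific unipotent form is what forces $L_0$ and $L_\pm$ to agree on an entire light-like edge of the model boundary, e.g.\ $L_0[1,s,1,s]=L_+[1,s,1,s]$ for all $s$, and hence the two osculating horospherical surfaces share that edge. Your ``continuously and monotonically on the height $y$'' claim is the conclusion of this computation, not something one can assert a priori.

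\textbf{Injectivity of the boundary map.} Your final count ``summing over the $k+2$ half-planes and identifying the $k+2$ shared vertex pairs'' tacitly assumes that vertices coming from non-adjacent half-planes are distinct in $Ein^{1,1}$. A priori the assembled map $\Gamma:\Delta_{2(k+2)}\to\partial_\infty S$ is only a simplicial covering of some light-like polygon, possibly of degree $>1$, which would give a polygon with $2(k+2)/d$ vertices. The paper rules this out with a separate topological argument: if two non-adjacent half-planes gave the same vertex $v$, one could build a properly embedded arc $\beta\subset\C$ (following two quasi-rays) whose image $f(\beta)$ limits to $v$ at both ends, yet each complementary disc of $\beta$ contains a quasi-ray limiting to a vertex $\neq v$, contradicting the fact that one component of $S\setminus f(\beta)$ can only accumulate on $v$. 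You need this step (or an equivalent) to conclude the vertex count is exactly $2(k+2)$.
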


The first step of the proof consists in finding the stable directions:
\begin{defi} We say that a ray $\gamma(t)=e^{i\theta}t+y$ is stable if the direction $\theta \notin \{-\pi/4, \pi/4\}$. A $q$-quasi-ray is stable if the associated ray is stable. 
\end{defi}
Notice that the possible directions of stable rays in a standard half-plane form three open intervals
\[
J_{-}=(-\pi/2, -\pi/4) \ \ \ \ J_{0}=(-\pi/4, \pi/4) \ \ \ \ \text{and} \ \ \ \ J_{+}=(\pi/4, \pi/2) .
\]
The stability of (quasi-)rays in these directions refers to the convergence of the osculating map:
\begin{lemma}If $\gamma$ is a stable ray or quasi-ray, then $\lim_{t \to +\infty}G(\gamma(t))$ exists. Furthermore, among all such rays only three limits are achieved: there exist $L_{0}, L_{\pm} \in \SO_{0}(2,2)$ such that
\[
    \lim_{t \to +\infty}G(\gamma(t))=\begin{cases}
                    L_{+} \ \ \ \ \ \ \text{if} \ \ \theta \in J_{+} \\
                    L_{0} \ \ \ \ \ \ \text{if} \ \ \theta \in J_{0} \\
                    L_{-} \ \ \ \ \ \ \text{if} \ \ \theta \in J_{-}
                    \end{cases}
\]
\end{lemma}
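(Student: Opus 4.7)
The plan is to view $G$ as the unique solution of a first-order matrix ODE along $\gamma$ and to extract convergence from a detailed balance between the exponential decay of $\Theta$ provided by Proposition \ref{prop:vortex}(ii) and the exponential growth of the conjugating frame $F_{0}$. Rewriting $G^{-1}dG=F_{0}\Theta F_{0}^{-1}$ along the ray gives the linear system
\[
(G\circ\gamma)'(t)=(G\circ\gamma)(t)\cdot\Xi(\gamma'(t)),\qquad \Xi=F_{0}\Theta F_{0}^{-1},
\]
whose path-ordered exponential converges as $t\to+\infty$ as soon as $\Xi$ is absolutely integrable along $\gamma$. To exploit the diagonal structure of the horospherical frame, write $F_{0}=A_{0}RDR^{-1}$ with $D=\diag(e^{2\Re w},e^{2\Im w},e^{-2\Re w},e^{-2\Im w})$ as in Section \ref{subsec:horo} and set $\hat\Theta=R^{-1}\Theta R$, so that
\[
\Xi=A_{0}R\,\bigl(D\hat\Theta D^{-1}\bigr)\,R^{-1}A_{0}^{-1},
\]
with $(i,j)$-entry equal to $e^{\mu_{i}-\mu_{j}}\hat\Theta_{ij}$, the $\mu_{k}$ running through $\{2\Re w,\,2\Im w,\,-2\Re w,\,-2\Im w\}$.

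By Proposition \ref{prop:vortex}(ii), each entry of $\Theta$ along a unit-speed ray in a natural coordinate is $O(e^{-2\sqrt{2}\,t}/\sqrt{t})$, while $e^{\mu_{i}-\mu_{j}}$ grows at a rate in $\{0,\pm 4\cos\theta,\pm 4\sin\theta,\pm 2(\cos\theta\pm\sin\theta)\}$. This rate exceeds the decay constant $2\sqrt{2}$ in exactly three positions, one per stable interval: the $(1,3)$-entry in $J_{0}$ (where $4\cos\theta>2\sqrt{2}$), the $(2,4)$-entry in $J_{+}$ (where $4\sin\theta>2\sqrt{2}$), and the $(4,2)$-entry in $J_{-}$. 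The central structural step is to diagonalise $U_{0}$ explicitly and verify that these three entries of $\hat\Theta$ vanish identically; once this is established, every surviving entry of $D\hat\Theta D^{-1}$ is pointwise $O(e^{(\alpha-2\sqrt{2})t}/\sqrt{t})$ with $\alpha<2\sqrt{2}$, yielding absolute integrability of $\Xi$ and hence convergence of the ODE along any stable ray.

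For the uniqueness of the limit within each interval, having obtained $G(\gamma(t))\to L(\gamma)\in\SO_{0}(2,2)$ I would compare two stable rays $\gamma,\gamma'$ with directions in the same $J_{\bullet}$ by integrating $\Xi$ along a circular arc of radius $t$ joining $\gamma(t)$ to $\gamma'(t)$. The arc has length $O(t)$ while $\Xi$ is pointwise $O(e^{(\alpha-2\sqrt{2})t}/\sqrt{t})$ with $\alpha<2\sqrt{2}$, so $G(\gamma(t))^{-1}G(\gamma'(t))\to\Id$ and all directions in $J_{\bullet}$ share a common limit $L_{\bullet}$. A stable quasi-ray $\gamma(t)=e^{i\theta}t+o(t)$ is then compared with its associated ray by the same mechanism: the super-exponential smallness of $\Xi$ makes the contribution of the $o(t)$-deviation negligible in the limit. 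The main obstacle is the matrix-algebraic verification that the three distinguished entries of $\hat\Theta$ vanish in their respective regimes; this is not a generic feature of $4\times 4$ conjugations but reflects the $\mathfrak{so}(2,2)$-structure of $\Theta$, namely that the components of $\Theta$ along the root directions whose eigenvalue gaps realise $\pm 4\cos\theta$ and $\pm 4\sin\theta$ vanish identically for a maximal embedding. Everything else — the ODE framework, the integrability estimate, and the interval-independence of the limit — is routine once this cancellation is identified.
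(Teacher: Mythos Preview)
Your proposal is correct and follows essentially the same route as the paper: reduce to the ODE $G^{-1}G'=F_{0}\Theta F_{0}^{-1}$, diagonalise $F_{0}$ by $A_{0}R$, and balance the exponential growth of the conjugating diagonal against the $O(e^{-2\sqrt{2}t}/\sqrt{t})$ decay of $\Theta$; then compare rays in the same interval by integrating along a transversal arc, and treat quasi-rays by the same comparison. The paper carries out the key vanishing of the $(1,3)$, $(2,4)$, $(4,2)$ (and $(3,1)$) entries of $\hat\Theta=R^{-1}\Theta R$ by explicit computation rather than by invoking the root-space structure of $\mathfrak{so}(2,2)$, and uses straight segments $\eta_{t}(s)=(1-s)\gamma_{1}(t)+s\gamma_{2}(t)$ instead of circular arcs, but these are cosmetic differences.
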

\begin{proof}First we consider rays, and at the end we show that quasi-rays have the same behaviour. Let $\gamma$ be a ray. For brevity we denote $G(t)=G(\gamma(t))$. We know that
\[
    G(t)^{-1}G'(t)=F_{0}(\gamma(t))\Theta_{\gamma(t)}(\dot{\gamma}(t))F_{0}^{-1}(\gamma(t)) \ .
\]
Since $F_{0}(w)=A_{0}R\diag(e^{2\Re(w)}, e^{2\Im(w)}, e^{-2\Re(w)}, e^{-2\Im(w)})R^{-1}$, for constant matrices $R$ and $A_{0}$, the asymptotic behaviour of $F_{0}(\gamma(t))\Theta_{\gamma(t)}(\dot{\gamma}(t))F_{0}^{-1}(\gamma(t))$ depends only on the action by conjugation by the diagonal matrix
\[
    D(t)=\diag(e^{2\Re(\gamma(t))}, e^{2\Im(\gamma(t))}, e^{-2\Re(\gamma(t))}, e^{-2\Im(\gamma(t))}) \ .
\]
A direct computation shows that $R^{-1}\Theta R$ is equal to 
\[ 
    \frac{e^{-i\theta}}{4}\scalebox{0.75}{$\begin{pmatrix}
    2(e^{u}+e^{-u}-2) & -2u_{\bar{w}}+(1-i)(e^u-e^{-u}) & 0 & -2u_{\bar{w}}+(1+i)(e^u-e^{-u}) \\
    -2u_{\bar{w}}-(1-i)(e^u-e^{-u}) & 2i(e^u+e^{-u}-2) & -2u_{\bar{w}}+(1+i)(e^u-e^{-u}) & 0 \\
    0 & -2u_{\bar{w}}-(1+i)(e^u-e^{-u}) & -2(e^{u}+e^{-u}-2) & -2u_{\bar{w}}-(1-i)(e^u-e^{-u})\\
    -2u_{\bar{w}}-(1+i)(e^u-e^{-u}) & 0 & -2u_{\bar{w}}+(1-i)(e^u-e^{-u}) & -2i(e^{u}+e^{-u}-2)
    \end{pmatrix}dt$} + 
\]
\[
     \frac{e^{i\theta}}{4}\scalebox{0.75}{$\begin{pmatrix}
    2(e^{u}+e^{-u}-2) & 2u_{w}-(1+i)(e^u-e^{-u}) & 0 & 2u_{w}-(1-i)(e^u-e^{-u}) \\
    2u_{w}+(1+i)(e^u-e^{-u}) & 2i(e^u+e^{-u}-2) & 2u_{w}-(1-i)(e^u-e^{-u}) & 0 \\
    0 & 2u_{w}+(1-i)(e^u-e^{-u}) & -2(e^{u}+e^{-u}-2) & 2u_{w}+(1+i)(e^u-e^{-u})\\
    2u_{w}+(1-i)(e^u-e^{-u}) & 0 & 2u_{w}-(1+i)(e^u-e^{-u}) & -2i(e^{u}+e^{-u}-2)
    \end{pmatrix}dt$}
\]
and conjugating by $D(t)$ multiplies the $(i,j)$-entry by
\[
    \lambda_{ij}=\exp\left(2t \left(\cos\left(\theta+\frac{(i-1)\pi}{2}\right)+\cos\left(\theta+\frac{(j-1)\pi}{2}\right)\right)\right)=O\left(e^{c(\theta)t}\right) \ , 
\]
where $c(\theta)$ achieves its maximum $2\sqrt{2}$ at $\theta=\pm\pi/4$ (here we have considered only the pairs $(i,j)$ so that $\lambda_{ij}$ multiplies a non-zero entry). Combining the bounds for $R^{-1}\Theta R$ and $\lambda_{ij}$, we find that for every stable ray, 
\[
    G(t)^{-1}G'(t)=O\left(\frac{e^{-\beta t}}{\sqrt{t}}\right)
\]
where $\beta=2\sqrt{2}-c(\theta)>0$. It is then standard to show that the limit $\lim_{t \to +\infty}G(t)$ exists (\cite[Lemma B.1]{DW}). \\
\indent Now suppose that $\gamma_{1}$ and $\gamma_{2}$ are stable rays with respective angles $\theta_{1}$ and $\theta_{2}$ that belong to the same interval. For any $t\geq 0$, let $\eta_{t}(s)=(1-s)\gamma_{1}(t)+s\gamma_{2}(t)$ be the constant-speed parameterisation of the segment from $\gamma_{1}(t)$ to $\gamma_{2}(t)$. Let $g_{t}(s)=G(\eta_{t}(0))^{-1}G(\eta_{t}(s))$, which satisfies
\[
    \begin{cases} g_{t}^{-1}(s)g'_{t}(s)=F_{0}(\eta_{t}(s))\Theta_{\eta_{t}(s)}(\dot{\eta}_{t}(s))F_{0}^{-1}(\eta_{t}(s)) \\
                    g_{t}(0)=Id \\
                    g_{t}(1)=G_{1}(t)^{-1}G_{2}(t)
    \end{cases} \ ,
\]
where $G_{i}(t)=G(\gamma_{i}(t))$ for $i=1,2$. Since $|\dot{\eta}_{t}(s)|=O(t)$, the analysis above shows that 
\[
    g_{t}^{-1}(s)g_{t}'(s)=O\left(\sqrt{t}e^{-\beta t}\right) \ ,
\] 
where $\beta=2\sqrt{2}-\sup\{ c(\theta) \ | \ \theta_{1} \leq \theta \leq \theta_{2} \}$. In particular, by making $t$ large we can arrange $g_{t}^{-1}(s)g_{t}'(s)$ to be uniformly small for all $s \in [0,1]$. ODE methods (\cite[Lemma B.1]{DW}) ensure that
\[
    g_{t}(1)=G_{1}^{-1}(t)G_{2}(t) \to Id \ \ \ \text{as} \ \ t \to +\infty
\]
This shows that $G$ has the same limit along $\gamma_{1}$ and $\gamma_{2}$. \\
\indent Finally, suppose that $\gamma_{1}$ is a stable quasi-ray, and $\gamma_{2}$ is the ray that it approximates. If we consider as above $\eta_{t}(s)=(1-s)\gamma_{1}(t)+s\gamma_{2}(t)$, we have an even stronger bound on the derivative $|\dot{\eta}_{t}(s)|=o(\sqrt{t})$, hence we can conclude as before that $G(\gamma_{1}(t))$ has the same limit as $G(\gamma_{2}(t))$.
\end{proof}

Next we analyse the behaviour near unstable rays in order to understand the relationship between $L_{\pm}$ and $L_{0}$.

\begin{lemma}\label{lm:unipotent} Let $L_{\pm}$ and $L_{0}$ be as in the previous lemma. Then there exist unipotent matrices $U_{\pm}$ such that
\[
    L_{-}^{-1}L_{0}=A_{0}RU_{-}R^{-1}A_{0}^{-1} \ \ \ \ \ \text{and} \ \ \ \ \ L_{0}^{-1}L_{+}=A_{0}RU_{+}R^{-1}A_{0}^{-1}
\]
\end{lemma}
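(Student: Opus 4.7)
The plan is to realise each of $L_0^{-1}L_+$ and $L_-^{-1}L_0$ as the limiting holonomy of the ODE $G^{-1}dG = F_0\Theta F_0^{-1}$ along a short circular arc crossing the corresponding unstable direction ($\theta = \pi/4$ for the first, $\theta = -\pi/4$ for the second). I describe the argument for $L_0^{-1}L_+$; the other case will be symmetric.

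For fixed small $\epsilon>0$ and large $T>0$, I would consider the arc $\eta_T(\theta) = Te^{i\theta}$, $\theta\in[\pi/4-\epsilon,\pi/4+\epsilon]$, in the natural coordinate of the half-plane $(U,w)$. Since $\pi/4-\epsilon\in J_0$ and $\pi/4+\epsilon\in J_+$, the previous lemma yields
\[
L_0^{-1}L_+ = \lim_{T\to+\infty} g_T(\pi/4+\epsilon),
\]
where $g_T(\theta) = G(\eta_T(\pi/4-\epsilon))^{-1}G(\eta_T(\theta))$ satisfies $g_T(\pi/4-\epsilon)=\Id$ and $g_T^{-1}g_T' = F_0(\eta_T)\Theta_{\eta_T}(\dot\eta_T)F_0^{-1}(\eta_T)$. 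Using $F_0 = A_0 R\, D\, R^{-1}$ with $D(w) = \diag(e^{2\Re w}, e^{2\Im w}, e^{-2\Re w}, e^{-2\Im w})$, this becomes
\[
g_T^{-1}g_T' = (A_0R)\bigl[D(\eta_T)\,\tilde\Theta_{\eta_T}(\dot\eta_T)\,D(\eta_T)^{-1}\bigr](A_0R)^{-1},
\]
with $\tilde\Theta = R^{-1}\Theta R$. Conjugation by $D$ multiplies the $(i,j)$-entry of $\tilde\Theta$ by $\exp((d_i-d_j)T)$, where $(d_1,d_2,d_3,d_4)(\theta) = (2\cos\theta, 2\sin\theta, -2\cos\theta, -2\sin\theta)$. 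At $\theta=\pi/4$ the maximum of $d_i-d_j$ equals $2\sqrt{2}$ and is attained precisely on the entries $(i,j)\in\{1,2\}\times\{3,4\}$. Combined with the estimate $\tilde\Theta = O(e^{-2\sqrt{2}T}/\sqrt{T})\,dt$ from Proposition \ref{prop:vortex}(ii), these entries contribute bounded, non-vanishing terms, whereas all other entries of $D\tilde\Theta D^{-1}$ decay to zero uniformly on the arc as $T\to+\infty$.

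The algebraic payoff is that $\{E_{ij} : (i,j)\in\{1,2\}\times\{3,4\}\}$ spans an abelian, hence nilpotent, subspace of $\mathfrak{sl}(4,\C)$ (the bracket $[E_{ij},E_{kl}]$ vanishes when $i,k\in\{1,2\}$ and $j,l\in\{3,4\}$), and its intersection with $\mathfrak{so}(2,2)$ is the nilradical $\mathfrak{n}_+$ of the parabolic subalgebra stabilising the isotropic plane spanned by the first two $R$-basis vectors. Hence $g_T^{-1}g_T'$ converges uniformly in $\theta$ to an $\mathrm{Ad}(A_0R)\mathfrak{n}_+$-valued $1$-form, and the ODE comparison argument of the previous lemma (cf.\ \cite[Lemma B.1]{DW}) produces $g_T(\pi/4+\epsilon) \to A_0R\exp(N_+)R^{-1}A_0^{-1}$ for some $N_+\in\mathfrak{n}_+$, giving $L_0^{-1}L_+ = A_0R\,U_+\,R^{-1}A_0^{-1}$ with $U_+=\exp(N_+)$ unipotent.

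The main technical obstacle is to justify rigorously the interchange of $\lim_{T\to\infty}$ with the path-ordered exponential: one needs uniform (in $\theta$) control on the off-block entries of $D\tilde\Theta D^{-1}$ and a dominated-convergence-style bound, both of which follow from the sharp decay in Proposition \ref{prop:vortex}. The case $L_-^{-1}L_0$ is handled by the same argument at $\theta=-\pi/4$, where the maximum $2\sqrt{2}$ of $d_i-d_j$ is now attained on $(i,j)\in\{(1,2),(1,3),(4,2),(4,3)\}$; by the same bracket calculation this spans the opposite nilradical $\mathfrak{n}_-$, and one obtains $L_-^{-1}L_0 = A_0R\,U_-\,R^{-1}A_0^{-1}$ with $U_-$ unipotent.
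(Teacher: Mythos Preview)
Your overall strategy---connect two stable rays by a circular arc of radius $T$ crossing the unstable direction, analyse the ODE for $g_T$ in the $R$-diagonalising frame, and identify the surviving entries as lying in an abelian nilpotent block---is exactly the paper's approach. The gap is in your analytic estimate.

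You claim the dominant entries of $D\tilde\Theta(\dot\eta_T)D^{-1}$ are \emph{bounded} and that $g_T^{-1}g_T'$ \emph{converges uniformly in $\theta$}. This is false. Along the arc $|\dot\eta_T|=T$, so the coefficients of $\tilde\Theta(\dot\eta_T)$ are $O(\sqrt{T}\,e^{-2\sqrt{2}T})$; after conjugation by $D$ the maximal entries become $O(\sqrt{T})$, not $O(1)$. What saves the argument is that the conjugation factor satisfies a second-order bound
\[
\exp\bigl((d_i-d_j)T\bigr)\le \exp\!\bigl(2\sqrt{2}\,T-(\theta\mp\tfrac{\pi}{4})^{2}T\bigr),
\]
so the dominant entry is bounded by a Gaussian in $\theta$ with $L^{1}$-norm independent of $T$. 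One therefore needs an $L^{1}$-type ODE lemma (this is \cite[Lemma B.2]{DW}, not the $L^{\infty}$ Lemma B.1 you cite) to conclude that $g_T$ at the far endpoint converges to $A_0R\exp(N)R^{-1}A_0^{-1}$. Your ``dominated-convergence-style bound'' remark gestures at this, but the explicit uniform-convergence claim contradicts it; the Gaussian concentration is the actual mechanism and should be stated.

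A minor point: of the four positions you list in $\{1,2\}\times\{3,4\}$ (for $\theta=\pi/4$) and in $\{1,4\}\times\{2,3\}$ (for $\theta=-\pi/4$), two are identically zero in $R^{-1}\Theta R$ (the $(1,3)$ and $(2,4)$ entries, respectively $(1,3)$ and $(4,2)$). This does not affect unipotency, but it does mean each $U_{\pm}$ has only two nontrivial off-diagonal entries, which the paper uses later.
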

\begin{proof} We give the detailed proof for $L_{-}^{-1}L_{0}$, for the other case we only underline the differences at the end. Consider the rays $\gamma_{-}(t)=e^{-i\pi/3}t$ and $\gamma_{0}(t)=t$. By the previous lemma $G_{-}(t)=G(\gamma_{-}(t))$ and $G_{0}(t)=G(\gamma_{0}(t))$ have limit $L_{-}$ and $L_{0}$, respectively. For any $t>0$, we join $\gamma_{-}(t)$ and $\gamma_{0}(t)$ by an arc
\[
    \eta_{t}(s)=e^{is}t \ , \ \ \ \text{where} \ s \in [-\pi/3, 0] \ .
\]
Let $g_{t}(s)=G(\eta_{t}(-\pi/3))^{-1}G(\eta_{t}(s))$. Then $g_{t}:[-\pi/3, 0]\rightarrow \SO_{0}(2,2)$ satisfies the differential equation
\begin{equation}\label{eq:ODE}
    \begin{cases} g_{t}^{-1}(s)g'_{t}(s)=F_{0}(\eta_{t}(s))\Theta_{\eta_{t}(s)}(\dot{\eta}_{t}(s))F_{0}^{-1}(\eta_{t}(s)) \\
                    g_{t}(-\pi/3)=Id \\
                    g_{t}(0)=G_{-}(t)^{-1}G_{0}(t)
    \end{cases} \ .
\end{equation}  
Unlike the previous case, the coefficient  
\[
    M_{t}(s)=D(\eta_{t}(s))R^{-1}\Theta_{\eta_{t}(s)}(\dot{\eta}_{t}(s))RD(\eta_{t}(s))^{-1} 
\]
is not exponentially small in $t$ throughout the interval. At $s=-\pi/4$, conjugation by $D(\eta_{t}(-\pi/4))$ multiplies the $(1,2)$-entry and the $(4,3)$-entry by a factor $\exp(2\sqrt{2}t)$, exactly matching the decay rate of $R^{-1}\Theta R$ and giving
\[
    M_{t}(-\pi/4)=O\left(\frac{|\dot{\eta}_{t}(-\pi/4)|}{\sqrt{t}}\right)=O(\sqrt{t})
\]
because $|\dot{\eta}_{t}(0)|=t$. However, this growth is seen only in the $(1,3)$-entry and in the $(4,3)$-entry because all the others are scaled by a smaller exponential factor. Moreover, for $\theta \in [-\pi/3, 0]$ we have
\[
    \lambda_{12}=\lambda_{43}=\exp(2t(\cos\theta-\sin\theta))\leq \exp\left(2\sqrt{2}t-\left(\theta+\frac{\pi}{4}\right)^{2}t\right) \ ,
\]
thus we can separate the unbounded entry in $M_{t}(s)$ and write
\[
    M_{t}(s)=M_{t}^{0}(s)+\mu_{t}(s)(E_{12}+E_{43})
\]
where $M_{t}^{0}(s)=O(e^{-\beta t})$ for some $\beta>0$, $E_{12}$ and $E_{43}$ are the elementary matrices, and
\[
    \mu_{t}(s)=O\left(|\dot{\eta}_{t}(s)|\exp(-2\sqrt{2}t)\lambda_{12}\right)=O\left(\sqrt{t}e^{-(\theta+\pi/4)^{2}t}\right) \ .
\]
This upper-bound is a Gaussian function centered at $\theta=-\pi/4$, normalised such that its integral is independent of $t$. Therefore, the function $\mu_{t}(s)$ is uniformly absolutely integrable over $s \in [-\pi/3, 0]$ as $t \to +\infty$. Now, under this condition the solution to the initial value problem (\ref{eq:ODE}) satisfies (\cite[Lemma B.2]{DW})
\[
    \left\| g_{t}(\pi/4)-A_{0}R\exp\left((E_{12}+E_{43})\int_{-\frac{\pi}{4}}^{\frac{\pi}{4}}\mu_{t}(s)\right)R^{-1}A_{0}^{-1}\right\| \to 0 \ \ \text{as} \ \ t \to +\infty \ .
\]
Since $g_{t}(0)=G(\gamma_{-}(t))^{-1}G(\gamma_{0}(t)) \to L_{-}^{-1}L_{0}$, this gives the desired unipotent form. \\
The proof for $L_{0}^{-1}L_{+}$ follows the same line with the only difference given by the fact that at $\theta=\pi/4$, the leading term in the matrix $M_{t}(s)$ lies in the $(1,4)$-entry.
\end{proof}

We can now describe the boundary at infinity of the maximal surface $S$. If $\gamma$ is a stable ray in $U$ and we denote
$L_{\gamma}=\lim_{t \to +\infty}G(\gamma(t))$, then, since $F(\gamma(t))=G(\gamma(t))F_{0}(\gamma(t))$, $f(\gamma(t))$ tends to the point $p_{\gamma}$ in the boundary at infinity of $S$ that can be expressed as $p_{\gamma}=L_{\gamma}v_{\gamma}$ (see Table \ref{table:1}). Therefore, we obtain the following limit points along stable directions in the standard half-plane $U$:

\begin{table}[!h]
\begin{center}
\begin{tabular}{l l l}
\hline
\textbf{Type of path $\gamma$} & \textbf{Direction $\theta$} & \textbf{Projective limit $p_\gamma$ of $f(\gamma)$}\\
\hline
Quasi-ray& $\theta \in (-\tfrac{\pi}{2}, -\tfrac{\pi}{4})$ & $p_\gamma = L_{-}[0,-1,0,1]$\\
Quasi-ray& $ \theta \in (-\tfrac{\pi}{4}, \tfrac{\pi}{4})$ & $p_\gamma = L_{0}[1,0,1,0]$\\
Quasi-ray& $\theta\in (\tfrac{\pi}{4}, \tfrac{\pi}{2})$  & $p_\gamma = L_{+}[0,1,0,1]$\\
\hline\\
\end{tabular}
\end{center}
\caption{Limits of a polynomial maximal surface along rays}\label{table:2}
\end{table}

A direct computation, using the formulas provided by Lemma \ref{lm:unipotent}, shows that
\[
    L_{0}[0,1,0,1]=L_{+}[0,1,0,1] \ \ \ \ \text{and} \ \ \ \ 
    L_{0}[0,-1,0,1]=L_{-}[0,-1,0,1] \ .
\]
Moreover, along each unstable direction $\theta=\pm \pi/4$, the surface $S=f(\C)$ is asymptotic to the two horospherical surfaces $L_{\pm}f_{0}(\C)$ and $L_{0}f_{0}(S)$. By the same argument as above, 
\[
    L_{0}[1,s,1,s]=L_{+}[1,s,1,s] \ \ \ \ \text{and} \ \ \ \ 
    L_{0}[s,-1,s,1]=L_{-}[s,-1,s1] \ \ \ 
\]
for every $s \in \R$, hence the two horospherical surfaces which are asymptotic to $S$ in each unstable direction share one light-like segment. We deduce that the boundary at infinity of $S$ in each standard half-plane consists of two light-like segments forming the "vee" given by
\[
    L_{0}([0,1,0,1] \cup [1,s,1,s] \cup [1,0,1,0] \cup [s,-1,s,1] \cup [0,-1,0,1])  \ .
\]
Given two consecutive standard half-planes $U_{i}$ and $U_{i+1}$ we obtain two "vees" that share an extreme vertex: in fact, by considering an other standard $q$-upper-half plane $W$ that intersects $U_{i}$ and $U_{i+1}$ in a sector of angle $\pi/2$, the same argument as above shows that the direction $\pi/2$ is stable, so the ending point of the "vee" in $U_{i}$ is the same as the starting point of the "vee" in $U_{i+1}$. Allowing $i$ to vary, we assemble a map
\[
    \Gamma: \Delta_{2(k+2)} \rightarrow \partial_{\infty}S\cong S^{1}
\]
where $\Delta_{2(k+2)}$ is an abstract $1$-simplicial complex with $2(k+2)$-vertices homeomorphic to a circle. By construction $\Gamma$ is linear on each edge and its restriction to any pair of adjacent edges is an embedding, since they are mapped to segments belonging to different foliations of $Ein^{1,1}$. Therefore, $\Gamma$ is a local homeomorphism of compact, connected Hausdorff spaces, hence it is a covering. The boundary at infinity of $S$ is thus a light-like polygon and we can consider the covering as simplicial. We are only left to prove that $\Gamma$ is injective. \\
\indent We first recall that in each standard $q$-half plane $U_{i}$ we can find a quasi-ray $\gamma_{i}$ converging to the vertex in the centre of the "vee" (see Table \ref{table:2}). In addition, by the previous remark, two such quasi-rays $\gamma_{i}$ and $\gamma_{i+1}$ in consecutive standard half-planes cannot have the same limit point. \\
\indent Suppose now by contradiction that $\Gamma$ is not injective. Then the map $\Gamma$ is not injective on vertices, and we can find two distinct quasi-rays $\gamma_{i}$ and $\gamma_{j}$ that limit to the same vertex $v$ in the boundary at infinity of $S$. Necessarily $i \neq j\pm 1 \ (\text{mod} (k+2))$. Let us complete the curve $\gamma_{i}\cup \gamma_{j}$ to a simple curve $\beta$ that disconnects $\C$ into two connected components and does not meet the other standard $q$-half planes $U_{l}$ for $l \neq i,j$. Now, the quasi-rays $\gamma_{j-1}$ and $\gamma_{j+1}$ belong to two different components of $\C \setminus \beta$ and do not limit to the vertex $v$, because they are both neighbours of $\gamma_{j}$. Thus each component of $f(\C \setminus \beta)$ accumulates on at least one boundary point of $S$ that is different from $v$. This is a contradiction, because $f(\beta)$ is a properly embedded path in $S$ that limits on a single boundary point $v$ in both directions, so one of its complementary discs has $v$ as the only limit point on $\partial_{\infty}S$.\\
The proof of Proposition \ref{prop:general} is now complete. 

\subsection{The geometry of a polynomial maximal surfaces}\label{subsec:geo}
We conclude this section with some remarks about the geometry of the maximal surfaces that we have constructed. 

\begin{prop}Let $S$ be a polynomial maximal surface in $AdS_{3}$ with second fundamental form $2\Re(q)$. Then $S$ with its induced metric is quasi-isometric to $\C$ endowed with the flat metric with cone singularities $|q|$.
\end{prop}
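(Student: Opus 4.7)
The plan is to prove that the identity map $\mathrm{id}:(\C,|q|)\to(\C,I)$, where $I=2e^{2u}|dz|^{2}$ is the induced metric on the polynomial maximal surface $S$, is a quasi-isometry. The whole argument rests on the pointwise and asymptotic estimates for the conformal factor $u$ provided by Proposition \ref{prop:vortex}. The lower bound is immediate: since $u\geq\frac{1}{2}\log|q|$, we have $e^{2u}\geq |q|$ pointwise, so $I\geq 2|q|$ as Riemannian tensors and $d_{I}(p,q)\geq \sqrt{2}\,d_{|q|}(p,q)$ for every $p,q\in\C$.

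For the upper bound I would use the decay estimate $u-\frac{1}{2}\log|q|=O(e^{-2\sqrt{2}r}/\sqrt{r})$, which shows that $e^{2u}/|q|\to 1$ as the $|q|$-distance $r$ from the zeros of $q$ tends to infinity. I would choose a compact neighborhood $K\subset\C$ of the zeros large enough that $\frac{1}{2}|q|\leq e^{2u}\leq 2|q|$ on $\C\setminus K$; on that region the metrics $I$ and $|q|$ are uniformly bi-Lipschitz. On the compact set $K$ both metrics have finite diameter, say $D_I$ and $D_{|q|}$, since $I$ is smooth and $|q|$ is a flat metric with finitely many cone singularities. Given $p,q\in\C$, I would take a $|q|$-geodesic $\gamma$ from $p$ to $q$ and, on every maximal subsegment of $\gamma$ lying in $K$, replace it by an $I$-shortest path between its endpoints, of $I$-length at most $D_I$; the bi-Lipschitz comparison bounds the $I$-length of the remaining pieces by twice their $|q|$-length. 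This gives $d_{I}(p,q)\leq 2\,d_{|q|}(p,q)+N\cdot D_I$, with $N$ the number of components of $\gamma\cap K$.

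The main obstacle is to keep $N$ uniformly bounded in $p$ and $q$. I would address this by enlarging $K$ to be $|q|$-convex: away from the zeros, the $|q|$-metric is flat and $|q|$-geodesics are straight lines in natural coordinates, so a sufficiently large set whose intersection with each of the $d+2$ standard half-plane charts of Section \ref{sec:polyno} is convex is entered by any $|q|$-geodesic in at most a bounded number of components, depending only on the degree $d$. Combining this with the two comparisons above yields constants $C_1,C_2>0$ such that $C_1^{-1}d_{|q|}(p,q)-C_2\leq d_I(p,q)\leq C_1\,d_{|q|}(p,q)+C_2$ for all $p,q\in\C$, which is the desired quasi-isometry.
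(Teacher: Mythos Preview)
Your argument is essentially correct, but it is considerably more work than the paper does, and the extra work is self-inflicted in two places.

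First, for the upper bound you reach for the asymptotic estimate (ii) of Proposition~\ref{prop:vortex}, while the paper simply uses the global pointwise bound from item (i): $u\leq\tfrac{1}{2}\log(|q|+C)$. This immediately yields
\[
2|q|\;\leq\; I=2e^{2u}|dz|^{2}\;\leq\;2(|q|+C)\,|dz|^{2}
\]
everywhere on $\C$, not just outside a compact set. From this sandwich the quasi-isometry with $|q|$ is declared to follow. The compact set on which the two metrics fail to be bi-Lipschitz is just $\{|q|<C\}$, and no decay rate is needed to identify it.

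Second, the difficulty you create for yourself in bounding the number $N$ of components of $\gamma\cap K$ is unnecessary. Once you know the metrics are bi-Lipschitz outside a compact $K$ and both have finite diameter on $K$, you do not need any convexity of $K$: given a $|q|$-geodesic $\gamma$ from $p$ to $p'$, let $a$ be its \emph{first} entry point into $K$ and $b$ its \emph{last} exit point. The portions $\gamma|_{[p,a]}$ and $\gamma|_{[b,p']}$ lie in $\C\setminus K$, so their $I$-length is controlled by their $|q|$-length, and $d_{I}(a,b)\leq D_{I}$. This already gives $d_{I}(p,p')\leq C_{1}\,d_{|q|}(p,p')+D_{I}$ with a single additive constant, regardless of how many times $\gamma$ revisits $K$ in between. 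Your attempt to make $K$ $|q|$-convex via the half-plane decomposition is plausible but delicate (a $|q|$-geodesic can in principle revisit a standard half-plane after winding around cone points of angle $>2\pi$), and, as just noted, it is not needed.
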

\begin{proof} The induced metric on $S$ can be written as $I=2e^{2u}|dz|^{2}$, where $u:\C \rightarrow \R$ is the solution to Equation (\ref{eq:PDE}). By Proposition \ref{prop:vortex}, we know that
\[
    \frac{1}{2}\log(|q|) \leq u \leq \frac{1}{2}\log(|q|+C)
\]
for some positive constant $C>0$. Therefore, 
\[
    2|q| \leq 2e^{2u}|dz|^{2}\leq 2(|q|+C)
\]
and the claim follows.
\end{proof}

\begin{prop}\label{prop:princ_curv}Let $S$ be a polynomial maximal surface in $AdS_{3}$ with second fundamental form $2\Re(q)$. Then the positive principal curvature of $S$ is in $[0,1)$ and tends to $1$ for $|z| \to +\infty$.
\end{prop}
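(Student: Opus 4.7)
My plan is to work in the global conformal coordinate on $\C$ in which the induced metric is $I = 2e^{2u}|dz|^2$ and the second fundamental form is $II = 2\Re(q) = q(z)dz^2 + \overline{q(z)}d\bar z^2$, where $u$ is the unique solution of the vortex equation \eqref{eq:PDE} supplied by Proposition \ref{prop:vortex}. A straightforward computation of $B = I^{-1}II$ in the real coordinates $z = x+iy$ shows that $B$ is traceless (as expected from maximality) with $\det B = -|q|^2 e^{-4u}$, so the two principal curvatures are $\pm \lambda$, with the positive one given explicitly by
\[
    \lambda(z) = |q(z)|\, e^{-2u(z)}.
\]
All of the claimed bounds then reduce to pointwise analysis of this single function, using the estimates of Proposition \ref{prop:vortex}.

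The weak upper bound is immediate: since $u \geq \tfrac{1}{2}\log|q|$ by Proposition \ref{prop:vortex}(i), I get $e^{2u} \geq |q|$ and hence $\lambda \leq 1$ everywhere. The companion bound $u \leq \tfrac{1}{2}\log(|q|+C)$ yields $\lambda \geq |q|/(|q|+C)$. Because $q$ is a polynomial of positive degree, $|q(z)| \to +\infty$ as $|z| \to \infty$, so this lower bound forces $\lambda(z) \to 1$, establishing the asymptotic assertion. At the zeros of $q$ we also trivially have $\lambda = 0$.

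The delicate point is strict inequality $\lambda < 1$. I plan to set $v = u - \tfrac{1}{2}\log|q|$ on $\Omega := \C \setminus Z(q)$, which is connected since $Z(q)$ is finite. Proposition \ref{prop:vortex}(i) says $v \geq 0$ on $\Omega$, and because $\log|q|$ is harmonic on $\Omega$, substituting $u = v + \tfrac12 \log|q|$ into \eqref{eq:PDE} gives
\[
    \Delta v \;=\; 4|q|\sinh(2v).
\]
Writing $\sinh(2v) = 2v\cdot g(v)$ with $g$ positive and continuous on $[0,\infty)$, this becomes a linear elliptic equation $-\Delta v + c(z)v = 0$ with $c(z) = 8|q(z)|g(v(z)) \geq 0$. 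If $v(z_0) = 0$ at an interior point $z_0 \in \Omega$, then $v$ attains a non-positive minimum in the interior of $\Omega$, and the strong maximum principle for such operators forces $v \equiv 0$ on $\Omega$. But smoothness of $u$ at any $z_* \in Z(q)$ forces $v(z) = u(z) - \tfrac{1}{2}\log|q(z)| \to +\infty$ as $z \to z_*$, a contradiction. Hence $v > 0$ on $\Omega$, so $\lambda = e^{-2v} < 1$ there, and combined with $\lambda(z_*) = 0$ at zeros of $q$ we conclude $\lambda \in [0,1)$ everywhere. The main technical point to verify is the applicability of the strong maximum principle to this semilinear equation; the linearization just described makes this routine, since $c(z)$ is non-negative and locally bounded, placing us within the scope of the classical Hopf strong maximum principle.
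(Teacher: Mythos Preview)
Your argument is correct and follows the same outline as the paper: both compute $\lambda = |q|\,e^{-2u}$ from $\det B = -e^{-4u}|q|^{2}$ and then read off the bounds from the vortex estimates of Proposition~\ref{prop:vortex}. For the limit $\lambda \to 1$ you use $\lambda \geq |q|/(|q|+C)$, while the paper instead writes $1 \leq e^{-2u}(|q|+C) = \lambda + Ce^{-2u}$ and lets $e^{-2u}\to 0$; these are the same estimate rearranged.

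The one genuine difference is the strict inequality $\lambda<1$. The paper simply invokes Proposition~\ref{prop:vortex} to assert $|q|<e^{2u}$, but as stated that proposition only gives $|q|\leq e^{2u}$; the strict version is implicit in the cited references. Your strong maximum principle argument (passing to $v=u-\tfrac12\log|q|$, which solves $\Delta v = 4|q|\sinh(2v)$, and ruling out $v\equiv 0$ via the blow-up at zeros of $q$) makes this step self-contained and is a nice addition. Note that your argument genuinely requires $\deg q\geq 1$: for constant $q$ one has $v\equiv 0$ and $\lambda\equiv 1$ (the horospherical surface), so the proposition should be read in that context --- which you do assume explicitly, while the paper leaves it tacit.
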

\begin{proof} By definition of $q$, the second fundamental form of $S$ is $II=2\Re(q)$. Let us denote with $\lambda$ the positive principal curvature of $S$. Then
\[
    -\lambda^{2}=\det(B)=\det(I^{-1}II)=\det(e^{-2u}\Re(q))=-e^{-4u}|q|^{2} \ .
\]
By Proposition \ref{prop:vortex}, we know $|q|<e^{2u}$, hence $\lambda<1$. \\
On the other hand, when $|z|\to +\infty$, the function $u$ diverges and the inequalities
\[
    1\leq e^{-2u}(|q|+C)=\lambda+e^{-2u}C\leq 1+e^{-2u}
\]
give the desired result. 
\end{proof}

\section{Proof of the main result}\label{sec:mainthm}
The discussion in the previous section enables us to define a map
\[
    \alpha: \mathcal{MQ}_{k} \rightarrow \mathcal{MLP}_{2(k+2)}
\]
by sending the equivalence class of a polynomial quadratic differential $[q]$ on the complex plane to the boundary at infinity of a maximal embedding $\sigma:\C \rightarrow AdS_{3}$ with second fundamental form $2\Re(q)$. This does not depend on the choice of the representative $q$, because if $q'$ is equivalent to $q$, then there exists an automorphism $T$ of $\C$ such that $T_{*}q=q'$. Therefore, the embedding $\sigma'=\sigma \circ T:\C \rightarrow AdS_{3}$ has second fundamental form $2\Re(q')$ and the boundary at infinity does not change. The main aim of this section is proving that $\alpha$ is a homeomorphism.\\
\\
\indent Let us first point out that $\alpha$ can be lifted to 
\[
    \tilde{\alpha}: \mathcal{TQ}_{k} \rightarrow \mathcal{TP}_{k+2}\times \mathcal{TP}_{k+2}
\]
sending a monic and centered polynomial quadratic differential of degree $k$ (see Proposition \ref{prop:moduli_poly}) to the two marked $(k+2)$-uples of points in $\R\Pp^{1}$ corresponding to the points of discontinuity and the images of the function $f:\R\Pp^{1} \rightarrow \R\Pp^{1}$ associated to the light-like polygon (see Proposition \ref{prop:moduli_polygo}).\\
\indent In order to see this, we need to understand how to encode the action of the finite group $\Z_{k+2}$. Given a monic polynomial quadratic differential $q$ in $\C$ of degree $k$, there are $k+2$ canonical directions corresponding to the set
\[
    D=\left\{ z \in \C \ | \arg(z)=\frac{2\pi j}{k+2} \right\} \ .
\]
Those can be understood as follows, if $q=z^{k}dz^{2}$, these are exactly the directions in which the quadratic differential takes positive real values; in the general case, these directions are characterised by the fact that they are contained eventually in a unique standard $q$-half-plane, where they correspond to quasi-rays with angle $0$. If we fix one direction $\theta_{0}=\arg(z_{0})$ with $z_{0} \in D$, we can see the action of the cyclic group $\Z_{k+2}$ as a rotation in this set. \\
\indent Let $\sigma:\C \rightarrow AdS_{3}$ be a maximal embedding associated to $q$. Let $\Delta$ denote the light-like polygon in the boundary at infinity of $S=\sigma(\C)$, and let $f_{\Delta}:\R\Pp^{1} \rightarrow \R\Pp^{1}$ be the corresponding upper-semicontinuous, locally constant, orientation-preserving function. Let us denote with $P$ and $Q$ the set of points of discontinuity and the images of $f_{\Delta}$. The direction $\theta_{0}$ gives a marking on $P$ and $Q$, that is a preferred point in each of them, in the following way. The path $\sigma(e^{i\theta_{0}}t)$ converges to a point in $\Delta$ as $t \to +\infty$. By the discussion in the previous section, the limit point is a vertex $v \in \Delta$. Its left projection gives a point $\pi_{l}(v) \in P$. The markings in the sets $P$ and $Q$ are given by selecting $p_{0}=\pi_{l}(v) \in P$ and $q_{0}=f_{\Delta}(p) \in Q$. We define
\begin{align*}   
    \tilde{\alpha}: \mathcal{TQ}_{k} &\rightarrow \mathcal{TP}_{k+2}\times \mathcal{TP}_{k+2}\\
                        q &\mapsto ((P,p_{0}),(Q,q_{0})) \ .
\end{align*}
If we change $\sigma$ to $\sigma'$ by post-composing with an isometry $A$ of $AdS_{3}$, the boundary at infinity is $\Delta'=A(\Delta)$, hence the corresponding pairs $(P',p_{0}')$ and $(Q', q_{0}')$ are equivalent to $(P,p_{0})$ and $(Q,q_{0})$.\\
\indent If we change $\sigma$ by pre-composing with the generator of the $\Z_{k+2}$-action $T(z)=\zeta_{k+2}^{-1}z$, then $\sigma'=\sigma \circ T$ is a maximal embedding with second fundamental form $2\Re(T_{*}q)$. Its boundary at infinity remains $\Delta$, but the limit point of the ray $\sigma'(e^{i\theta_{0}}t)$ changes to $v'$, which coincides with the limit point of the ray $\sigma(e^{i(\theta_{0}+2\pi/(d+2))}t)$. By the description of the limit points along rays given in Section \ref{subsec:rays}, the markings $p_{0} \in P$ and $q_{0} \in Q$ change to their successors $p_{1}$ and $q_{1}$. Therefore, $\tilde{\alpha}$ is $\Z_{k}$-equivariant.\\
\indent Finally, $\tilde{\alpha}$ is well-defined, because if $T_{*}^{j}(q)=q$ for some $j=1, \dots, k+2$, then $\sigma$ and $\sigma'=\sigma \circ T^{j}$ are maximal embeddings of $\C$ with the same embedding data, hence they differ by post-composition with an isometry. Then $\tilde{\alpha}(q)=\tilde{\alpha}(T_{*}^{j}(q))$.

\begin{prop}\label{prop:cont}The map $\tilde{\alpha}$ is continuous.
\end{prop}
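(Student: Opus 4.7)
The plan is to fix a sequence $q_n \to q$ in $\mathcal{TQ}_{k}$ (monic centered representatives with convergent coefficients) and show that $\tilde{\alpha}(q_n) \to \tilde{\alpha}(q)$ in $\mathcal{TP}_{k+2} \times \mathcal{TP}_{k+2}$. The argument naturally splits into two stages: first, proving that the associated maximal embeddings $\sigma_n:\C \to AdS_3$ converge to $\sigma$ locally uniformly; second, deducing that their boundaries at infinity converge as marked $(k+2)$-tuples in $\R\Pp^{1}\times \R\Pp^{1}$. Stage one rests on the analysis of the vortex equation \eqref{eq:PDE}, while stage two is driven by the asymptotic machinery of Section \ref{subsec:rays}.

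For the first stage, let $u_n:\C \to \R$ denote the unique solution to \eqref{eq:PDE} with inhomogeneity $|q_n|^2$. Since $q_n \to q$ in coefficients, $|q_n| \to |q|$ locally uniformly, and the two-sided bounds of Proposition \ref{prop:vortex}(i) with constants $C, M$ depending continuously on coefficients show that $\{u_n\}$ is uniformly bounded on each compact subset of $\C$. Standard elliptic bootstrapping applied to the semilinear equation \eqref{eq:PDE} then upgrades this to uniform $C^{\infty}_{\mathrm{loc}}$ bounds, and the uniqueness part of Proposition \ref{prop:vortex} forces $u_n \to u$ in $C^{\infty}_{\mathrm{loc}}$. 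Substituting $(u_n, q_n)$ into the formulas for $U, V$ produces $\mathfrak{so}(2,2)$-valued $1$-forms $\omega_n \to \omega$ locally uniformly; integrating from a common initial value at $0 \in \C$ gives frame fields $F_n \to F$ in $C^{\infty}_{\mathrm{loc}}$, whence $\sigma_n \to \sigma$ on compact subsets, together with their tangent planes and unit normals.

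For the second stage, recall from Section \ref{subsec:rays} that the $2(k+2)$ vertices of the light-like polygon $\partial_{\infty}\sigma(\C)$ are limits of the form $L_\gamma v_\gamma$, where $L_\gamma \in \SO_{0}(2,2)$ is the $t \to +\infty$ limit of the osculating map $G(\gamma(t))$ along a stable ray $\gamma$ in a standard half-plane. The standard half-plane decomposition associated to $q_n$ deforms continuously with the coefficients, so for each fixed canonical direction $\theta_0 = \arg(z_0)$ with $z_0 \in D$ the ray $t \mapsto e^{i\theta_0}t$ lies eventually in the corresponding half-plane for both $q_n$ and $q$. The crux is to show $L_{\gamma, n} \to L_\gamma$, which requires interchanging the limits $t \to +\infty$ and $n \to +\infty$. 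This is possible because the decay estimate in Proposition \ref{prop:vortex}(ii) is uniform on compact subsets of $\mathcal{TQ}_k$: there exist $\beta, K>0$ such that $\|G_n(\gamma(t))^{-1} G_n'(\gamma(t))\| \leq K\, t^{-1/2} e^{-\beta t}$ for all sufficiently large $n$ and $t$ along any stable direction. Combined with the $C^{\infty}_{\mathrm{loc}}$ convergence of $\omega_n$ from the first stage, the ODE arguments of \cite[Lemma B.1]{DW} yield $L_{\gamma, n} \to L_\gamma$. Since the markings $p_0^{(n)} = \pi_l(v_n)$ and $q_0^{(n)} = f_{\Delta_n}(p_0^{(n)})$ are determined by this same fixed $\theta_0$, they converge to the analogous data for $\tilde{\alpha}(q)$ in $\R\Pp^{1}$, establishing continuity.

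The main obstacle is precisely this interchange of limits in the second stage: rays escape to infinity while the horospherical frame $F_0$ grows exponentially, so $C^{\infty}_{\mathrm{loc}}$ convergence of $\omega_n$ to $\omega$ is by itself insufficient. Resolving this requires the uniform exponential decay of $u_n - \tfrac{1}{2}\log|q_n|$ along stable directions, and it is the continuous dependence of the implicit constants in Proposition \ref{prop:vortex} on the coefficients of $q$ that supplies the needed uniformity in a neighbourhood of $q$ in $\mathcal{TQ}_k$.
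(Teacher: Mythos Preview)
Your first stage is essentially the paper's argument: both of you establish $u_n \to u$ in $C^{\infty}_{\mathrm{loc}}$ from the uniform bounds of Proposition~\ref{prop:vortex}(i) plus elliptic theory and uniqueness, and then integrate to get $\sigma_n \to \sigma$ smoothly on compact sets.

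The second stage differs. The paper does \emph{not} revisit the asymptotic ODE analysis of Section~\ref{subsec:rays}. Instead, having obtained $\sigma_n \to \sigma$ on compacta, it invokes (implicitly) the structural fact from Proposition~\ref{prop:graph} that each $S_n$ is the graph of a $2$-Lipschitz map $h_n:D \to S^1$; compact convergence of the surfaces then forces $h_n \to h$ uniformly on $\overline{D}$ by equicontinuity, whence $\Delta_n \to \Delta$ in the Hausdorff sense and $P_n \to P$, $Q_n \to Q$ automatically. The marking is then handled in one line. Your route instead proves directly that each osculating limit $L_{\gamma,n}$ converges to $L_\gamma$, which requires the uniformity of the decay in Proposition~\ref{prop:vortex}(ii) over compact sets of coefficients and a careful interchange of limits via the ODE lemma. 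This is more work but has the virtue of making the convergence of the markings completely transparent, whereas the paper's terse sentence ``the limit points of the rays $\sigma_n(e^{i\theta_0}t)$ converge'' really does hide the same interchange-of-limits issue you flag. In short: the paper's approach is more economical because the $2$-Lipschitz graph property absorbs most of the boundary convergence for free; yours is heavier but more self-contained and more honest about where the analytic difficulty lies.
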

\begin{proof}Let $q_{n}$ be a sequence of monic and centered polynomial quadratic differentials converging to $q$. Let $\Delta_{n}$ be a light-like polygon representing $\alpha([q_{n}])$ and $\Delta$ be a light-like polygon representing $\alpha([q])$. Let $f_{\Delta_{n}}$ and $f_{\Delta}$ be the corresponding defining functions, with points of discontinuity $P_{n}$ and $P$ and images $Q_{n}$ and $Q$. Recall that the direction $\theta_{0}$ induces markings $p_{n}\in P$, $p_{0} \in P$, $q_{n}\in Q_{n}$ and $q_{0}\in Q$.\\
\indent We need to prove that the marked sets $(P_{n}, p_{n})$ and $(Q_{n}, q_{n})$ converge to $(P,p_{0})$ and $(Q,q_{0})$, respectively. We first claim that the maximal surfaces $S_{n}$ with embedding data $I_{n}=2e^{2u_{n}}|dz|^{2}$ and $II_{n}=\Re(2q_{n})$ converge to the maximal surface $S$ with embedding data $I=2e^{2u}|dz|^{2}$ and $II=\Re(2q)$, up to isometries. In fact, since $q_{n}$ is convergent, Proposition \ref{prop:vortex} and standard Schauder estimates give a uniform bound on the $C^{1,1}$-norm of the functions $u_{n}$ on compact sets, hence $u_{n}$ weakly converges to a weak solution of
\[
    2u_{z\bar{z}}=e^{2u}-e^{-2u}|q|^{2} \ .
\]
By elliptic regularity, the limit is a strong solution, and by uniqueness it must coincide with $u$. Therefore, $u_{n}$ converges to $u$ smoothly on compact sets. This implies that the embedding data of $S_{n}$ converges to the embedding data of $S$, thus $S_{n}$ converges to $S$ up to changing $S_{n}$ by global isometries of $AdS_{3}$.  \\
\indent We deduce that $\Delta_{n} \to \Delta$, and  that $P_{n} \to P$ and $Q_{n}\to Q$. Since $\sigma_{n}$ converges to $\sigma$ smoothly on compact sets, the limit points of the rays $\sigma_{n}(e^{i\theta_{0}}t)$ converge to the limit point of the ray $\sigma(e^{i\theta_{0}}t)$, hence $p_{n} \to p_{0}$ and $q_{n} \to q_{0}$. 
\end{proof}

\indent In order to prove the injectivity of the map $\tilde{\alpha}$, the following lemma is crucial:
\begin{lemma}\label{lm:unique_max} Let $\Delta \subset Ein^{1,1}$ be a light-like polygon. If there exists a maximal surface $S\subset AdS_{3}$ bounding $\Delta$, then it is unique. 
\end{lemma}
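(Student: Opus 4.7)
The plan is to establish uniqueness by a maximum principle argument applied to the graph representation of complete maximal surfaces in the $D \times S^{1}$ model of $\widehat{AdS}_{3}$. Let $S_{1}, S_{2} \subset AdS_{3}$ be two complete maximal surfaces, both bounding $\Delta$. Choose lifts $\widehat{S}_{i} \subset \widehat{AdS}_{3}$ that induce the same lift of $\Delta$ inside $\partial_{\infty}\widehat{AdS}_{3} \cong S^{1} \times S^{1}$. By Proposition \ref{prop:graph}, each $\widehat{S}_{i}$ is the graph of a $2$-Lipschitz map $f_{i}: D \to S^{1}$, whose continuous extension $\partial f_{i}: S^{1} \to S^{1}$ has graph in $S^{1} \times S^{1}$ equal to the chosen lift of $\Delta$. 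In particular $\partial f_{1} \equiv \partial f_{2}$ on $S^{1}$.

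Next, I would translate the maximality condition into a quasi-linear PDE $\mathcal{Q}(f, Df, D^{2}f) = 0$ on $D$, whose ellipticity is equivalent to the graph being strictly spacelike, i.e.\ $|Df|_{z} < \tfrac{2}{1+|z|^{2}}$ pointwise. Since $D$ is simply connected, I lift $f_{1}, f_{2}$ to maps $\tilde{f}_{i}: D \to \R$ and set $g := \tilde{f}_{1} - \tilde{f}_{2}$. The convex combinations $\tilde{f}_{t} := t\tilde{f}_{1} + (1-t)\tilde{f}_{2}$ inherit the spacelike Lipschitz bound, so integrating the linearisation of $\mathcal{Q}$ along the homotopy $\tilde{f}_{t}$ produces a homogeneous linear elliptic operator $L$ on $D$ with locally bounded coefficients and no zeroth-order term, for which $Lg = 0$ on $D$ and $g|_{\partial D} = 0$ continuously. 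Applying the strong maximum principle on an exhaustion $D_{\rho} = \{|z| \leq \rho\} \Subset D$ on which $L$ is uniformly elliptic, and using that $\sup_{\partial D_{\rho}} |g| \to 0$ as $\rho \to 1$ by the common boundary extension, forces $g \equiv 0$, hence $f_{1} = f_{2}$ and $S_{1} = S_{2}$.

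The main obstacle I foresee is the degeneration of the coefficients of $\mathcal{Q}$ at $|z| = 1$, where the ambient Lorentzian metric on $D \times S^{1}$ blows up, together with the lack of smoothness of the boundary data at the vertices of $\Delta$; both difficulties preclude applying a boundary maximum principle directly on $D$. The exhaustion step circumvents this, but relies crucially on the uniform $2$-Lipschitz control of the $f_{i}$ and on the continuous boundary agreement forced by the common polygon $\Delta$ and by Proposition \ref{prop:convexhull} (which confines both surfaces to $\mathcal{C}(\Delta)$, so their asymptotic behaviours coincide); the careful verification that $L$ is genuinely uniformly elliptic on each $D_{\rho}$ with a modulus independent of the homotopy parameter $t$ is the technical core of the argument.
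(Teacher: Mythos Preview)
Your argument is correct and follows a genuinely different route from the paper's own proof. The key points all check out: the $\theta'$-translation invariance of the metric on $D\times S^{1}$ guarantees that the maximal-graph equation has no explicit $f$-dependence, so the linearised operator $L$ has no zeroth-order term; the convex combinations $\tilde f_{t}$ remain strictly spacelike by the triangle inequality on gradients, giving uniform ellipticity of $L$ on each $D_{\rho}$; and the $2$-Lipschitz bound on each $f_{i}$ yields $|g(z)|\le 4(1-|z|)$ near $\partial D$, so $\sup_{\partial D_{\rho}}|g|\to 0$ and the interior maximum principle on the exhaustion forces $g\equiv 0$.

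The paper instead argues geometrically with the function $B(u,v)=\langle u,v\rangle$ on $\widehat S\times\widehat S'$, which encodes the time-like distance between the two surfaces. A maximum of $B$ strictly above $-1$ contradicts maximality by a second-variation computation (imported from \cite{BTT}); since the surfaces are non-compact the maximum need not be attained, so the paper runs an Omori--Yau style argument: pick a maximising sequence, recenter by ambient isometries, and pass to limiting maximal surfaces (using the compactness theory of \cite{bon_schl}) on which $B$ does attain its supremum. Your approach is more elementary and self-contained---it avoids the compactness machinery and the external input from \cite{BTT} and \cite{Yau}---but it leans essentially on the graphical structure over $D$ and the $S^{1}$-symmetry of the model, whereas the paper's method is intrinsic and transplants directly to the higher-signature situations treated in \cite{BTT}.
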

\begin{proof} We can use the same argument appeared in \cite{BTT} at infinity.\\
\indent Suppose, by contradiction, that there exists another maximal surface $S'$ with boundary at infinity $\Delta$. We choose $\widehat{S}$ and $\widehat{S'}$ their lifts to $\widehat{AdS}_{3}$ in such a way that they share the same boundary at infinity. As a consequence, the function
\begin{align*}
        B: \widehat{S} \times \widehat{S'} &\rightarrow \R \\
            (u,v) &\mapsto \langle u , v \rangle 
\end{align*}
is always non-positive (see \cite[Lemma 3.24]{BTT}). Notice that $B$ is related to the time-like distance in $\widehat{AdS}_{3}$. In fact:
\begin{itemize}
    \item $|B(u,v)|>1$ if and only if $u$ and $v$ are connected by a space-like geodesic;
    \item $|B(u,v)|=1$ if and only if $u$ and $v$ are connected by a light-like geodesic;
    \item $|B(u,v)|<1$ if and only if $u$ and $v$ are connected by a time-like geodesic.
\end{itemize}
Moreover, in the last case the time-like distance between $u$ and $v$ is 
\[
    d(u,v)=\arccos(-B(u,v)) \ .
\]
If $S$ and $S'$ are different, then there exists a pair of points $(u_{0},v_{0}) \in \widehat{S} \times \widehat{S'}$ such that $B(u_{0},v_{0})>-1$ (see \cite[Lemma 3.25]{BTT}). It was proved in \cite[Theorem 3.13]{BTT} that a point of maximum of $B$ gives a contradiction.\\
\indent In our context, however, we cannot assume in general that $B$ takes its maximum, but we can apply a similar argument at infinity. Given a point $u \in \widehat{S}$, we first notice that there exists a point $v_{u} \in \widehat{S'}$ that realises
\[
    \overline{B}=\sup \{B(u,v) \ | \ v \in \widehat{S'} \} \ :
\]
in fact, since $u$ lies in the domain of dependence of the boundary at infinity of $\widehat{S'}$, the light-cone centered at $u$ intersects $\widehat{S'}$ in a compact set, and the supremum of $B(u, \cdot)$ is achieved in this set. Now, let us fix $x_{0} \in AdS_{3}$ and $\nu \in T_{x_{0}}AdS_{3}$. If $u_{n} \in \widehat{S}$ is a sequence of points such that 
\[
    \lim_{n \to +\infty} B(u_{n}, v_{u_{n}})= \sup(B) \ 
\]    
and $\nu_{n}$ is the sequence of future-oriented unit normal vectors to $\widehat{S}$ at $u_{n}$, there exist isometries $g_{n} \in \Isom(\widehat{AdS}_{3})$ such that $g(u_{n})=x_{0}$ and $d_{u_{n}}g_{n}(\nu_{n})=\nu$. General properties about the behaviour of constant mean curvature surfaces in $AdS_{3}$ (see \cite[Lemma 5.1]{bon_schl} or \cite[Lemma 4.1]{TambuCMC}) imply that the sequence of maximal surfaces $\widehat{S}_{n}=g_{n}(\widehat{S})$ converges in a neighbourhood of $x_{0}$ to a maximal surface $\widehat{S}_{\infty}$. In addition, the maximal surfaces $\widehat{S}_{n}'=g_{n}(\widehat{S}_{n}')$ also converge to a limit $\widehat{S}_{\infty}'$ in a neighbourhood of its intersection with the future oriented normal time-like geodesic to $\widehat{S}_{\infty}$ at $x_{0}$, because the sequence $u_{n}$ converges to the supremum of the time-like distance between $\widehat{S}$ and $\widehat{S'}$, and we can choose $u_{n}$ so that the  norm of the differential at $u_{n}$ of the restriction to $\widehat{S}$ of the distance to $\widehat{S'}$ goes to zero as $n \to +\infty$ (\cite{Yau}). But now the function $B$ defined on $\widehat{S}_{\infty} \times \widehat{S'}_{\infty}$ achieves a maximum bigger than $-1$ and this gives a contradiction.
\end{proof}

\begin{prop}\label{prop:inj} The map $\tilde{\alpha}$ is injective.
\end{prop}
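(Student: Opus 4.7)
My plan is to assume $\tilde{\alpha}(q) = \tilde{\alpha}(q')$ for two monic centered polynomial quadratic differentials $q, q'$ of degree $k$ and to deduce $q = q'$. Let $\sigma, \sigma' \colon \C \to AdS_{3}$ be the associated maximal embeddings with light-like polygons $\Delta, \Delta'$. The hypothesis says that the marked tuples $((P, p_{0}), (Q, q_{0}))$ and $((P', p_{0}'), (Q', q_{0}'))$ agree in $\mathcal{TP}_{k+2} \times \mathcal{TP}_{k+2}$, so there exists $(g_{l}, g_{r}) \in \PSL(2,\R) \times \PSL(2,\R) \cong \SO_{0}(2,2)$ that carries the first pair of marked tuples to the second. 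Post-composing $\sigma'$ with the inverse of this isometry, I reduce to the case in which $\Delta = \Delta'$ as light-like polygons, including the distinguished vertex.

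At this point I invoke Lemma \ref{lm:unique_max}, which gives $S = \sigma(\C) = \sigma'(\C) = S'$ as subsets of $AdS_{3}$. Since both $\sigma$ and $\sigma'$ are conformal parametrizations of $S$ by $\C$ with respect to the conformal class of the complete induced metric $I = 2e^{2u}|dz|^{2}$, they differ by a biholomorphism $\phi \in \Aut(\C)$, and pulling back the intrinsic second fundamental form via each embedding yields $\phi^{*} q = q'$ as holomorphic quadratic differentials on $\C$.

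Because $q$ and $q'$ are both monic and centered, the computation in the proof of Proposition \ref{prop:moduli_poly} forces $\phi$ to be of the form $\phi(z) = \zeta^{j} z$ for some $j \in \{0, \dots, k+1\}$, where $\zeta$ is a primitive $(k+2)$-th root of unity. The preferred direction $\theta_{0}$ then pins down $j$: by construction the marking $p_{0}$ is the left projection of the vertex of $\Delta$ reached by the ray $\sigma(t e^{i\theta_{0}})$, whereas the analogous ray for $\sigma'$ is $\sigma(\zeta^{j} t e^{i\theta_{0}})$, which by the analysis in Section \ref{subsec:rays} limits to the vertex of $\Delta$ obtained from the first one by $j$ positions of the cyclic rotation on the polygon. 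The forced matching of markings therefore gives $j = 0$, so $\phi = \Id$ and $q = q'$.

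The decisive input is Lemma \ref{lm:unique_max}, which translates the polygon-level matching into the surface-level identification $S = S'$; once this is available, the remainder of the argument is a careful tracking of how the $\Z_{k+2}$ reparametrization ambiguity of monic centered polynomials interacts with the distinguished vertex determined by $\theta_{0}$, so I expect the proof to be short.
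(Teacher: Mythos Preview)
Your argument is correct and follows essentially the same route as the paper: the decisive input is Lemma \ref{lm:unique_max}, and the residual $\Z_{k+2}$-ambiguity is eliminated via the marking coming from the direction $\theta_{0}$. The only cosmetic difference is that the paper argues by contradiction and separates the case $q' = T_{*}^{j}q$ (handled by equivariance) from the case where $q$ and $q'$ lie in different $\Z_{k+2}$-orbits, whereas you treat both at once by directly tracking the distinguished vertex; your organization is arguably cleaner, since the injectivity of $\Gamma$ from Section \ref{subsec:rays} makes the pinning-down of $j=0$ transparent.
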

\begin{proof} Let $q,q' \in \mathcal{TQ}_{k}$ be different monic and centered quadratic differentials. If there exists $j \in \{1, \dots, k\}$ such that $q'=T_{*}^{j}q$, where $T(z)=\zeta_{k+2}z$ is a generator of the $\Z_{k+2}$-action, then  the equivariance of the map already implies that $\tilde{\alpha}(q) \neq \tilde{\alpha}(q')$. \\
Otherwise, suppose by contradiction that $\tilde{\alpha}(q)=\tilde{\alpha}(q')$. Then, we can choose maximal surfaces $S$ and $S'$ with second fundamental form $2\Re(q)$ and $2\Re(q')$ with the same boundary at infinity $\Delta$. By Lemma \ref{lm:unique_max}, $S$ and $S'$ must coincide, and, in particular have the same embedding data. Therefore, there exists a biholomorphism $T'$ of $\C$ such that $T'_{*}q'=q$, but this is impossible because $q$ and $q'$ do not lie in the same $\Z_{k+2}$-orbit and they are both monic and centered.
\end{proof}

\subsection{Properness} The proof of the properness of the map $\alpha$ follows the line of the proof of Theorem 3.5 in \cite{TambuCMC}. We first recall some preliminary results.\\
\\
\indent We identify $\widehat{AdS}_{3}$ with $\h^{2} \times S^{1}$ and we denote with $\widetilde{AdS}_{3}$ its Universal cover. In this setting, the formula for the mean curvature $H$ of a space-like surface $S$ that is the graph of a function $h:\h^{2} \rightarrow \R$ is well-known (\cite{Bartnik}):
\begin{equation}\label{eq:PDEmax}
    H=\frac{1}{2v_{S}}(\dive_{S}(\chi\grad_{S}h)+\dive_{S}T) \ ,
\end{equation}
where, denoting with $t$ the $\R$-coordinate, $\chi^{2}=-\| \frac{\partial}{\partial t}\|^{2}$, $T$ is the unitary time-like vector field that gives the time-like orientation and $v_{S}=-\langle \nu , T\rangle$, where $\nu$ is the future-directed unit normal vector field to $S$.\\
\\
\indent Given a point $p \in \widetilde{AdS}_{3}$, we denote with $I^{+}(p)$ the future of $p$ and with $I^{+}_{\epsilon}(p)$ the points in the future of $p$ at distance at least $\epsilon$. The key a-priori estimate is the following:
\begin{lemma}\cite[Lemma 4.13]{bon_schl}\label{lm:estimate} Let $p \in \widetilde{AdS}_{3}$ and $\epsilon>0$. Let $K$ be a compact domain contained in a region where the covering map $\widetilde{AdS}_{3}\rightarrow AdS_{3}$ is injective. There exists a constant $C=C(p, \epsilon, K)$ such that for every maximal surface $M$ that verifies
\begin{itemize}
    \item $\partial M \cap I^{+}(p)=\emptyset$;
    \item $M \cap I^{+}(p) \subset K$,
\end{itemize}
we have that
\[
    \sup_{M \cap I_{\epsilon}^{+}(p)}v_{M}\leq C \ .
\]
\end{lemma}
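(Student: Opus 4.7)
The plan is to prove this a-priori gradient estimate by the classical approach for maximal surfaces in Lorentzian manifolds, combining a Bochner-type identity for $v_M$ on $M$ with a carefully chosen barrier function adapted to the AdS geometry.

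First I would derive the PDE satisfied by $v_M$ on a maximal surface. Since $T = \partial/\partial t$ coming from the product decomposition $\widetilde{AdS}_3 \cong \h^2 \times \R$ is a timelike Killing vector field, and $M$ is maximal (so $\trace(B)=0$), a standard computation yields a Bochner-type identity of the shape
\[
    \Delta_M v_M = \bigl(|B|^{2} + \text{Ric}_N(\nu,\nu)\bigr)\, v_M ,
\]
where $B$ is the shape operator of $M$. Since $\widetilde{AdS}_3$ has constant sectional curvature $-1$, the Ricci term is uniformly bounded, and the sign of $|B|^2 \geq 0$ ensures that $v_M$ is a non-negative subsolution of a linear elliptic equation on $M$ with a controlled zero-order term.

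Next I would construct a barrier on $I^+(p)$ using the Lorentzian time-like distance $\rho = d(p,\cdot)$ from $p$, which is smooth on $I^+(p)$ and well-defined on the whole region under consideration because $K$ lies in an injectivity region of the covering $\widetilde{AdS}_3\to AdS_3$. The key feature is that $\rho$ vanishes on $\partial I^+(p)$ and is bounded above on $K$. I would then consider a composite test function of the form $\psi = \chi(\rho)\, v_M$, where $\chi$ is an increasing cut-off with $\chi(0)=0$ and $\chi(s)\geq c_\epsilon > 0$ for $s \geq \epsilon$, chosen so that its derivatives are controlled in terms of $p$, $\epsilon$, and $K$. The hypotheses $\partial M \cap I^+(p) = \emptyset$ and $M\cap I^+(p)\subset K$ guarantee that $\psi$ is compactly supported in the interior of $M \cap I^+(p)$ and therefore attains its maximum at an interior critical point. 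Applying the maximum principle to $\psi$ (using the Bochner identity for $v_M$, the Hessian comparison for $\rho$ in an AdS background, and absorbing the gradient cross-terms via Cauchy–Schwarz) produces a pointwise bound on $v_M$ at this maximum, hence a uniform bound on $M\cap I^+_\epsilon(p)$ depending only on $p$, $\epsilon$, and $K$.

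The main obstacle is managing the coupling between $v_M$ and $|B|^2$ in the Bochner identity, since a priori $|B|^2$ can be large and contribute positively on the right-hand side. The standard way around this is to iterate: one first uses the identity to get an integral (or $L^p$) bound on $v_M$ using the barrier, and then runs a Moser-type iteration to upgrade to an $L^\infty$ bound, exploiting that the $|B|^2 v_M$ term is still manageable because its integral against test functions is controlled by the Gauss equation $|B|^2 = -2(K_I+1)$ and area bounds coming from the containment $M\cap I^+(p)\subset K$. The dependence of $C$ on $p$, $\epsilon$, $K$ enters exclusively through the barrier $\rho$ and the cutoff $\chi$, while the dependence on the intrinsic geometry of $M$ is eliminated by the maximality condition together with the compactness of $K$.
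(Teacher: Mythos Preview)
The paper does not prove this lemma; it is stated with a direct citation to \cite[Lemma~4.13]{bon_schl} and no argument is given. Your outline is essentially the Bartnik gradient-estimate method, which is indeed the approach taken in the cited Bonsante--Schlenker paper (itself adapted from Bartnik's argument for asymptotically flat spacetimes). So in that sense your proposal matches the source.

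Two remarks on the details. First, the Bochner identity you write for $v_M$ is not quite the one that appears in practice: the clean formula $\Delta_M v_M = (|B|^2 + \overline{\mathrm{Ric}}(\nu,\nu))v_M$ requires that $T$ have constant length, whereas here $\chi^{2}=-\|T\|^{2}$ varies over $\h^{2}$, so additional gradient terms in $\chi$ appear. These are bounded on $K$ and therefore harmless, but they must be tracked. Second, the Moser iteration you sketch at the end is not how the argument actually runs and is not needed: Bartnik's method, and Bonsante--Schlenker's adaptation, handles the $|B|^{2}v_M$ term directly inside the maximum-principle computation for the test function $\chi(\rho)\,v_M$. At an interior maximum of this product the first-order condition relates $\nabla v_M$ to $\nabla\rho$, and substituting back into the Laplacian inequality yields a purely algebraic bound on $v_M$ at that point, with constants depending only on $p$, $\epsilon$, and $K$. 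No integral estimates or iteration are involved.
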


\begin{oss}The same conclusion holds if we consider the past $I^{-}(p)$ of a point $p \in \widetilde{AdS}_{3}$. In fact, being maximal does not depend on the choice of the time orientation, and $v_{S}$ is invariant under the change of time-orientation.
\end{oss}

\begin{prop}\label{prop:proper}The map $\alpha$ is proper.
\end{prop}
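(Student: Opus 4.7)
The plan is to take a sequence $[q_n] \in \mathcal{MQ}_k$ whose images $[\Delta_n] = \alpha([q_n])$ converge to some $[\Delta] \in \mathcal{MLP}_{2(k+2)}$, and extract a convergent subsequence of $[q_n]$ in $\mathcal{MQ}_k$. Pick monic centered representatives $q_n$ with associated maximal embeddings $\sigma_n : \C \rightarrow AdS_3$, and apply isometries of $AdS_3$ so that the boundaries $\Delta_n = \partial_{\infty}\sigma_n(\C)$ converge as light-like polygons to a representative $\Delta$ of the limit class. The goal is then to produce a limit maximal surface $S_\infty$ with $\partial_\infty S_\infty = \Delta$; uniqueness (Lemma \ref{lm:unique_max}) together with the injectivity of $\tilde\alpha$ (Proposition \ref{prop:inj}) will then identify the limit of $[q_n]$.

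First I would obtain local compactness of the surfaces $S_n = \sigma_n(\C)$. Fix a point $p$ in the interior of the convex hull $\mathcal{C}(\Delta)$; for $n$ large, $p$ lies in $\mathcal{C}(\Delta_n)$, and by Proposition \ref{prop:convexhull} the surfaces $S_n$ are contained in these convex hulls, which are uniformly controlled thanks to the convergence $\Delta_n \to \Delta$. Lift to the universal cover $\widetilde{AdS}_3$ and apply Lemma \ref{lm:estimate} to suitably chosen points in the past and future of $p$: this yields uniform bounds on $v_{S_n}$ on compact subsets. Translating these bounds into uniform gradient bounds for the graphing functions and feeding them into the maximal surface equation (\ref{eq:PDEmax}) produces uniform $C^{k}$-bounds on compact sets (standard elliptic regularity for this quasilinear equation, as in the argument behind Theorem 3.5 of \cite{TambuCMC}). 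A diagonal extraction yields a $C^\infty_{\mathrm{loc}}$-convergent subsequence whose limit $S_\infty$ is a space-like maximal surface.

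Next I would identify the boundary at infinity and the conformal type of $S_\infty$. Since the $S_n$ are contained in $\mathcal{C}(\Delta_n)$ and $\Delta_n \to \Delta$ in the Hausdorff topology, any boundary accumulation point of $S_\infty$ lies on $\Delta$; conversely, each vertex and each open light-like edge of $\Delta$ is an accumulation point of $S_\infty$ because the corresponding edges/vertices of $\Delta_n$ come from the description in Section \ref{subsec:rays} and the smooth convergence on compacta preserves the asymptotic geometry in each standard half-plane. Therefore $\partial_\infty S_\infty = \Delta$. In particular $S_\infty$ meets its convex hull in the same way as in Proposition \ref{prop:convexhull}, its induced metric is complete (its lift to $\widehat{AdS}_3$ being a $2$-Lipschitz graph over $D$ by Proposition \ref{prop:graph}), and since $\Delta$ has $2(k+2)$ vertices, its holomorphic quadratic differential $q_\infty$ determining the second fundamental form must be a polynomial of degree exactly $k$ on the complex plane: indeed the conformal type is $\C$ (a complete simply connected surface accumulating on more than one boundary point has no room to be the disc, by standard parabolicity arguments), and the count of light-like edges in Section \ref{subsec:rays} identifies $\deg q_\infty = k$.

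Finally, to conclude in terms of $\mathcal{MQ}_k$, let $q_\infty$ (made monic and centered on $\C$, its uniformising coordinate) be the quadratic differential of $S_\infty$. The smooth convergence $\sigma_n \to \sigma_\infty$ (after a suitable choice of biholomorphisms $A_n$ of $\C$ aligning the uniformising coordinates of the convergent subsequence) gives $(A_n)_* q_n \to q_\infty$ uniformly on compacta, and by Proposition \ref{prop:poly_top} this means $[q_n] \to [q_\infty]$ in $\mathcal{MQ}_k$. The main obstacle in this plan is the identification of the conformal type of $S_\infty$ as $\C$ and the preservation of the degree of the polynomial under the limit: if the degree could drop, one would lose vertices of $\Delta$, but this is ruled out by the vertex count in the limit polygon combined with the description of limits along stable and unstable directions in Section \ref{subsec:rays}.
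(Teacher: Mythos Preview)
Your overall strategy is the same as the paper's: use Lemma \ref{lm:estimate} together with elliptic regularity for the graph equation (\ref{eq:PDEmax}) to extract a $C^{2}$-limit maximal surface $S_{\infty}$ with $\partial_{\infty}S_{\infty}=\Delta$, and then pin down the degree of the limiting quadratic differential via the vertex count and Proposition \ref{prop:poly_top}. The covering argument with the sets $I^{\pm}_{\epsilon(\tilde p)}(\tilde p)$ and the convergence $\mathcal C(\Delta_n)\to\mathcal C(\Delta)$ that the paper writes out is exactly the ``uniform bounds on $v_{S_n}$ on compact subsets'' that you sketch.

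There is one genuine error. Your justification that $S_{\infty}$ is conformally $\C$ --- ``a complete simply connected surface accumulating on more than one boundary point has no room to be the disc, by standard parabolicity arguments'' --- is false as stated: a totally geodesic space-like plane in $AdS_{3}$ is complete, simply connected, has an entire circle in $Ein^{1,1}$ as its boundary at infinity, and is conformally the disc. So parabolicity cannot be read off from boundary behaviour alone. The paper does not try to identify the conformal type of $S_{\infty}$ directly. Instead it argues on the side of the differentials: after reparametrising by affine maps $A_{n}\in\Aut(\C)$ (so that the conformal embeddings $\sigma_{n}\circ A_{n}$ are normalised at a fixed basepoint of $S_{\infty}$), the $(A_{n})_{*}q_{n}$ are still degree-$k$ polynomials and converge uniformly on compact sets of $\C$; Cauchy estimates then force the limit $q$ to be a polynomial of degree at most $k$, the unique vortex solution for $q$ (Proposition \ref{prop:vortex}) makes the limiting parametrisation complete, hence surjective onto $S_{\infty}$, and the edge count of $\Delta$ forces $\deg q=k$. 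This is the route you should take in place of the parabolicity claim.

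A smaller point: your argument for $\partial_{\infty}S_{\infty}=\Delta$ via the asymptotics of Section \ref{subsec:rays} is more than is needed and risks circularity, since at that stage you do not yet know $S_{\infty}$ is a polynomial maximal surface. The paper simply uses that a complete space-like surface has a topological circle as boundary (the corollary to Proposition \ref{prop:graph}), and that this circle lies in the Hausdorff limit of the $\Delta_{n}$, hence equals $\Delta$.
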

\begin{proof} We first prove that if $\Delta_{n}$ is a sequence of light-like polygons converging to $\Delta$ in the Hausdorff topology and $S_{n}$ are maximal surfaces with boundary at infinity $\Delta_{n}$, then $S_{n}$ converges $C^{2}$ on compact sets to a maximal surface $S$ with boundary at infinity $\Delta$. \\
\indent Let $\widetilde{\mathcal{C}(\Delta)}$ be a lift of the convex hull of $\Delta$ to the Universal cover. For any point $\tilde{p} \in \widetilde{\mathcal{D}(\Delta)} \cap I^{+}(\partial^{-}\widetilde{\mathcal{C}(\Delta)})$, we choose $\epsilon(\tilde{p})>0$ so that the family 
\[
    \{I^{+}_{\epsilon(\tilde{p})}(\tilde{p}) \cap \widetilde{\mathcal{C}(\Delta)}\} \cup \{ I^{-}_{\epsilon(\tilde{p})}(\tilde{p}) \cap \widetilde{\mathcal{C}(\Delta)}\}
\]
is an open covering of $\widetilde{\mathcal{C}(\Delta)}$. Since $\Delta_{n}$ converges to $\Delta$ in the Hausdorff topology, the convex hull of $\Delta_{n}$ converges to the convex hull of $\Delta$, thus we can find $n_{0}$ such that the above family of open sets provide an open covering of
\[
    K=\overline{\bigcup_{n \geq n_{0}} \widetilde{\mathcal{C}(\Delta_{n})}} \ .
\]
Given a number $R>0$, we denote with $B_{R}$ the ball of radius $R$ in $\h^{2}$ centered at the origin in the Poincar\'e model. The intersection $(B_{R} \times \R) \cap K$ is compact, so there is a finite number of points $\tilde{p}_{1}, \dots, \tilde{p}_{m}$ such that
\[
    (B_{R}\times \R)\cap K \subset \bigcup_{j=1}^{m}I_{\epsilon(\tilde{p}_{j})}^{+}(\tilde{p}_{j}) \cup \bigcup_{j=1}^{m}I^{-}_{\epsilon(\tilde{p}_{j})}(\tilde{p}_{j}) \ .
\]
We notice that, since $\tilde{p}_{j} \in \widetilde{\mathcal{D}(\Delta)}$, the intersections $I^{\pm}(\tilde{p}_{j}) \cap \widetilde{\mathcal{D}(\Delta)}$ are compact. Moreover, since the plane dual to $\tilde{p}_{j}$ is disjoint from $\Delta$ for every $j=1, \dots, m$, if we choose $n_{0}$ big enough, the same is true for $\Delta_{n}$ for every $n \geq n_{0}$, because $\Delta_{n}$ converges to $\Delta$ in the Hausdorff topology. In this way, we can ensure that the sets $K_{j}^{+}=\overline{I^{+}(\tilde{p}_{j})}\cap K$ and $K_{j}^{-}=\overline{I^{-}(\tilde{p}_{j})}\cap K$ are compact and contained in a region where the covering map $\pi: \widetilde{AdS}_{3} \rightarrow AdS_{3}$ is injective. By Lemma \ref{lm:estimate}, there are constants $C_{j}^{\pm}$ such that
\[
    \sup_{M\cap I^{\pm}_{\epsilon(\tilde{p}_{j})}(\tilde{p}_{j})}v_{M} \leq C_{j}^{\pm}
\]
for every maximal surface $M$ satisfying:
\begin{enumerate}
    \item [i)] $\partial M \cap I^{\pm}(\tilde{p}_{j})=\emptyset$;
    \item [ii)] $M \cap I^{\pm}(\tilde{p}_{j})$ is contained in $K_{j}^{\pm}$.
\end{enumerate}
We can apply this to our sequence of maximal surfaces $\widetilde{S}_{n}$ for $n \geq n_{0}$, and we obtain that 
\[
    \sup_{\widetilde{S}_{n}\cap(B_{R}\times \R)}v_{\widetilde{S}_{n}} \leq \max\{C_{1}^{\pm}, \dots, C_{m}^{\pm}\}
\]
for every $n \geq n_{0}$. We deduce that for every $R>0$, there exists a constant $C(R)$ such that $v_{\widetilde{S}_{n}}$ is bounded by $C(R)$ for $n$ sufficiently large. If $\widetilde{S}_{n}$ are graphs of the functions $h_{n}:\h^{2} \rightarrow \R$, comparing the previous estimate with Equation (\ref{eq:PDEmax}), we see that the restriction of $h_{n}$ on $B_{R}$ is the solution of a uniformly elliptic quasi-linear PDE with bounded coefficients. Since $h_{n}$ and the gradient of $h_{n}$ are uniformly bounded (Proposition \ref{prop:graph}), by elliptic regularity, the norms of $h_{n}$ in $C^{2,\alpha}(B_{R-1})$ are uniformly bounded. We can thus extract a subsequence $h_{n_{k}}$ that converges $C^{2}$ to some function $h$ on compact sets. Since $h$ is the $C^{2}$ limit of solutions of Equation (\ref{eq:PDEmax}), it is still a solution and its graph $\widetilde{S}$ is a maximal surface. When projecting back to $AdS_{3}$, the boundary at infinity of $S$ coincides with $\Delta$ because it is the Hausdorff limit of the curves $\Delta_{n}$, which converge to $\Delta$ by construction. Notice that the embedding data of $S_{n}$ converge on compact sets to the embedding data of $S$.\\
\indent We can now conclude that the map $\alpha$ in proper. Let $\Delta_{n}=\alpha([q_{n}])$ be convergent to a light-like polygon $\Delta$ in $\mathcal{MLP}_{2(k+2)}$. Then, up to acting with elements of $\dPSL$, we can assume that $\Delta_{n}$ converges to $\Delta$ in the Hausdorff topology. By the previous discussion, the sequence $q_{n}$ must converge uniformly on compact sets to a holomorphic quadratic differential $q$ up to biholomorphisms of $\C$, because their real part determines the second fundamental form of the maximal surfaces bounding $\Delta_{n}$. Since all $q_{n}$ are polynomial of fixed degree $k$, $q$ is necessarily a polynomial of degree at most $k$. But, the degree of the polynomial determines the number of edges of the light-like polygon at infinity, hence the degree is exactly $k$ and the proof is complete using Proposition \ref{prop:poly_top}.
\end{proof}

\begin{proof}[Proof of Theorem \ref{thm:C}] By Proposition \ref{prop:cont} and Proposition \ref{prop:inj}, the map $\tilde{\alpha}$ is continuous and injective. By the Domain Invariance Theorem, $\tilde{\alpha}$ is open. Since it is equivariant, its projection $\alpha$ is open, as well. Since a proper map between locally compact topological spaces is closed, the image of $\alpha$ is a connected component of $\mathcal{MLP}_{2(k+2)}$. Since the latter is connected, $\alpha$ is surjective, hence a homeomorphism.
\end{proof}

\begin{proof}[Proof of Theorem \ref{thm:B}] Since the map $\alpha$ is surjective, every light-like polygon is the boundary at infinity of a space-like maximal surface. Uniqueness follows from Proposition \ref{lm:unique_max}.
\end{proof}

\section{Application}\label{sec:application}
Let $\Omega_{l}, \Omega_{r} \subset \h^{2}$ be open domains of the hyperbolic plane. An orientation preserving diffeomorphism $m:\Omega_{l}\rightarrow \Omega_{r}$ is minimal Lagrangian if its graph is a minimal surface in $\h^{2} \times \h^{2}$ that is Lagrangian for the symplectic form $\omega_{\h^{2}}\oplus -\omega_{\h^{2}}$.\\
\\
\indent Minimal Lagrangian maps have been extensively studied when $\Omega_{r}=\Omega_{l}=\h^{2}$. For instance, if we require $m$ to be equivariant under the action of two Fuchsian representations $\rho_{r},\rho_{l}:\pi_{1}(\Sigma)\rightarrow \PSL(2,\R)$ of the fundamental group of a closed, oriented and connected surface of genus at least two, a result by Schoen (\cite{Schoenharmonic}) states that such an $m$ always exists and is unique. Later, Bonsante and Schlenker (\cite{bon_schl}) used anti-de Sitter geometry to construct minimal Lagrangian maps from $\h^{2}$ to $\h^{2}$ and extended Schoen result in the following sense: given a quasi-symmetric homeomorphism of the circle, there exists a unique minimal Lagrangian extension to the hyperbolic plane. Properties of these maps have been then studied by Seppi (\cite{seppimaximal}). \\
\\
\indent Here we use the techniques introduced by Bonsante and Schlenker in order to construct a particular class of minimal Lagrangian maps between ideal polygons in $\h^{2}$. Let us first recall how their construction works. Let $S$ be a maximal surface in anti-de Sitter space. We denote with $q$ the holomorphic quadratic differential such that $II=2\Re(q)$. The Gauss map
\[
    G: S \rightarrow \h^{2} \times \h^{2}
\]
is harmonic, and the two projections are also harmonic 
\[
    \Pi_{r}, \Pi_{l}:S \rightarrow \h^{2}  
\]
with opposite Hopf differentials $\pm 2iq$ (e.g. \cite[Prop. 6.3]{TambuCMC}). If we assume that $S$ has principal curvatures in $(-1,1)$, and that these maps are diffeomorphisms from $S$ to open domains $\Omega_{r,l}$ of the hyperbolic plane, then the composition
\[
    \Pi_{r} \circ \Pi_{l}^{-1}:\Omega_{l} \rightarrow \Omega_{r}
\]
is minimal Lagrangian. We remind that if the principal curvatures are in $(-1,1)$, the left and right Gauss maps are always orientation preserving local diffeomorphisms, but global injectivity may fail. The conformal class of the induced metric on the maximal surface is called the centre of the minimal Lagrangian map. By the aforementioned result of Bonsante and Schlenker, every (equivariant) minimal Lagrangian map from $\h^{2}$ to $\h^{2}$ can be obtained by this procedure and the centre is a hyperbolic surface. \\  
\\
\indent As a consequence of our study about polynomial maximal surfaces in anti-de Sitter space, we can construct a particular class of minimal Lagrangian maps between ideal polygons: 

\begin{defi}Let $P,Q$ be ideal polygons in $\h^{2}$ with the same number of vertices. We say that a minimal Lagrangian map $m:P \rightarrow Q$ factors through the complex plane if there exist two complete harmonic diffeomorphisms $f:\C \rightarrow P$ and $f':\C \rightarrow Q$ with opposite Hopf differentials such that $m=f'\circ f^{-1}$.
\end{defi}

\begin{oss} We remark that the condition on the number of vertices is necessary for the existence of a minimal Lagrangian map, as it preserves the volume. 
\end{oss}

\begin{oss}We recall that a harmonic diffeomorphism $f:\C \rightarrow \h^{2}$ is said to be complete if the metric $\|\partial f\|^{2}|dz|^{2}$ is complete. This is a technical assumption in order to use the results of \cite{polygons_hyp}. We do not know if there exist harmonic diffeomorphisms from the complex plane to an ideal polygon whose Hopf differential is not a polynomial.
\end{oss} 

Let us now show that the construction of Bonsante and Schlenker applied to a polynomial maximal surface $S$ in $AdS_{3}$ induces a minimal Lagrangian map between ideal polygons. We first remark that by Proposition \ref{prop:princ_curv} the principal curvatures of $S$ are in $(-1,1)$. Moreover, the left- and right- Gauss maps are harmonic with polynomial Hopf differentials. Under this condition, the metrics $\|\partial \Pi_{r}\|^{2}|dz|^{2}$ and $\|\partial \Pi_{l}\|^{2}|dz|^{2}$ are complete and the harmonic maps are diffeomorphisms onto their image (\cite[Theorem 5.1]{QL_vortex}). Now, complete harmonic maps with polynomial Hopf differentials of degree $k$ send diffeomorphically the complex plane to an ideal polygon with $k+2$ vertices (\cite{polygons_hyp}). These polygons are related to the boundary at infinity of $S$ in a precise way:

\begin{prop}\label{prop:max_poly}Let $\Delta$ be the boundary at infinity of a polynomial maximal surface $S$ in $AdS_{3}$. Let $f_{\Delta}:\R\Pp^{1} \rightarrow \R\Pp^{1}$ be the corresponding function. If $P$ and $Q$ are the sets of points of discontinuity and in the image of $f_{\Delta}$, then $S$ induces a minimal Lagrangian map between the ideal polygons with vertices $P$ and $Q$.
\end{prop}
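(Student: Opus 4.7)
The content left to prove is only the identification of the ideal vertex sets of $\Pi_l(\C)$ and $\Pi_r(\C)$ with $P$ and $Q$; the discussion preceding the statement already establishes that these are complete harmonic diffeomorphisms onto ideal polygons with $k+2$ vertices and that $m = \Pi_r \circ \Pi_l^{-1}$ is minimal Lagrangian between them. My plan is therefore to match the boundary data of the two Gauss-map factors against the left and right projections at infinity of $AdS_3$.

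The central step will be to show that $\Pi_l$ and $\Pi_r$ extend continuously to $\partial_\infty S = \Delta \subset Ein^{1,1}$, and that, after identifying $\partial_\infty \h^2$ with $\R\Pp^1 \cong \partial_\infty P_0$, these boundary extensions are exactly the Segre projections $\pi_l$ and $\pi_r$. One route is to invoke the general principle in \cite{bon_schl} that the two Gauss-map factors of a complete space-like surface in $AdS_3$ extend at infinity to the two projections of the foliations of $Ein^{1,1}$. A more hands-on alternative, well adapted to the polynomial setting, is to feed the asymptotic analysis of Section \ref{subsec:rays} directly into the definition of the Gauss map: along a stable quasi-ray in a standard $q$-half-plane, the osculating factorisation $F = G F_0$ provides convergence of $G$ to a fixed element of $\SO_0(2,2)$, and one reads off from the explicit horospherical data in Tables \ref{table:1} and \ref{table:2} that the two $\h^2$-valued projections of the frame converge to the two Segre projections of the limiting vertex of $\Delta$.

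Granting this boundary identification, the matching becomes combinatorial. By \cite{polygons_hyp} the $k+2$ ideal vertices of $\Pi_r(\C)$ are the accumulation values of $\Pi_r$ along rays escaping to infinity in each of the $k+2$ standard $q$-half-planes; by the boundary extension, each such accumulation value equals $\pi_r(v)$ for the vertex $v \in \Delta$ that the corresponding quasi-ray approaches in $AdS_3$. Two consecutive vertices of $\Delta$ that share a right-edge collapse to the same point under $\pi_r$, so the $2(k+2)$ vertices of $\Delta$ produce exactly $k+2$ distinct right-projections, which by definition are the elements of $Q$. A symmetric argument, interchanging left- and right-edges, gives $P$ as the vertex set of $\Pi_l(\C)$, and $m$ is then a minimal Lagrangian map between the ideal polygon with vertex set $P$ and the one with vertex set $Q$. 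The main obstacle is clearly the boundary extension of the Gauss map and its Segre identification with $\pi_l, \pi_r$; once that is secured, the combinatorial matching is immediate from the ray analysis already carried out in Section \ref{subsec:rays}.
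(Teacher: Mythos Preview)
Your proposal follows essentially the same route as the paper: reduce the identification of the ideal vertices of $\Pi_l(\C)$ and $\Pi_r(\C)$ to the asymptotic horospherical surfaces via the osculating map, and then verify the claim on the standard horospherical surface. The paper makes exactly this reduction (noting in addition the equivariance under $(A,B)\in\dPSL$, so that the standard horospherical case suffices).

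There is, however, one concrete step you have elided. You assert that the limits of the two $\h^2$-valued projections can be ``read off from the explicit horospherical data in Tables~\ref{table:1} and~\ref{table:2}''. But those tables record only the projective limits of the \emph{embedding} $f_0$ along rays, not the limits of the Gauss-map factors $\Pi_l, \Pi_r$. The Gauss map involves the dual point $G(p)$ of the tangent plane, and its relation to $\pi_l,\pi_r$ at infinity is not contained in the frame limits alone. The paper closes this gap by passing to the $\PSL(2,\R)$ model, writing $\Pi_l(p)=p\cdot G(p)^{-1}$ explicitly for $S_0$, and checking by direct computation that along the rays limiting to a left-edge $e_l$ of $\Delta$ one has $\Pi_l\to \pi_l(e_l)$. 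Your first alternative (invoking a general boundary-extension principle from \cite{bon_schl}) is plausible in spirit but that reference treats acausal quasi-circles, and the light-like boundary here is degenerate; some care would be needed to justify the extension directly. The horospherical computation is the cleanest way to finish, and it is the one missing ingredient in your sketch.
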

\begin{oss}Recall that in the definition of $f_{\Delta}$ we have fixed a totally geodesic space-like plane $P_{0}$ in $AdS_{3}$. Also the definitions of the projections $\Pi_{r, l}$ depend on the choice of a totally geodesic space-like plane. The above result holds if these choices are compatible, i.e. we use the same space-like plane $P_{0}$. 
\end{oss}
\begin{proof} We do the proof for the left Gauss map $\Pi_{l}:S \rightarrow \h^{2}$, the other case being analogous. We recall that the set $P$ is obtained by projecting the edges of $\Delta$ that lie in the left-foliation to the boundary at infinity of $P_{0}$ via $\pi_{l}$. Since we already know that the image of $\Pi_{l}$ is an ideal polygon with a precise number of vertices, it is sufficient to prove that if $p_{n} \in S$ converges to $p_{\infty} \in e_{l}$, where $e_{l}$ is an edge of $\Delta$ belonging to the left foliation, then $\Pi_{l}(p_{n})$ converges to $\pi_{l}(e_{l})$. Now, a polynomial maximal surface is asymptotic to horospherical surfaces at infinity, hence it is enough to prove this for horospherical surfaces. Moreover, it is sufficient to prove it for the standard horospherical surface $S_{0}$ described in Section \ref{subsec:horo}, because all others are obtained from $S_{0}$ by acting with an isometry $(A,B)\in \dPSL$ and the projections $\pi_{l}$ and $\Pi_{l}$ both change by post-composition with $A$. \\
\indent The proof now boils down to a standard computation in anti-de Sitter geometry. For this purpose we use the identification between $AdS_{3}$ and $\PSL(2, \R)$ given by
\[
    (x_{0},x_{1},x_{2},x_{3}) \mapsto \begin{pmatrix} x_{3}-x_{1} & x_{0}-x_{2}\\
                                                      x_{0}+x_{2} & x_{3}+x_{1} 
                                       \end{pmatrix} .
\]
Morover, we choose as $P_{0}$ the space-like plane
\[
    P_{0}=\{A \in \PSL(2,\R) \ | \ \trace(A)=0\} \ .
\]
In this model the horospherical surface $S_{0}$ is parameterised by 
\[
    f_{0}(x,y)=\frac{1}{\sqrt{2}}\begin{pmatrix} e^{-2y} & -e^{-2x}\\
                                                 e^{2x} & e^{2y} 
                                 \end{pmatrix} \ .
\]
The left Gauss map can be computed as follows (\cite{folKsurfaces}): let $p \in S_{0}$ and $T_{p}S_{0}$ be the totally geodesic plane tangent to $S$ at $p$. Let $G(p)$ be the point dual to $T_{p}S_{0}$. Then the isometry $(\Id, G(p))$ sends $T_{p}S$ to $P_{0}$ and the left Gauss map is given by
\[
    \Pi_{l}(p)=p\cdot G(p)^{-1}
\]
where we are thinking of $p$ as an element of $\PSL(2,\R)$ itself. In our setting
\[
    G(x,y)=\frac{1}{2}\begin{pmatrix} -e^{-2y} & -e^{-2x} \\
                                        e^{2x} & -e^{2y}
                      \end{pmatrix} 
\]
and the left Gauss map is
\[
    \Pi_{l}(x,y)=f_{0}(x,y)\cdot G(x,y)^{-1}=\begin{pmatrix} 0 & e^{-2x-2y} \\
                                                  -e^{2x+2y} & 0 
                                  \end{pmatrix} \ .
\]
The edges of the light-like polygon at infinity belonging to the left-foliations are (see Table \ref{table:1})
\[
    e_{1}=\begin{pmatrix} 0 & 0 \\
                          1 & s
          \end{pmatrix}  \ \ \ \ \ \ \ \text{and} \ \ \ \ \ \ \  e_{2}=\begin{pmatrix} s & -1 \\
                                                                                        0 & 0
                                                                        \end{pmatrix} \ ,
\]
where those matrices are to be intended up to scalar multiples. The left-projection is then performed by taking their intersection with the boundary at infinity of $P_{0}$ of the projective lines they generate, i.e.
\[
    \pi_{l}(e_{1})=\begin{pmatrix} 0 & 0 \\
                                    1 & 0 
                    \end{pmatrix} \ \ \ \ \ \ \ \text{and} \ \ \ \ \ \ \ \pi_{l}(e_{2})=\begin{pmatrix} 0 & 1 \\
                                                                                                        0 & 0
                                                                                         \end{pmatrix} \ .
\]
Now, the edge $e_{1}$ is obtained as projective limit along the rays $\gamma_{1}(t)=e^{i\pi/4}t+iz$ for $z \in \R$, and the above computations show that
\[
    \lim_{t \to +\infty}\Pi_{l}(f_{0}(\gamma_{1}(t))=\begin{pmatrix} 0 & 0 \\
                                                            1 & 0 
                                            \end{pmatrix}=\pi_{l}(e_{1}) \ .
\]
The computation for the edge $e_{2}$ is analogous. 
\end{proof}

\begin{prop}\label{prop:unique}Let $m:P \rightarrow Q$ be a minimal Lagrangian map between ideal polygons with $k\geq 3$ vertices that factors through the complex plane. Then $m$ is induced by a polynomial maximal surface in anti-de Sitter space.
\end{prop}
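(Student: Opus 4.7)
The plan is to reverse the Bonsante--Schlenker construction: build a polynomial maximal surface in $AdS_{3}$ whose left and right Gauss maps recover $f$ and $f'$, so that $m = f' \circ f^{-1}$ is induced by this surface.

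First, I identify the Hopf differential. Let $q$ be the Hopf differential of $f$, so that $f'$ has Hopf differential $-q$. Since $f, f':\C \to \h^{2}$ are complete harmonic diffeomorphisms onto ideal polygons with $k$ vertices, the Han--Tam--Treibergs classification of complete harmonic diffeomorphisms from $\C$ to ideal polygons (\cite{polygons_hyp}) forces $q$ to be a polynomial quadratic differential of degree $k-2$. Now I apply Proposition \ref{prop:existence} to the polynomial $\tilde{q}=q/(2i)$: there is a polynomial maximal surface $S \subset AdS_{3}$ with second fundamental form $2\Re(\tilde{q})$, unique up to isometries of $AdS_{3}$. By the Bonsante--Schlenker recollection preceding the statement, the left and right Gauss maps $\Pi_{l}, \Pi_{r}: S \to \h^{2}$ are then complete harmonic diffeomorphisms with Hopf differentials $\pm 2i\tilde{q} = \pm q$; and by Proposition \ref{prop:max_poly} their images are ideal polygons with $k$ vertices.

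The key step is to match $(f,f')$ with $(\Pi_{l}, \Pi_{r})$ via uniqueness. Both $f$ and $\Pi_{l}$ are complete harmonic diffeomorphisms from $\C$ to $\h^{2}$ with the same polynomial Hopf differential $q$. Their energy densities $\|\partial f\|^{2}|dz|^{2}=2e^{2u}|dz|^{2}$ and $\|\partial \Pi_{l}\|^{2}|dz|^{2}=2e^{2u'}|dz|^{2}$ satisfy (after a standard normalisation) the vortex equation \eqref{eq:PDE}, which by Proposition \ref{prop:vortex} admits a unique solution. Hence $u=u'$, and once the induced metric and the Hopf differential coincide, the harmonic map is determined up to post-composition by an orientation-preserving isometry of $\h^{2}$: indeed, the data $(u,q)$ assemble into a flat $\mathfrak{sl}(2,\R)$-valued connection on $\C$ whose integration produces the harmonic map up to initial condition, exactly as in the frame-field argument preceding Proposition \ref{prop:existence}. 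Thus $\Pi_{l}=A\circ f$ and $\Pi_{r}=B\circ f'$ for some $A,B \in \PSL(2,\R)$.

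To conclude, let $(A^{-1}, B^{-1}) \in \PSL(2,\R)\times \PSL(2,\R) \cong \SO_{0}(2,2)$ act on $AdS_{3}$. The image $S'=(A^{-1},B^{-1})\cdot S$ is still a polynomial maximal surface (the condition is intrinsic) and its Gauss maps are precisely $f$ and $f'$; therefore $m = f' \circ f^{-1}$ is induced by $S'$, as required. The main obstacle is the uniqueness claim in the matching step: one must verify that the PDE \eqref{eq:PDE} satisfied by the energy density of a complete harmonic diffeomorphism $\C \to \h^{2}$ coincides (after normalisation) with the vortex equation of Proposition \ref{prop:vortex}, and that the induced-metric-plus-Hopf-differential data uniquely determines the map up to target isometry. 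This is the step where the completeness hypothesis on the metrics $\|\partial f\|^{2}|dz|^{2}$ and $\|\partial f'\|^{2}|dz|^{2}$ is essential, since without it the vortex equation can admit additional solutions with different asymptotic behaviour at infinity.
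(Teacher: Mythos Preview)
Your proof is correct and follows the same route as the paper's: identify the Hopf differential as polynomial via \cite{polygons_hyp}, build the associated polynomial maximal surface via Proposition~\ref{prop:existence}, and then match its left and right Gauss maps to $f,f'$ up to a $\PSL(2,\R)\times\PSL(2,\R)$ isometry. The only difference is that the paper asserts in one line that a complete harmonic diffeomorphism $\C\to\h^{2}$ with prescribed polynomial Hopf differential is unique up to post-composition by $\PSL(2,\R)$, whereas you spell this out via the uniqueness in Proposition~\ref{prop:vortex} and frame-field integration.
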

\begin{proof} By assumption the map $m$ can be written as $m=f'\circ f^{-1}$, where $f:\C \rightarrow P$ and $f':\C \rightarrow Q$ are complete harmonic maps with opposite Hopf differentials $\pm 2iq$. By \cite[Theorem 1.1]{polygons_hyp}, the Hopf differential must be a polynomial of degree $k-2$. Therefore, associated to $q$ is a polynomial maximal surface $S$, conformally equivalent to $\C$, in anti-de Sitter space. The left and right projections of the Gauss map of $S$ are harmonic with Hopf differentials $\pm 2iq$, hence they must coincide with $f$ and $f'$ (up to post-composition with elements of $\PSL(2,\R))$. Therefore, up to changing $S$ by a global isometry of $AdS_{3}$, the left and right Gauss map coincide with $f$ and $f'$, and $m$ is induced by $S$.
\end{proof}

\begin{proof}[Proof of Theorem \ref{thm:A}] Given two ideal polygons $P$ and $Q$ with $k\geq 3$ vertices, there are at most $k$ distinct light-like polygons in the Einstein Universe $\Delta$ whose defining functions $f_{\Delta}$ have points of discontinuity $P$ and image $Q$: in fact those are the elements in the fibre of the map
\[
    \mathcal{MLP}_{2k}\cong (\mathcal{TP}_{k} \times \mathcal{TP}_{k})/\Z_{k} \rightarrow \mathcal{TP}_{k}/\Z_{k}\times \mathcal{TP}_{k}/\Z_{k}
\]
associating to a pair of labelled ideal polygons their equivalence classes in the moduli space of ideal polygons. By Theorem \ref{thm:B}, each light-like polygon bounds a polynomail maximal surface that induces a minimal Lagrangian map between $P$ and $Q$ (Proposition \ref{prop:max_poly}). By Proposition \ref{prop:unique}, every such minimal Lagrangian map comes from a polynomial maximal surface in anti-de Sitter space, and the theorem follows.
\end{proof}

\bibliographystyle{alpha}
\bibliography{bs-bibliography}

\bigskip

\noindent \footnotesize \textsc{DEPARTMENT OF MATHEMATICS, RICE UNIVERSITY}\\
\emph{E-mail address:}  \verb|andrea.tamburelli@uni.lu|

\end{document}